\newcommand{\ncom}{\newcommand}
\ncom{\ul}{\underline}
\ncom{\ol}{\overline}
\ncom{\bq}{\begin{equation}}
\ncom{\eq}{\end{equation}}
\ncom{\beqn}{\begin{eqnarray*}}
\ncom{\eeqn}{\end{eqnarray*}}
\ncom{\beq}{\begin{eqnarray}}
\ncom{\eeq}{\end{eqnarray}}
\ncom{\nno}{\nonumber}
\ncom{\rar}{\rightarrow}
\ncom{\Rar}{\Rightarrow}
\ncom{\noin}{\noindent}
\ncom{\bc}{\begin{centre}}
\ncom{\ec}{\end{centre}}
\ncom{\sz}{\scriptsize}
\ncom{\rf}{\ref}
\ncom{\sgm}{\sigma}
\ncom{\Sgm}{\Sigma}
\ncom{\dt}{\delta}
\ncom{\Dt}{Delta}
\ncom{\s}{\underline{s}}
\ncom{\lmd}{\lambda}
\ncom{\Lmd}{\Lambda}
\ncom{\eps}{\epsilon}
\ncom{\pcc}{\stackrel{P}{>}}
\ncom{\dist}{{\rm\,dist}}
\ncom{\sspan}{{\rm\,span}}
\ncom{\re}{{\rm Re\,}}
\ncom{\im}{{\rm Im\,}}
\ncom{\sgn}{{\rm sgn\,}}
\ncom{\ba}{\begin{array}}
\ncom{\ea}{\end{array}}
\ncom{\eop}{\hfill{{\rule{2.5mm}{2.5mm}}}}
\ncom{\eoe}{\hfill{{\rule{1.5mm}{1.5mm}}}}
\ncom{\eof}{\hfill{{\rule{1.5mm}{1.5mm}}}}
\ncom{\hone}{\mbox{\hspace{1em}}}
\ncom{\htwo}{\mbox{\hspace{2em}}}
\ncom{\hthree}{\mbox{\hspace{3em}}}
\ncom{\hfour}{\mbox{\hspace{4em}}}
\ncom{\hsev}{\mbox{\hspace{7em}}}
\ncom{\vone}{\vskip 2ex}
\ncom{\cH}{{\mathcal H}}
\ncom{\vtwo}{\vskip 4ex}
\ncom{\vonee}{\vskip 1.5ex}
\ncom{\vthree}{\vskip 6ex}
\ncom{\vfour}{\vspace*{8ex}}
\ncom{\norm}{\|\;\;\|}
\ncom{\integ}[4]{\int_{#1}^{#2}\,{#3}\,d{#4}}
\ncom{\inp}[2]{\langle{#1},\,{#2} \rangle}
\ncom{\Inp}[2]{\left\langle{#1},\,{#2} \right\rangle}
\ncom{\vspan}[1]{{{\rm\,span}\#1 \}}}
\ncom{\dm}[1]{\displaystyle {#1}}
\ncom{\Hom}{\operatorname{Hom}}
\ncom{\Hol}{\operatorname{Hol}}
\ncom{\Ps}{\mathcal P_{\underline{s}}}
\ncom{\hl}{\mathcal H}
\ncom{\defin} {\overset {\text {\rm def} }{=}}
\newtheorem{theorem}{\bf Theorem}[section]
\newtheorem{example}[theorem]{\bf Example}
\newtheorem{proposition}[theorem]{\bf Proposition}
\newtheorem{corollary}[theorem]{\bf Corollary}
\newtheorem{lemma}[theorem]{\bf Lemma}
\newtheorem{remark}[theorem]{\bf Remark}
\newtheorem{definition}[theorem]{\bf Definition}
\def \N{\mathbb{N}}
\def \s{\underline{s}}
\renewcommand{\epsilon}{\varepsilon}
\renewcommand{\kappa}{\varkappa}
\begin{document}

\title[Cartan isometry]{Cartan Isometries and Toeplitz Operators on Cartan domains}

\author[S. Kumar ]{Surjit Kumar}
\address[S. Kumar]{Department of Mathematics, Indian Institute of Technology Madras, Chennai 600036, India} 
\email{surjit@iitm.ac.in}
\author[M. K. Mal]{Milan Kumar Mal}
\address[M. K. Mal]{Department of Mathematics, Indian Institute of Technology Madras, Chennai 600036, India}
 \email{ma21d018@smail.iitm.ac.in; milanmal1702@gmail.com }
\author[P. Pramanick]{Paramita Pramanick}
\address[P. Pramanick]{Stastistics and Mathematics Unit, Indian Statistical Institute Kolkata, Kolkata 700108, India}
\email{paramitapramanick@gmail.com}

\thanks{Support for the work of S. Kumar was provided in the form of MATRICS grant (Ref No. MTR/2022/000457) of Anusandhan National Research Foundation (ANRF). 
Support for the work of M. K. Mal was provided in the form of a Prime Minister's Research Fellowship (PMRF / 2502827). 
Support for the work of P. Pramanick was provided by the Department of Science and Technology (DST) in the form of the Inspire Faculty Fellowship (Ref No. DST/INSPIRE/04/2023/001530).}

\subjclass[2020]{Primary 47A13, 47B32, 47B35, Secondary 32M15}
\keywords{Cartan domain, Cartan isometry, Hardy space, Toeplitz operator}

\date{}

\begin{abstract}
We provide a description of the Shilov boundary of the classical Cartan domain in terms of Jordan triple determinant. As a consequence, we obtained an intrinsic characterization of Cartan isometries. Further, we obtain (i) invariance of Cartan isometries under the action of the biholomorphic automorphism group, and (ii) a Brown-Halmos type condition for Toeplitz operators on the Cartan domain. Also, we show that the zero operator is the only compact Toeplitz operator. Finally, we study the $\boldsymbol T$-Toeplitz operators and reflexivity of a Cartan isometry $\boldsymbol T.$ 
\end{abstract}

\renewcommand{\thefootnote}{}

\maketitle

\section{Introduction}
Let $\Omega$ be an irreducible bounded symmetric domain in $\mathbb C^d$ of type $(r, a, b)$ in its Harish-Chandra realization. Let $G$ be the connected component of the identity in $\rm{Aut}(\Omega)$, the biholomorphic automorphism group of $\Omega$. Let $\mathbb{K}=\{g \in G : g(0)=0\}$ be a maximal compact subgroup of $G$.
By \cite[Proposition 2, pp. 67]{RN},  $\mathbb K$ is a subgroup of linear automorphism in $G.$  Thus, the action of $\mathbb{K}$ on $\Omega$ extends naturally to $\mathbb C^d$ given by
 \[k\cdot z:=\big(k_1(z), \ldots, k_d(z)\big),\; \; k\in \mathbb{K} \mbox{~and~} z\in \mathbb C^d.\]
The complete classification of irreducible bounded symmetric domains is given by \'E. Cartan \cite{Ca}. There are six types of irreducible bounded symmetric domains up to biholomorphic equivalence, the first four of which are called the {\it classical Cartan domains}. The other two types of domains are known as {\it exceptional domains}.
The numerical invariants $(r, a, b)$ determine the domain $\Omega $ uniquely up to biholomorphic equivalence. The dimension $d$ is given by the relation $d=r+\frac{a}{2}r(r-1)+rb.$ We refer to \cite{Loos, Arazy} for more details of bounded symmetric domains. 

Every irreducible bounded symmetric domain $\Omega$ of rank $r$ can be realized as an open unit ball of a {\it Cartan factor} $Z \approx \mathbb C^d$. There exists a frame $e_1, \ldots, e_r $ of pairwise orthogonal minimal tripotents, known as a Jordan frame, such that for each $z \in \mathbb C^d $ has a {\it polar decomposition} \[z= k \cdot \sum_{j=1}^r t_j e_j, \,\, t_1 \geq \ldots \geq t_r \geq 0, \, k \in \mathbb{K}.\] 
The numbers $t_1,\ldots, t_r$ are singular numbers of $z$ determined uniquely, but $k$ need not be. Furthermore, $z$ belongs to $\Omega$, $\partial\Omega$ or $\Omega^c $ if and only if $t_1 < 1, t_1 =1,$ or $t_1\geq 1$, respectively.
The domain $\Omega$ is the open unit ball $\{ z \in \mathbb C^d: \|z\| <1\},$ where $\|z\|$ is the largest singular number of $z$. The {\it Shilov boundary} $S_\Omega$ of $\Omega$ is given by $\{ z \in \mathbb C^d: z= k \cdot e, k \in \mathbb K\}$, where $e=e_1+\cdots+e_r$ is a maximal tripotent. There exists a unique $\mathbb K$ invariant probability measure supported on $S_{\Omega},$ which we denote by $dz.$  The Hardy space $H^2(S_{\Omega})$ is the space of all holomorphic functions $f:\Omega \to \mathbb C$ such that
\[\|f\|^2:=\sup_{0<t<1} \int_{S_{\Omega}} |f(t z)|^2 dz < \infty.\]

Throughout the paper, $\Omega$ will denote the classical Cartan domain. Let ${\mathbb N}$ denote the set of all nonnegative integers. For a set $X$ and positive integer $d$,  $X^d$ stands for the $d$-fold Cartesian product of $X$.
 Let $\mathcal{B}({\mathcal H})$ denote the unital Banach algebra of bounded linear operators on a complex separable Hilbert space $\mathcal H.$ A $d$-tuple of commuting bounded linear operators $T_1,\ldots, T_d$ defined on $\mathcal{H}$ is denoted by $\boldsymbol{T}=(T_1,\ldots, T_d)$. The notations $\sigma(\boldsymbol T)$, $\sigma_p(\boldsymbol T)$ and $\sigma_{\pi}(\boldsymbol T)$ are reserved for the Taylor joint spectrum, the joint point spectrum, and the joint approximate point spectrum of $\boldsymbol T$, respectively.

The group $\mathbb K$ acts naturally on the space of analytic polynomials  $\mathcal P(Z)$ on $Z$ by composition, that is, $(k\cdot p)(z)=p(k^{-1} \cdot z)$, $k\in \mathbb K,~ p\in \mathcal P(Z).$ 
Under this action, $\mathcal P(Z)$ decomposes into irreducible, mutually $\mathbb{K}$-inequivalent subspaces $\mathcal P_{\s}$ such that \[ \mathcal{P}(Z)=\sum_{\s} \mathcal{P}_{\s },\] where $\s=(s_1, \ldots , s_r)\in {\mathbb N}^r, s_1 \geq \ldots \geq s_r\geq 0$ is known as {\it signature}. The set of all signatures is denoted by $\vec{\mathbb N}^r.$ The above decomposition of $\mathcal{P}(Z)$ is called the {\it Peter-Weyl decomposition} in \cite[Section 3]{Hu} (see also \cite[page 21]{Arazy}). The reproducing kernel $K_{\s}$ of the space $\mathcal{P}_{\s }$ with respect to the Fischer-Fock inner product
\[\langle p,q\rangle_{F}:=\frac{1}{\pi^d}\int_{\mathbb{C}^d}p(z)\overline{q(z)}e^{-| z|^2}dm(z)\]
is $\mathbb K$-invariant, where $dm(z)$ is the Lebesgue measure. For $\s \in \vec{\mathbb N}^r,$ let $\{\psi_\alpha^{\s}(z)\}_{\alpha=1}^{d_{\s}}$ denote an orthonormal basis of $\mathcal P_{\s}$  with respect to the Fischer-Fock inner product, where $d_{\s}$ is the dimension of $\mathcal P_{\s}.$ The reproducing kernel $K_{\s}$ is given by
\beq \label{ortho} K_{\s}(z,w)= \sum_{\alpha=1}^{d_{\s}} \psi_\alpha^{\s}(z) \overline{\psi_\alpha^{\s}(w)}. \eeq

The Bergman space over $\Omega$ consists of square integrable holomorphic functions on $\Omega$ with respect to the Lebesgue measure $dm(z)$. It is known that the Bergman space is a reproducing kernel Hilbert space determined by the (Bergman) kernel $B(z,w)=\Delta (z,w)^{-p},$ where $p=2+a(r-1)+b,$ is called the {\it genus} of the domain $\Omega$ (see \cite[Theorem 2.9.8 ]{upmeier}). 
Here $\Delta(z,w)$ is the {\it Jordan triple determinant}, a $\mathbb K$-invariant sesqui-analytic polynomial on $\Omega \times \Omega$ uniquely determined by the property $\Delta(z,z)=\prod_{i=1}^r(1-t^2_i).$  
The Jordan triple determinant has the following decomposition \[\Delta(z,w)=\sum_{\ell =0}^r (-1)^{\ell} \Delta^{(\ell)}(z,w),\;\; z,w \in \Omega,\] where $(\ell):= (1,\ldots,1,0,\ldots,0) \in \Vec{\mathbb{N}^r}$ denotes signature with first $\ell$ many ones. The sesqui-analytic polynomials $\Delta^{(\ell)}(z,w)$ are homogeneous of bi-degree $(\ell,\ell)$ given by
\beqn \Delta^{(\ell)} (z,w)= (-1)^{\ell} (-1)_{(\ell)} K_{(\ell)}(z,w),\eeqn
where  $(x)_{\s}$ denotes the {\it generalized Pochhammer symbol}
\[(x)_{\s}:= \prod_{j=1}^{r} \left( x-\frac{a}{2}(j-1)\right)_{s_{j}}= \prod_{j=1}^{r}\prod_{l=1}^{s_{j}} \left(x-\frac{a}{2}(j-1)+l-1\right).\]

The sesqui-analytic function $K^{(\nu)}:\Omega \times \Omega \to \mathbb C$ defined by
\beq \label{FK formula} K^{(\nu)} (z, w):= \Delta(z,w)^{-\nu}= \sum_{\s} (\nu)_{\s}K_{\s}( z, w),\;\; z, w \in \Omega,\eeq is non negative definite, whenever $\nu  \in \left \{0, \ldots, \frac{a}{2}(r-1)\right \}\cup \left( \frac{a}{2}(r-1), \infty\right)$ (see \cite[Corollary 5.2]{FK} ).
The second equality in the equation \eqref{FK formula} is the Faraut-Kor\'anyi formula \cite[Theorem 3.8]{FK}. The non negative definite kernel $K^{(\nu)}$ gives rise to a reproducing kernel Hilbert space $\mathcal H^{(\nu)}(\Omega)$ of holomorphic functions on $\Omega,$ known as the {\it weighted Bergman space}. 
The Hardy space $H^2(S_{\Omega})$ on $S_\Omega$ corresponds to the parameter $\nu=\tfrac{d}{r}.$ 

 A commuting $d$-tuple $\boldsymbol{T}$ defined on $\mathcal H$ is said to be {\it subnormal} if there exists a commuting $d$-tuple $\boldsymbol N=(N_1,\ldots, N_d)$ of normal operators defined on a Hilbert space $\mathcal K \supseteq \mathcal H$ such that $N_i\mathcal H \subseteq \mathcal H, \; N_i|_{\mathcal H}=T_i$ for all $i=1,\ldots,d.$ The commuting normal extension $\boldsymbol N$ of subnormal tuple $\boldsymbol T$ is {\it minimal} if $\mathcal K= \bigvee \{\boldsymbol{N}^{* \alpha}h: h \in \mathcal{H}, \alpha \in \mathbb{N}^d\}$, which is unique up to unitary equivalence.
 
In \cite{At}, the notion of Cartan isometry is introduced and is defined as follows:
\begin{definition}\label{defn}
A commuting $d$-tuple $\boldsymbol{T}$ is said to be a {\it Cartan isometry} if it is subnormal and the Taylor joint spectrum $\sigma(\boldsymbol{N})$ of its minimal normal extension $\boldsymbol{N}$ is contained in the Shilov boundary $S_\Omega$ of $\Omega$.    
\end{definition}

When $\Omega$ is the open Euclidean unit ball $\mathbb B_d$, by \cite[Proposition 2]{At2}, Cartan isometry is simply a spherical isometry. A well-known example of a Cartan isometry is the Szeg\"o shift, the $d$-tuple $\boldsymbol T_{z}=(T_{z_1},\ldots,T_{z_d})$ of operators of multiplication by the coordinate functions $z_1,\ldots,z_d$ on the Hardy space $H^2(S_{\Omega}).$
 Athavale provided a characterization of Cartan isometry addressing each classical Cartan domain individually in \cite{At}.

In Section 2, we give another characterization of a Cartan isometry as follows: A commuting $d$-tuple $\boldsymbol{T}$ is a Cartan isometry if and only if  $\binom{r}{\ell} I_{\mathcal{H}} - \Delta^{(\ell)}(z,w)(\boldsymbol{T},\boldsymbol{T}^*)=0$ for all $1\leq \ell \leq r.$ 
This characterization, expressed in terms of Jordan triple determinant, is consistent for all classical Cartan domains. Here, the key idea is to recognize that the Shilov boundary $S_{\Omega}$ of the domain $\Omega$ is given by  
 $\left\{z \in \mathbb C^d: \Delta^{(\ell)}(z,z)= \binom{r}{\ell} \text{ for all } 1\leq \ell \leq r\right \}$ (see Lemma \ref{Shilov}).
 A similar description may be obtained for other boundary components of the domain $\Omega$. In the Appendix section, we provide such a description of type-I domain (see Theorem \ref{other-boundary}). However, in the present work, we restrict ourselves to the Shilov boundary, and thus, we will discuss only Cartan isometries.
We show that the Taylor joint spectrum $\sigma(\boldsymbol T)$ of a Cartan isometry $\boldsymbol T$ is contained in $\bar{\Omega}.$ Furthermore, we conclude that for any $\phi \in \rm {Aut}(\Omega),$ $\phi(\boldsymbol T)$ is a Cartan isometry whenever $\boldsymbol T$ is a Cartan isometry.

Recall that a bounded linear operator on $H^2(\mathbb D)$ defined as $T_\phi f:= P_{H^2(\mathbb{D})}(\phi f)$ is called a {\it Toeplitz operator}, where $\phi \in L^\infty $, $f \in H^2(\mathbb D)$ and $P_{H^2(\mathbb D)}$ denotes the orthogonal projection of $L^2$ onto $H^2(\mathbb D)$. The study of Toeplitz operators on function spaces has a rich history, beginning with the pioneering work of Brown and Halmos in \cite{Brown-Halmos}. Since then, extensive explorations have been made in the same direction for the domains such as polydisc, open unit ball, symmetrized bidisc, and Hartogs triangle; for instance, see \cite{Ding-Sun-Zheng, Ding, MSS, DJ, Tiret, Subham-Par, Ro-Go}. 
A study of Toeplitz operators on bounded symmetric domains appeared in \cite{upmeier1}. A structure theory of $C^*$-algebra of Toeplitz operators with continuous symbol defined on $S_{\Omega}$ was developed in \cite{upmeier2}. For more details, we refer to \cite{upmeier}.

In Section 3, we provide a Brown-Halmos type characterization of Toeplitz operators on the Hardy space $H^2(S_{\Omega})$ (see Theorem \ref{toeplitz}).  Also, we show that the zero operator is the only compact Toeplitz operator on $H^2(S_{\Omega}).$ Furthermore, we define a notion of $\boldsymbol T$-Toeplitz operators for a Cartan isometry $\boldsymbol T,$ and illustrate that this notion of $\boldsymbol T$-Toeplitz operators is consistent with the definition of $\boldsymbol T$-Toeplitz operator for an $A$-isometry defined in \cite{DEE}. Analogous to spherical isometry, we show that Cartan isometry is reflexive. We conclude this section with a description of dual Toeplitz operators on the Cartan domain.  

\section{A Characterization of Cartan Isometry}

The Jordan triple determinant associated with the irreducible bounded symmetric domain $\Omega$ has the following decomposition 
\[\Delta(z,w)=\sum_{\ell =0}^r (-1)^{\ell} \Delta^{(\ell)}(z,w),\;\; z,w \in \Omega.\] 
Here, the sesqui-analytic polynomial $\Delta^{(\ell)}(z,w),$ for each $\ell=1,\ldots, r,$ is homogeneous of bi-degree $(\ell,\ell)$  given by
\beq \label{delk} \Delta^{(\ell)} (z,w) \overset {\eqref{FK formula}}= (-1)^{\ell} (-1)_{(\ell)} K_{(\ell)}(z,w) \overset {\eqref{ortho}}= \prod_{j=1}^{\ell} (1+\frac{a}{2}(j-1))\sum_\alpha \psi_\alpha^{(\ell)}(z) \overline{\psi_\alpha^{(\ell)}(w)} .\eeq
For $\ell=1$, $(\ell)=(1,0,\ldots,0)$ and $\Delta^{(1)}(z,w)=\inp{z}{w}$ (see Section 3 of \cite{Up}).

In this section, we will provide an intrinsic characterization of Cartan isometry. We begin with the following lemma that describes the Shilov boundary of the domain $\Omega.$ 
Although it may be known to experts, for the sake of completeness, we include a proof. 
\begin{lemma} \label{Shilov}
    The Shilov boundary $S_{\Omega}$ of $\Omega$  is given by \[ \left\{z \in \mathbb C^d: \Delta^{(\ell)}(z,z)= \binom{r}{\ell} \text{ for all } 1\leq \ell \leq r\right \}.\] 
\end{lemma}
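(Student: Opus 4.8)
The plan is to characterize the Shilov boundary via the polar decomposition $z = k \cdot \sum_{j=1}^r t_j e_j$ and to reduce the defining equations $\Delta^{(\ell)}(z,z) = \binom{r}{\ell}$ to the single condition $t_1 = \cdots = t_r = 1$, which by the stated properties of the polar decomposition is exactly the condition that $z$ lies in the Shilov boundary $S_\Omega = \{z : z = k \cdot e, \; k \in \mathbb K\}$ with $e = e_1 + \cdots + e_r$. First I would exploit the $\mathbb K$-invariance of each $\Delta^{(\ell)}$: since $\Delta^{(\ell)}(z,w)$ is $\mathbb K$-invariant and $z = k \cdot \sum_j t_j e_j$, we have $\Delta^{(\ell)}(z,z) = \Delta^{(\ell)}\!\big(\sum_j t_j e_j, \sum_j t_j e_j\big)$, so it suffices to evaluate the $\Delta^{(\ell)}$ on diagonal tripotent combinations $\sum_j t_j e_j$. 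Because the $e_j$ form a Jordan frame of pairwise orthogonal minimal tripotents, I expect each $\Delta^{(\ell)}$ to restrict to the elementary symmetric polynomial in the squared singular numbers, namely $\Delta^{(\ell)}\big(\sum_j t_j e_j, \sum_j t_j e_j\big) = e_\ell(t_1^2, \ldots, t_r^2) = \sum_{1 \le j_1 < \cdots < j_\ell \le r} t_{j_1}^2 \cdots t_{j_\ell}^2$.

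The cleanest way to justify this identification is through the already-quoted factorization $\Delta(z,z) = \prod_{i=1}^r (1 - t_i^2)$ together with the decomposition $\Delta(z,w) = \sum_{\ell=0}^r (-1)^\ell \Delta^{(\ell)}(z,w)$. Setting $x_i = t_i^2$ and expanding $\prod_{i=1}^r (1 - x_i) = \sum_{\ell=0}^r (-1)^\ell e_\ell(x_1, \ldots, x_r)$, one matches the two expansions of $\Delta(z,z)$ term by term in total degree. Since $\Delta^{(\ell)}$ is homogeneous of bi-degree $(\ell, \ell)$, its restriction to $\sum_j t_j e_j$ is a degree-$\ell$ symmetric polynomial in the $t_i^2$; matching homogeneous components then forces $\Delta^{(\ell)}\big(\sum_j t_j e_j, \sum_j t_j e_j\big) = e_\ell(t_1^2, \ldots, t_r^2)$ for each $\ell$. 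I would state this matching as the central computational step.

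With this in hand, the two inclusions follow directly. If $z \in S_\Omega$, then $t_1 = \cdots = t_r = 1$, so $e_\ell(1, \ldots, 1) = \binom{r}{\ell}$, giving $\Delta^{(\ell)}(z,z) = \binom{r}{\ell}$ for all $1 \le \ell \le r$. Conversely, suppose $\Delta^{(\ell)}(z,z) = \binom{r}{\ell}$ for all $\ell$; then the elementary symmetric functions of $t_1^2, \ldots, t_r^2$ agree with those of $1, \ldots, 1$, so the two multisets of roots of the polynomial $\prod_i (X - t_i^2)$ and $(X-1)^r$ coincide, forcing $t_i^2 = 1$ and hence $t_i = 1$ for every $i$ (using $t_i \ge 0$). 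By the polar-decomposition criterion this places $z$ in $S_\Omega$. I would phrase this last deduction as an appeal to the fact that the elementary symmetric polynomials determine an unordered tuple of reals uniquely.

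The main obstacle, and the step requiring the most care, is rigorously establishing the identity $\Delta^{(\ell)}\big(\sum_j t_j e_j, \sum_j t_j e_j\big) = e_\ell(t_1^2, \ldots, t_r^2)$. While the homogeneity of $\Delta^{(\ell)}$ and the factorization of $\Delta(z,z)$ make the symmetric-function matching plausible, one must be sure that the bi-degree-$(\ell,\ell)$ component of $\Delta(z,z)$, evaluated on the diagonal frame, is genuinely the degree-$\ell$ elementary symmetric polynomial and not merely a polynomial of the correct degree; this relies on the orthogonality of the Jordan frame and the normalization $\Delta^{(1)}(z,w) = \inp{z}{w}$, which pins down the $\ell=1$ case as $\sum_j t_j^2$ and anchors the inductive bookkeeping. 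Once the $\ell=1$ normalization and the global factorization are combined, the degree-matching argument closes the gap without further structural input.
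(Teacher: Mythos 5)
Your proposal is correct and takes essentially the same route as the paper: reduce via the polar decomposition and $\mathbb K$-invariance to the identity $\Delta^{(\ell)}\big(\sum_j t_j e_j, \sum_j t_j e_j\big)=\sum_{j_1<\cdots<j_\ell}t_{j_1}^2\cdots t_{j_\ell}^2$, obtained by matching homogeneous components in the two expansions of $\Delta(z,z)=\prod_{i=1}^r(1-t_i^2)$, and then solve the resulting system of symmetric-function equations. The only (harmless) difference is in the converse direction: the paper first notes $\Delta(z,z)=0$ to force some $t_j=1$ and then inducts on the rank, whereas you appeal to the fact that the elementary symmetric functions determine the multiset $\{t_1^2,\ldots,t_r^2\}$, i.e.\ $\prod_i(X-t_i^2)=(X-1)^r$; both are valid.
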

\begin{proof}
   
For any $z\in \mathbb C^d$, consider the polar decomposition $z= k\cdot \sum_{j=1}^r t_j e_j, \;t_1 \geq t_2\geq \ldots \geq t_r \geq 0$, for some $k \in \mathbb{K}$. Note that any $z \in S_{\Omega}$ is of the form $z=k\cdot e$ for some $k \in \mathbb K.$ Thus, it suffices to show that $\Delta^{(\ell)}(z,z)=\binom{r}{\ell}$ for all $1 \leq \ell \leq r$ if and only if $t_j=1$ for each $j=1,\ldots,r$ in the polar decomposition of $z.$
For any $\s \in \vec{\mathbb N}^r,$ we get
\[K_{\s}(z,z)=K_{\s}\left( k\cdot \sum_{j=1}^r t_j e_j, k\cdot \sum_{j=1}^r t_j e_j\right)=K_{\s}\left(\sum_{j=1}^r t_j e_j,\sum_{j=1}^r t_j e_j\right)=K_{\s}\left(\sum_{j=1}^r t_j^2 e_j, e\right),\] where the last equality follows from \cite[Lemma 3.2]{FK}. 
By \eqref{delk}, we have
\beqn \Delta(z,z)=\sum_{\ell=0}^r(-1)^{\ell} \Delta^{(\ell)}(\sum_{j=1}^r t_j^2 e_j, e)=\prod_{j=1}^r(1-t_j^2) =\sum_{\ell=0}^r (-1)^{\ell}\sum_{1 \leq j_1 <\ldots < j_{\ell}\leq r} t_{j_1}^2 \cdots t_{j_{\ell}}^2.\eeqn 
Thus, we get
\[\Delta^{(\ell)}(z,z)=\Delta^{(\ell)}(\sum_{j=1}^r t_j^2 e_j, e)=\sum_{1 \leq j_1 <\ldots < j_{\ell}\leq r} t_{j_1}^2 \cdots t_{j_{\ell}}^2, \;\; 1 \leq \ell \leq r.\]
 
 It now follows that $\Delta^{(\ell)}(z,z)= \binom{r}{\ell}$ for each $\ell=1,\ldots,r$ if and only if 
\beq \label{qwerty} \sum_{1 \leq j_1  <\ldots < j_{\ell}\leq r} t_{j_1}^2  \cdots t_{j_{\ell}}^2 =\binom{r}{\ell},\; \; 1\leq \ell \leq r. \eeq  
If  $\Delta^{(\ell)}(z,z)=\binom{r}{\ell}$, then $\Delta(z,z)= \prod_{j=1}^r (1-t_j^2)=0.$ Hence, there is at least one $j \in \{1, \ldots, r\}$ such that $t_j=1.$ Without loss of generality, let $t_1=1.$ Then the equation \eqref{qwerty} reduces to 
\[\sum_{2 \leq j_1  <\ldots < j_{\ell}\leq r} t_{j_1}^2  \cdots t_{j_{\ell}}^2 =\binom{r-1}{\ell},\; \; 1\leq \ell \leq r-1.\] Thus inductively, we get $t_j=1$ for all $1 \leq j \leq r$. This completes the proof. \end{proof}

Consider $D=\Omega_1 \times \cdots \times \Omega_k \subseteq \mathbb C^{d_1}\times \cdots \times \mathbb C^{d_k} \approx \mathbb C^{d_1+\cdots+d_k}$ to be a standard Cartan domain, where each $\Omega_i$ is a classical Cartan domain with rank $r_i$. Let $z_i=(z_{i,1},\ldots,z_{i,d_i}) \in \mathbb C^{d_i}$ then  $z= (z_1;\ldots;z_k)=(z_{1,1}, \ldots, z_{1,d_1};z_{2,1},\ldots,z_{2,d_2};\ldots, z_{k,d_k}) \in \mathbb C^d,$ where $d=d_1+\cdots+d_k$. It is known that the Shilov boundary $S_D$ of the domain $D$ is given by $S_D=S_{\Omega_1}\times \cdots \times S_{\Omega_k}.$ For each $1 \leq i \leq k$ and $1 \leq \ell_i \leq r_i$, let $\Delta_{\Omega_i}^{(\ell_i)}(z_i,w_i)$ be the sesqui-analytic homogeneous polynomials of bi-degree $(\ell_i,\ell_i)$ associated with the classical Cartan domain domain $\Omega_i$ given by the equation \eqref{delk}. A direct generalization of Lemma \ref{Shilov} gives a description of the Shilov boundary of $D$.

\begin{corollary} \label{S_D}
    The Shilov boundary $S_D$ of the domain $D$ is given by
    \[\left\{ z=(z_1;\ldots;z_k) \in \mathbb C^d: \Delta_{\Omega_i}^{(\ell_i)}(z_i,z_i)=\binom{r_i}{\ell_i}\; \; \text{for all}\;\; 1\leq \ell_i \leq r_i, \; 1 \leq i \leq k\right\}.\]
\end{corollary}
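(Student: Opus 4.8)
The plan is to reduce the statement for the product domain $D = \Omega_1 \times \cdots \times \Omega_k$ to the single-factor case handled in Lemma~\ref{Shilov}, exploiting that both the Shilov boundary and the defining polynomials factor through the individual factors. The key structural input is the product formula $S_D = S_{\Omega_1} \times \cdots \times S_{\Omega_k}$, stated in the excerpt, together with the observation that the polynomial $\Delta_{\Omega_i}^{(\ell_i)}(z_i, w_i)$ depends only on the $i$-th block $z_i \in \mathbb C^{d_i}$ and is precisely the sesqui-analytic polynomial attached to the factor $\Omega_i$ via \eqref{delk}. Because of this block-separation, the joint conditions $\Delta_{\Omega_i}^{(\ell_i)}(z_i,z_i)=\binom{r_i}{\ell_i}$ decouple across $i$, so that a point $z=(z_1;\ldots;z_k)$ satisfies all of them if and only if each block $z_i$ satisfies the corresponding system within its own factor.

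First I would fix $1 \leq i \leq k$ and apply Lemma~\ref{Shilov} to the classical Cartan domain $\Omega_i$: the lemma gives that $z_i \in S_{\Omega_i}$ if and only if $\Delta_{\Omega_i}^{(\ell_i)}(z_i,z_i) = \binom{r_i}{\ell_i}$ for all $1 \leq \ell_i \leq r_i$. Doing this for each $i$ and then conjoining the equivalences, I obtain that $z_i \in S_{\Omega_i}$ for all $i$ precisely when the full collection of defining equations in the statement holds. Second I would invoke the product formula for the Shilov boundary to identify the condition $z_i \in S_{\Omega_i}$ for all $i$ with the single condition $z \in S_{\Omega_1} \times \cdots \times S_{\Omega_k} = S_D$. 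Chaining the two equivalences yields exactly the claimed description of $S_D$, completing the proof.

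The proof is essentially a bookkeeping argument, so the main obstacle is not conceptual difficulty but rather making the block-independence explicit and rigorous. In particular, I would want to state carefully that the polar decomposition on the product domain factors as $z = (k_1 \cdot \sum_j t_{1,j} e_{1,j};\ldots;k_k \cdot \sum_j t_{k,j} e_{k,j})$ with each $k_i$ in the maximal compact subgroup $\mathbb K_i$ of $\mathrm{Aut}(\Omega_i)$, and that the $\mathbb K$-invariance of each $\Delta_{\Omega_i}^{(\ell_i)}$ used in Lemma~\ref{Shilov} is inherited at the level of the factor. This is exactly where the phrase \emph{direct generalization} in the text is justified: the computation of $\Delta_{\Omega_i}^{(\ell_i)}(z_i,z_i)$ as an elementary symmetric polynomial $\sum_{1 \leq j_1 < \cdots < j_{\ell_i} \leq r_i} t_{i,j_1}^2 \cdots t_{i,j_{\ell_i}}^2$ in the singular numbers of $z_i$ carries over verbatim, and the inductive argument forcing all singular numbers equal to $1$ applies within each factor unchanged. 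Since the calculation is identical to that of the lemma with $\Omega$ replaced by $\Omega_i$, I would not reproduce it in full but simply cite the proof of Lemma~\ref{Shilov} applied factorwise.
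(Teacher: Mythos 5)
Your argument is correct and matches what the paper intends: the paper offers no separate proof, stating only that the corollary is a direct generalization of Lemma~\ref{Shilov}, and your factorwise application of that lemma combined with the product formula $S_D = S_{\Omega_1}\times\cdots\times S_{\Omega_k}$ is exactly the intended route. No gaps.
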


Note that the topological boundary $\partial\Omega$ of a classical Cartan domain $\Omega$ is the disjoint union of $r$ number of $G$-orbits, that is, $\partial \Omega= \bigcupdot_{j=1}^r \partial\Omega_j,$ where $\partial\Omega_j=\{g(\sum_{i=1}^je_i): g \in G\}$ for all $1 \leq j \leq r$. The Shilov boundary $S_{\Omega}$ is the $G$-orbit $\partial\Omega_r,$ which is also an irreducible $\mathbb K$-orbit (See \cite[page 18]{Arazy}). As we obtained a description of the Shilov boundary in Lemma \ref{Shilov}, a similar characterization can be deduced for the other boundary components. One such description for other boundary components of type-I domain is given below in the Appendix (see  Theorem \ref{other-boundary}). One may define a possible notion of isometries associated with other boundary components, as the Cartan isometry associated with the Shilov boundary is defined in Definition \ref{defn}, which we intend to investigate in our future research. However, in this article, we will focus exclusively on the Cartan isometry.

For any two analytic polynomials $p,q$ in $d$-variables, the hereditary functional calculus is defined as \[p(z)\overline{q(w)}(\boldsymbol{T},\boldsymbol{T}^*):=q(\boldsymbol{T})^* p(\boldsymbol{T}).\] In a private communication with Sameer Chavan, we found out that the next result has been recorded in one of his unpublished works. For the sake of the reader's convenience, we include the proof.

\begin{lemma}\label{lemmaSC}
    Let $\boldsymbol{T}$ be a subnormal $d$-tuple defined on $\mathcal H$ and $\boldsymbol{N}$ denote its minimal normal extension on $\mathcal{K}\supseteq \mathcal H.$ Let $p_i, q_i$ be $2n$ analytic polynomials in $d$-variables. If \[ \sum_{i=1}^n p_i(z)\overline{q_i(w)}(\boldsymbol{T},\boldsymbol{T}^*)= I_{\mathcal{H}},\; \text{ then } \;
    \sum_{i=1}^n p_i(z)\overline{q_i(w)}(\boldsymbol{N},\boldsymbol{N}^*)=  I_{\mathcal{K}}.\]
\end{lemma}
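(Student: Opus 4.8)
The plan is to show that the single operator $A \defin \sum_{i=1}^n q_i(\boldsymbol N)^* p_i(\boldsymbol N)$, which is bounded on $\mathcal K$ as a finite sum of products of polynomials in the bounded normal operators $N_1,\ldots,N_d$, is exactly $I_{\mathcal K}$. First I would record the two structural facts that drive everything. Since $N_j\mathcal H \subseteq \mathcal H$ and $N_j|_{\mathcal H}=T_j$, every analytic polynomial satisfies $p(\boldsymbol N)|_{\mathcal H}=p(\boldsymbol T)$ and $p(\boldsymbol N)\mathcal H \subseteq \mathcal H$; compressing the co-analytic factors then gives $P_{\mathcal H}\,q_i(\boldsymbol N)^*|_{\mathcal H}=q_i(\boldsymbol T)^*$. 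Consequently the hypothesis $\sum_i q_i(\boldsymbol T)^* p_i(\boldsymbol T)=I_{\mathcal H}$ is precisely the statement $P_{\mathcal H}A|_{\mathcal H}=I_{\mathcal H}$, that is, $\langle Ax,y\rangle=\langle x,y\rangle$ for all $x,y\in\mathcal H$. The task is to upgrade this compression identity to an identity on all of $\mathcal K$.

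Two ingredients make this possible. Because $\boldsymbol N$ is a commuting normal tuple, the Fuglede--Putnam theorem yields $N_iN_j^*=N_j^*N_i$ for all $i,j$, so $A$, being a polynomial in the $N_j$ and $N_j^*$, commutes with every $N_j$ and every $N_j^*$; in particular $A$ commutes with $\boldsymbol N^{*\alpha}$ and with $\boldsymbol N^{\beta}$ for all multi-indices $\alpha,\beta$. Second, by minimality $\mathcal K=\bigvee\{\boldsymbol N^{*\alpha}h : h\in\mathcal H,\ \alpha\in\mathbb N^d\}$, so by sesquilinearity and boundedness it suffices to verify $\langle Af,g\rangle=\langle f,g\rangle$ when $f=\boldsymbol N^{*\alpha}h$ and $g=\boldsymbol N^{*\beta}h'$ with $h,h'\in\mathcal H$.

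The heart of the argument is then a direct computation on these spanning vectors. Starting from $\langle A\boldsymbol N^{*\alpha}h,\boldsymbol N^{*\beta}h'\rangle$, I would slide $A$ past $\boldsymbol N^{*\alpha}$ (commutation), move $\boldsymbol N^{*\beta}$ onto the left factor as $\boldsymbol N^{\beta}$, commute $\boldsymbol N^{\beta}$ past $\boldsymbol N^{*\alpha}$ (normality of the tuple) and past $A$ (commutation), and finally use $\boldsymbol N^{\beta}h=\boldsymbol T^{\beta}h$ to land on a vector of $\mathcal H$. A symmetric maneuver on the other factor reduces the whole inner product to $\langle Ax,y\rangle$ with $x=\boldsymbol T^{\beta}h\in\mathcal H$ and $y=\boldsymbol T^{\alpha}h'\in\mathcal H$, where the compression identity of the first paragraph applies and gives $\langle x,y\rangle=\langle \boldsymbol T^{\beta}h,\boldsymbol T^{\alpha}h'\rangle$. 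The same normality computation applied to $\langle f,g\rangle$ (with $A$ absent) produces the identical value, so the two agree and hence $A=I_{\mathcal K}$. The main obstacle is precisely this sliding step: one must use that $\boldsymbol N$ is a \emph{jointly} commuting normal tuple (so that $\boldsymbol N^{\beta}$ genuinely commutes with $\boldsymbol N^{*\alpha}$ via Fuglede--Putnam) in order to pull the surviving powers of $\boldsymbol N$ back inside $\mathcal H$; without this the co-analytic factors $\boldsymbol N^{*\alpha}$ cannot be converted into analytic action on $\mathcal H$, and the hypothesis --- which lives only on $\mathcal H$ --- could not be invoked.
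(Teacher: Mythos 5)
Your proposal is correct and follows essentially the same route as the paper: both arguments use Fuglede--Putnam to commute the (co-)analytic factors past the powers $\boldsymbol N^{*\alpha}$, reduce the inner product on the spanning vectors $\boldsymbol N^{*\alpha}h$ to one between vectors $\boldsymbol T^{\beta}h,\boldsymbol T^{\alpha}g\in\mathcal H$ where the hypothesis applies, and conclude by minimality. The only (harmless) difference is presentational: you package the sum as a single operator $A$ and record its commutation with $\boldsymbol N$ and $\boldsymbol N^*$ up front, while the paper carries the sum through the computation term by term.
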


\begin{proof}
For any $h, g \in \mathcal{H}$ and $\alpha, \beta \in \mathbb{N}^d$, it follows by using repetition of Fuglede-Putnam's theorem,
\beqn \Inp{\sum_{i=1}^n p_i(z)\overline{q_i (w)}(\boldsymbol{N}, \boldsymbol{N}^*){\boldsymbol{N}}^{*\alpha}h}{{\boldsymbol{N}}^{*\beta}g}_{\mathcal{K}}
           &=& \sum_{i=1}^n \Inp{\boldsymbol{N}^\beta p_i (\boldsymbol{N}) h}{\boldsymbol{N}^\alpha q_i (\boldsymbol{N})g}_{\mathcal{K}} \\
          &=& \sum_{i=1}^n \Inp{\boldsymbol{T}^\beta p_i (\boldsymbol{T}) h}{\boldsymbol{T}^\alpha q_i (\boldsymbol{T})g}_{\mathcal{H}}\\
           &=& \big \langle\sum_{i=1}^n p_i(z)\overline{q_i (w)}(\boldsymbol{T},\boldsymbol{T}^*)\boldsymbol{T}^\beta h, \boldsymbol{T}^\alpha g\big \rangle_\mathcal{H}\\
         &=&\inp{\boldsymbol{T}^\beta h}{\boldsymbol{T}^\alpha g}_{\mathcal{H}}\\
    &=&\inp{\boldsymbol{N}^\beta h}{\boldsymbol{N}^\alpha g}_{\mathcal{K}}
        = \inp{\boldsymbol{N}^{*\alpha} h}{\boldsymbol{N}^{*\beta} g}_{\mathcal{K}}. 
         \eeqn
The conclusion follows from the minimality of the normal extension.
\end{proof}

In the following result, we provide a characterization of Cartan isometries.
\begin{theorem}\label{characterization}
Let $\boldsymbol{T}$ be a commuting $d$-tuple of bounded linear operators defined on $\mathcal{H}$. Then $\boldsymbol{T}$ is a Cartan isometry if and only if  \beq \label{Cartaniso}\Delta^{(\ell)}(z,w)(\boldsymbol{T},\boldsymbol{T}^*)=\binom{r}{\ell} I_{\mathcal{H}}\; \mbox{~for~all~} 1 \leq \ell \leq r.\eeq
\end{theorem}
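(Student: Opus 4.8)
The plan is to prove both implications by moving between $\boldsymbol{T}$ and its minimal normal extension $\boldsymbol{N}$, using throughout the sum-of-squares expression for $\Delta^{(\ell)}$ recorded in \eqref{delk}, namely $\Delta^{(\ell)}(z,w)=\prod_{j=1}^{\ell}(1+\tfrac{a}{2}(j-1))\sum_{\alpha}\psi_\alpha^{(\ell)}(z)\overline{\psi_\alpha^{(\ell)}(w)}$. For the \emph{forward} implication, suppose $\boldsymbol{T}$ is a Cartan isometry with minimal normal extension $\boldsymbol{N}$ on $\mathcal{K}\supseteq\mathcal{H}$, so that $\sigma(\boldsymbol{N})\subseteq S_\Omega$. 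By the hereditary calculus, $\Delta^{(\ell)}(z,w)(\boldsymbol{T},\boldsymbol{T}^*)=\prod_{j=1}^{\ell}(1+\tfrac{a}{2}(j-1))\sum_{\alpha}\psi_\alpha^{(\ell)}(\boldsymbol{T})^{*}\psi_\alpha^{(\ell)}(\boldsymbol{T})$, and since each $\psi_\alpha^{(\ell)}$ is an analytic polynomial and $N_i|_{\mathcal{H}}=T_i$, we have $\psi_\alpha^{(\ell)}(\boldsymbol{T})h=\psi_\alpha^{(\ell)}(\boldsymbol{N})h$ for every $h\in\mathcal{H}$. Hence the quadratic form of $\Delta^{(\ell)}(z,w)(\boldsymbol{T},\boldsymbol{T}^*)$ on $\mathcal{H}$ coincides with that of $\Delta^{(\ell)}(z,w)(\boldsymbol{N},\boldsymbol{N}^*)$. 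The spectral theorem for the normal tuple $\boldsymbol{N}$ gives $\Delta^{(\ell)}(z,w)(\boldsymbol{N},\boldsymbol{N}^*)=\int_{\sigma(\boldsymbol{N})}\Delta^{(\ell)}(z,z)\,dE(z)$, and as $\Delta^{(\ell)}(z,z)=\binom{r}{\ell}$ on $S_\Omega\supseteq\sigma(\boldsymbol{N})$ by Lemma \ref{Shilov}, this integral is $\binom{r}{\ell}I$. Restricting to $\mathcal{H}$ yields \eqref{Cartaniso}.

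For the \emph{converse}, assume \eqref{Cartaniso}. The argument splits into two parts. First one establishes that $\boldsymbol{T}$ is subnormal; granting this, let $\boldsymbol{N}$ be its minimal normal extension. Dividing \eqref{Cartaniso} by $\binom{r}{\ell}$ rewrites it in the normalized form $\sum_{\alpha}p_\alpha(z)\overline{p_\alpha(w)}(\boldsymbol{T},\boldsymbol{T}^*)=I_{\mathcal{H}}$ with $p_\alpha=\bigl(\binom{r}{\ell}^{-1}\prod_{j=1}^{\ell}(1+\tfrac{a}{2}(j-1))\bigr)^{1/2}\psi_\alpha^{(\ell)}$, so Lemma \ref{lemmaSC} applies (with $q_\alpha=p_\alpha$) and lifts each identity to the normal extension: $\Delta^{(\ell)}(z,w)(\boldsymbol{N},\boldsymbol{N}^*)=\binom{r}{\ell}I_{\mathcal{K}}$ for all $1\le\ell\le r$.

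It remains to localize $\sigma(\boldsymbol{N})$. Since $\boldsymbol{N}$ is normal, $\sigma(\boldsymbol{N})=\sigma_\pi(\boldsymbol{N})$; fixing $\lambda\in\sigma_\pi(\boldsymbol{N})$ and unit vectors $h_n$ with $(N_i-\lambda_i)h_n\to 0$, one gets $\psi_\alpha^{(\ell)}(\boldsymbol{N})h_n-\psi_\alpha^{(\ell)}(\lambda)h_n\to 0$, so $\langle\Delta^{(\ell)}(z,w)(\boldsymbol{N},\boldsymbol{N}^*)h_n,h_n\rangle\to\Delta^{(\ell)}(\lambda,\lambda)$, while the left side equals $\binom{r}{\ell}$ for every $n$. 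Thus $\Delta^{(\ell)}(\lambda,\lambda)=\binom{r}{\ell}$ at \emph{every} spectral point $\lambda$ and every $\ell$; equivalently, $\Delta^{(\ell)}(z,z)=\binom{r}{\ell}$ holds $E$-almost everywhere because the self-adjoint identity $\int(\Delta^{(\ell)}(z,z)-\binom{r}{\ell})\,dE=0$ squares to $\int(\Delta^{(\ell)}(z,z)-\binom{r}{\ell})^2\,dE=0$. By Lemma \ref{Shilov} this says precisely $\sigma(\boldsymbol{N})\subseteq S_\Omega$, so $\boldsymbol{T}$ is a Cartan isometry in the sense of Definition \ref{defn}.

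The genuinely hard step is the \textbf{subnormality} of $\boldsymbol{T}$ in the converse, which is not formal: already for $r=1$ the single relation $\sum_i T_i^{*}T_i=rI$ is the spherical-isometry condition, whose subnormality is a theorem of Athavale rather than an algebraic triviality. For general $r$ I expect this to be the main obstacle, to be handled either by verifying directly the operator moment positivity $\bigl(\langle\boldsymbol{T}^{*\beta}\boldsymbol{T}^{\alpha}g_\beta,g_\alpha\rangle\bigr)_{\alpha,\beta}\ge 0$ that characterizes subnormal tuples, or by showing that \eqref{Cartaniso} entails the domain-by-domain positivity conditions of \cite{At}, from which subnormality is known. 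It is worth flagging that mere matching of averages, $\int\Delta^{(\ell)}(z,z)\,d\langle E(\cdot)h,h\rangle=\binom{r}{\ell}$ for each $h$, would \emph{not} force the spectrum onto $S_\Omega$; it is the full operator identity on $\boldsymbol{N}$, accessible only once subnormality permits invoking Lemma \ref{lemmaSC}, that makes the pointwise localization in the previous paragraph work.
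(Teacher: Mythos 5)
Your forward implication and the second half of your converse (lifting \eqref{Cartaniso} to the minimal normal extension via Lemma \ref{lemmaSC} and then localizing $\sigma(\boldsymbol N)$ inside $S_\Omega$ via Lemma \ref{Shilov}) coincide with the paper's argument. The problem is the step you explicitly leave open: subnormality of $\boldsymbol T$ in the converse. As written, your proof is incomplete, since you only ``grant'' subnormality and then speculate that for general $r$ it must be extracted either from operator moment positivity or from the domain-by-domain conditions of \cite{At}. That is a genuine gap, not a routine omission, because without subnormality you have no minimal normal extension to which Lemma \ref{lemmaSC} could be applied, and your own closing paragraph correctly observes that the whole localization argument hinges on having the operator identity on all of $\mathcal K$.

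The irony is that you already name the tool that closes the gap and then talk yourself out of using it. The $\ell=1$ case of \eqref{Cartaniso} reads $\Delta^{(1)}(z,w)(\boldsymbol T,\boldsymbol T^*)=\sum_{i=1}^d T_i^*T_i=\binom{r}{1}I_{\mathcal H}=rI_{\mathcal H}$, and this holds for \emph{every} rank $r$, not only $r=1$: the rank of the domain enters only as the constant on the right-hand side. Rescaling, $(T_1/\sqrt r,\ldots,T_d/\sqrt r)$ satisfies the spherical-isometry relation, so Athavale's theorem (\cite[Proposition 2]{At2}, which the paper invokes at exactly this point) yields subnormality of $\boldsymbol T$ outright, with no case analysis and no moment-positivity verification. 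Once you insert that one sentence, your proof matches the paper's.
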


\begin{proof}
    Let $\boldsymbol{T}$ be a Cartan isometry, and let $\boldsymbol N$ be its minimal normal extension. Let $\{\psi_{\alpha}^{(\ell)}\}$ be an orthonormal basis of $\mathcal{P}_{(\ell)}$ with respect to the Fischer-Fock inner product.  Then for any $ 1 \leq \ell \leq r$ and  $h \in \mathcal H$, we have 
\beqn
     \Inp{\Delta^{(\ell)}(z,w)(\boldsymbol{T},\boldsymbol{T}^*)h}{h}_{\mathcal{H}}
    &\overset{\eqref{delk}}=& \prod_{j=1}^{\ell} (1+\frac{a}{2}(j-1))\big \langle\sum_{\alpha}\psi_{\alpha}^{(\ell)}(z) \overline{\psi_{\alpha}^{(\ell)}(w)}(\boldsymbol{T},\boldsymbol{T}^*)h, h\big \rangle_{\mathcal{H}}\\
    &=&\prod_{j=1}^{\ell} (1+\frac{a}{2}(j-1))\sum_{\alpha}\Inp{ {\psi_{\alpha}^{(\ell)}(\boldsymbol{N})}h}{\psi_{\alpha}^{(\ell)}(\boldsymbol{N})h}_{\mathcal{K}}\\
    &=&\int_{\sigma(\boldsymbol N)}\prod_{j=1}^{\ell} (1+\frac{a}{2}(j-1))\sum_{\alpha}\psi_{\alpha}^{(\ell)}(z) \overline{\psi_{\alpha}^{(\ell)}(z)} d\inp{E(z)h}{h}_{\mathcal{K}}\\
    &=&\int_{\sigma(\boldsymbol N) }\Delta^{(\ell)}(z,z) d\inp{E(z)h}{h}_{\mathcal{K}},\eeqn
    where $E$ is the spectral measure for $\boldsymbol N.$
Since $\sigma(\boldsymbol N) \subseteq S_{\Omega},$ by Lemma \ref{Shilov}, we get
\beqn  
   \Inp{\Delta^{(\ell)}(z,w)(\boldsymbol{T},\boldsymbol{T}^*)h}{h}_{\mathcal{H}} = \binom{r}{\ell} \inp{h}{h}_{\mathcal{H}}.
\eeqn
Conversely, assume that for each $1 \leq \ell \leq r,$  $\Delta^{(\ell)}(z,w)(\boldsymbol{T},\boldsymbol{T}^*)= \binom{r}{\ell} I_{\mathcal{H}}$. In particular for $\ell =1$, $\Delta^{(1)}(z,w)(\boldsymbol{T},\boldsymbol{T}^*)= \sum_{i=1}^d T_i^* T_i = r I_{\mathcal{H}}$.
By \cite[Proposition 2]{At2}, $\boldsymbol{T}$ is subnormal. Let $\boldsymbol{N}=(N_1,\ldots N_d)$ be the minimal normal extension of $\boldsymbol{T}$ on $\mathcal{K}\supseteq \mathcal H$. To show that $\boldsymbol{T}$ is a Cartan isometry, it suffices to show that Taylor joint spectrum $\sigma(\boldsymbol{N})$ is contained in $S_\Omega$.
By Lemma \ref{lemmaSC},  $\Delta^{(\ell)}(z,w)(\boldsymbol{N},\boldsymbol{N}^*)= \binom{r}{\ell} I_{\mathcal{K}}$ for all $1 \leq \ell \leq r.$ By using functional calculus for $\boldsymbol N,$  $\Delta^{(\ell)}(z,z)=\binom{r}{\ell}$ for all $z \in \sigma(\boldsymbol{N}).$ Now the conclusion follows from Lemma \ref{Shilov}.

\end{proof}

 Let $\boldsymbol{T}=(T_{1,1},\ldots,T_{1,d_1}; T_{2,1},\ldots,T_{2,d_2};\ldots,T_{k,d_k})$ be a commuting $d$-tuple of bounded linear operators defined on $\mathcal{H}$, where $d=d_1+\cdots+d_k.$ Set $\boldsymbol{T}_i=(T_{i,1},\ldots,T_{i,d_i}),$ then $\boldsymbol{T}=(\boldsymbol{T}_1;\ldots;\boldsymbol{T}_k).$
 The following result analogous to Theorem \ref{characterization} can be obtained by using Corollary \ref{S_D} for the standard Cartan domain $D=\Omega_1 \times \cdots \times \Omega_k \subseteq \mathbb C^{d_1}\times \cdots \times \mathbb C^{d_k} \approx \mathbb C^{d}.$
\begin{corollary}
 Let $\boldsymbol T$ be the $d$-tuple of operators specified as above. Then $\boldsymbol T$ is subnormal with the Taylor joint spectrum $\sigma(\boldsymbol{N})$ of its minimal normal extension $\boldsymbol{N}$ is contained in the Shilov boundary $S_D$ of the standard Cartan domain $D$ if and only if \[\Delta_{\Omega_i}^{(\ell_i)}(z,w)(\boldsymbol{T}_i,\boldsymbol{T}_i^*)=\binom{r_i}{\ell_i}I_{\mathcal{H}}\;\; \text{for all}\;\; 1\leq \ell_i \leq r_i,\; 1 \leq i \leq k.\]
\end{corollary}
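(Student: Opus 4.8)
The plan is to mirror the proof of Theorem~\ref{characterization}, replacing the single-domain boundary description (Lemma~\ref{Shilov}) with its product analogue (Corollary~\ref{S_D}), while handling the one genuinely new point: establishing joint subnormality of the full tuple $\boldsymbol T=(\boldsymbol T_1;\ldots;\boldsymbol T_k)$ in the converse direction. Throughout I would exploit that $\Delta_{\Omega_i}^{(\ell_i)}(z,w)$ depends only on the $i$-th block of variables $z_i,w_i$, so that its hereditary functional calculus evaluated at $(\boldsymbol T,\boldsymbol T^*)$ agrees with the one evaluated at $(\boldsymbol T_i,\boldsymbol T_i^*)$. The same remark applies to the minimal normal extension $\boldsymbol N$ of $\boldsymbol T$ and its $i$-th block $\boldsymbol N_i$, which is a (not necessarily minimal) normal extension of $\boldsymbol T_i$.

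For the forward implication, assume $\boldsymbol T$ is subnormal with minimal normal extension $\boldsymbol N$ on $\mathcal K$ and spectral measure $E$, and $\sigma(\boldsymbol N)\subseteq S_D$. Fix $i$ and $1\le \ell_i\le r_i$, and let $\{\psi_\alpha^{(\ell_i)}\}$ be a Fischer--Fock orthonormal basis of the relevant summand for $\Omega_i$. Using \eqref{delk} for $\Omega_i$, the identities $\psi_\alpha^{(\ell_i)}(\boldsymbol T_i)h=\psi_\alpha^{(\ell_i)}(\boldsymbol N_i)h$ for $h\in\mathcal H$, and the spectral theorem for $\boldsymbol N$, I obtain exactly as in Theorem~\ref{characterization} that
\[
\big\langle \Delta_{\Omega_i}^{(\ell_i)}(z,w)(\boldsymbol T_i,\boldsymbol T_i^*)h,h\big\rangle_{\mathcal H}
=\int_{\sigma(\boldsymbol N)}\Delta_{\Omega_i}^{(\ell_i)}(z_i,z_i)\,d\langle E(z)h,h\rangle_{\mathcal K}.
\]
Since $\sigma(\boldsymbol N)\subseteq S_D$, Corollary~\ref{S_D} forces the integrand to equal the constant $\binom{r_i}{\ell_i}$ on the support of $E$, and the claimed operator identity follows for every $i$ and $\ell_i$.

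For the converse, the main obstacle is subnormality of the whole tuple $\boldsymbol T$, since subnormality of each block $\boldsymbol T_i$ does not by itself yield joint subnormality of $(\boldsymbol T_1;\ldots;\boldsymbol T_k)$. The key observation that resolves this is that, taking $\ell_i=1$ for every $i$ and using $\Delta_{\Omega_i}^{(1)}(z,w)=\langle z_i,w_i\rangle$ together with $\binom{r_i}{1}=r_i$, the hypotheses read $\sum_{j=1}^{d_i}T_{i,j}^*T_{i,j}=r_i I_{\mathcal H}$; summing over $i$ gives $\sum_{i,j}T_{i,j}^*T_{i,j}=\big(\sum_i r_i\big)I_{\mathcal H}$. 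Hence $\boldsymbol T$ is, up to the scalar $\big(\sum_i r_i\big)^{1/2}$, a spherical isometry in $\mathbb C^d$, so by \cite[Proposition~2]{At2} the full tuple $\boldsymbol T$ is subnormal; let $\boldsymbol N$ be its minimal normal extension.

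Finally, to locate the spectrum, I would feed each hypothesis into Lemma~\ref{lemmaSC}: writing $\tfrac{1}{\binom{r_i}{\ell_i}}\Delta_{\Omega_i}^{(\ell_i)}(z,w)$ as a finite sum $\sum_\alpha p_\alpha(z)\overline{q_\alpha(w)}$ in the $d$ variables (supported on the $i$-th block), the assumption says this equals $I_{\mathcal H}$ at $(\boldsymbol T,\boldsymbol T^*)$, so Lemma~\ref{lemmaSC} upgrades it to $\Delta_{\Omega_i}^{(\ell_i)}(z,w)(\boldsymbol N,\boldsymbol N^*)=\binom{r_i}{\ell_i}I_{\mathcal K}$. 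Applying the functional calculus for the normal tuple $\boldsymbol N$ then yields $\Delta_{\Omega_i}^{(\ell_i)}(z_i,z_i)=\binom{r_i}{\ell_i}$ for every $z\in\sigma(\boldsymbol N)$ and all $i,\ell_i$, whence $\sigma(\boldsymbol N)\subseteq S_D$ by Corollary~\ref{S_D}. I expect the only delicate points to be the block-versus-full-tuple bookkeeping for the functional calculus and the verification that the scaled spherical isometry meets the hypotheses of Lemma~\ref{lemmaSC}; both are routine once the scaled-spherical-isometry reduction is in place.
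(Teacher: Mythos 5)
Your proposal is correct and follows essentially the same route the paper intends: the paper states this corollary without a separate proof, remarking only that it is obtained by running the argument of Theorem \ref{characterization} with Corollary \ref{S_D} in place of Lemma \ref{Shilov}, which is precisely what you do (including the $\ell_i=1$ reduction to a scaled spherical isometry via \cite[Proposition 2]{At2} for subnormality of the full tuple, and the use of Lemma \ref{lemmaSC} plus the functional calculus to locate $\sigma(\boldsymbol N)$ in $S_D$). Your explicit treatment of the block-versus-full-tuple bookkeeping is a reasonable filling-in of details the paper leaves implicit.
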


For a Cartan isometry $\boldsymbol T,$ it is reasonable to expect that the Taylor joint spectrum $\sigma(\boldsymbol{T})$ of $\boldsymbol T$ is contained in $\bar{\Omega},$ the closure of the domain $\Omega.$ 

In order to prove this, we first establish a couple of lemmas. We begin by setting the following notation. The idea is motivated from \cite{Ch-Ze}.
 \[r_\Omega(\boldsymbol{T}):=\sup \{\|z\|: z \in \sigma(\boldsymbol{T})\} \; \text{and} \; r_\Omega^\pi (\boldsymbol{T}):=\sup \{\|z\| : z \in \sigma_\pi (\boldsymbol{T})\},\]
where  $\|z\|$ is the spectral norm of $z$ associated to the domain $\Omega$.

\begin{lemma}\label{specrad}
    For a commuting $d$-tuple $\boldsymbol{T}$, $r_\Omega(\boldsymbol{T})=r_\Omega^\pi (\boldsymbol{T})$.
\end{lemma}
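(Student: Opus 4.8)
The plan is to prove the equality $r_\Omega(\boldsymbol{T})=r_\Omega^\pi(\boldsymbol{T})$ by establishing both inequalities. Since $\sigma_\pi(\boldsymbol{T})\subseteq\sigma(\boldsymbol{T})$ always holds for a commuting tuple, the inequality $r_\Omega^\pi(\boldsymbol{T})\leq r_\Omega(\boldsymbol{T})$ is immediate from the definitions of the two quantities as suprema of $\|z\|$ over the respective spectra. Thus the real content is the reverse inequality $r_\Omega(\boldsymbol{T})\leq r_\Omega^\pi(\boldsymbol{T})$, for which I would exploit the fact that the spectral norm $\|z\|$ associated to $\Omega$ is a $\mathbb{K}$-invariant norm and, more importantly, can be recovered from the $\mathbb{K}$-invariant polynomial data $\Delta^{(\ell)}(z,z)$ describing the boundary structure (cf. Lemma \ref{Shilov}).

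First I would recall the general principle, in the spirit of the classical Taylor spectral theory, that the topological boundary of the Taylor joint spectrum is contained in the joint approximate point spectrum: $\partial\sigma(\boldsymbol{T})\subseteq\sigma_\pi(\boldsymbol{T})$. This is the multivariable analogue of the single-operator fact that the boundary of the spectrum lies in the approximate point spectrum, and it follows because points in $\sigma(\boldsymbol{T})\setminus\sigma_\pi(\boldsymbol{T})$ are interior points of $\sigma(\boldsymbol{T})$ (a non-approximate-eigenvalue point admits a one-sided bounded inverse for the associated Koszul complex map, which persists under small perturbations, forcing the point to lie in the resolvent or the interior). Granting this, the supremum defining $r_\Omega(\boldsymbol{T})$ is attained, or approached, along $\sigma(\boldsymbol{T})$; since $\sigma(\boldsymbol{T})$ is compact, there exists $z_0\in\sigma(\boldsymbol{T})$ with $\|z_0\|=r_\Omega(\boldsymbol{T})$.

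The key geometric observation is that a point $z_0\in\sigma(\boldsymbol{T})$ of maximal spectral norm must lie on the topological boundary $\partial\sigma(\boldsymbol{T})$: indeed, the map $z\mapsto\|z\|$ is continuous, and if $z_0$ were interior to $\sigma(\boldsymbol{T})$ one could move in the direction that strictly increases $\|z\|$ while staying inside $\sigma(\boldsymbol{T})$, contradicting maximality (here one uses that $\|\cdot\|$ has no interior local maxima on an open set, being a norm and hence having the maximum over any closed ball attained on its boundary sphere). Combining this with $\partial\sigma(\boldsymbol{T})\subseteq\sigma_\pi(\boldsymbol{T})$ yields $z_0\in\sigma_\pi(\boldsymbol{T})$, whence $r_\Omega(\boldsymbol{T})=\|z_0\|\leq r_\Omega^\pi(\boldsymbol{T})$. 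Together with the trivial inequality this gives equality.

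The main obstacle I anticipate is justifying rigorously the inclusion $\partial\sigma(\boldsymbol{T})\subseteq\sigma_\pi(\boldsymbol{T})$ in the Taylor joint spectrum setting, since Taylor spectrum is defined via exactness of the Koszul complex rather than via invertibility of a single operator, so the usual perturbation argument must be phrased at the level of the Koszul differential and its behavior under norm-small changes of the parameter $z$. A secondary technical point is ensuring that the sup defining $r_\Omega(\boldsymbol{T})$ is genuinely attained, which requires only compactness of $\sigma(\boldsymbol{T})$ together with continuity of the spectral norm $\|\cdot\|$; the latter holds because $\|z\|$ is the largest singular number and depends continuously on $z$. If the boundary-inclusion lemma is regarded as standard (it is, being well documented in the literature on the Taylor spectrum), then the remainder of the argument is a short geometric observation and the proof is quite clean.
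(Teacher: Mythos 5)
Your first inequality and the observation that a point of maximal spectral norm in the compact set $\sigma(\boldsymbol{T})$ must lie on the topological boundary $\partial\sigma(\boldsymbol{T})$ are both fine. The proof breaks at the key inclusion $\partial\sigma(\boldsymbol{T})\subseteq\sigma_\pi(\boldsymbol{T})$, which is \emph{false} for the Taylor joint spectrum when $d\geq 2$. Concretely, take $\boldsymbol{T}=(T_{z_1},T_{z_2})$ on $H^2(\mathbb{D}^2)$: here $\sigma(\boldsymbol{T})=\overline{\mathbb{D}}^2$, so the point $(1,0)$ lies on the topological boundary; but if $x_k$ are unit vectors then $\|T_{z_2}x_k\|=\|x_k\|=1$ for all $k$ (each $T_{z_i}$ is an isometry), so no sequence of unit vectors can satisfy $(T_{z_2}-0)x_k\to 0$, and $(1,0)\notin\sigma_\pi(\boldsymbol{T})$. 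This also pinpoints where your heuristic justification fails: a point outside $\sigma_\pi$ does give a Koszul differential at stage zero that is bounded below, and that condition is stable under perturbation, but in several variables it does not force Taylor invertibility of nearby points --- the higher cohomology groups of the Koszul complex can remain nonzero on a whole neighbourhood, so the point can be a boundary point of $\sigma(\boldsymbol{T})$ without being an approximate joint eigenvalue. Since your argument only produces \emph{some} maximizer $z_0$ on $\partial\sigma(\boldsymbol{T})$ and then invokes this false inclusion, the reverse inequality is not established.

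What is true, and what the paper actually uses, is a statement at the level of linear functionals rather than of boundary points: by Hahn--Banach choose $\phi$ with $\phi(z_0)=\|z_0\|$ and $|\phi(z)|\leq\|z\|$ for all $z$, and set $A=\phi(\boldsymbol{T})$. The spectral mapping theorem for the Taylor spectrum and for the joint approximate point spectrum under polynomial maps gives $\sigma(A)=\phi(\sigma(\boldsymbol{T}))$ and $\sigma_\pi(A)=\phi(\sigma_\pi(\boldsymbol{T}))$, whence $r(A)=r_\Omega(\boldsymbol{T})$ and $r_\pi(A)\leq r_\Omega^\pi(\boldsymbol{T})$; the single-operator identity $r(A)=r_\pi(A)$ then closes the argument. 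In effect the correct substitute for your boundary inclusion is that $\sigma_\pi(\boldsymbol{T})$ and $\sigma(\boldsymbol{T})$ have the same maximum of $|\phi|$ for every linear functional $\phi$ (equivalently, the same polynomially convex hull), which is strictly weaker than $\partial\sigma(\boldsymbol{T})\subseteq\sigma_\pi(\boldsymbol{T})$ and is exactly what the scalarization delivers. Note that in the example above the supremum of $\|\cdot\|$ is still attained at points of the distinguished boundary, which do lie in $\sigma_\pi(\boldsymbol{T})$, so the lemma itself survives --- but your route to it does not.
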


\begin{proof}
    Clearly, $r_\Omega^\pi (\boldsymbol{T}) \leq r_\Omega(\boldsymbol{T})$. Let $z_0 \in \sigma(\boldsymbol{T})$ be such that $\|z_0\|= r_\Omega(\boldsymbol{T})$. By the Hahn-Banach theorem, there exists a linear functional $\phi :\mathbb C^d\to \mathbb{C}$ such that $\phi(z_0)= \|z_0\|$ and $|\phi(z)| \leq \|z\|$, for all $z \in \mathbb C^d$. 
    Let $A =\phi(\boldsymbol{T}),$ then $A \in \mathcal{B}(\mathcal{H})$ and therefore by polynomial mapping theorem, \[\sigma(A)=\{\phi(z): z \in \sigma(\boldsymbol{T})\} \quad \text{ and}\quad\sigma_\pi (A)= \{\phi(z): z \in \sigma_\pi(\boldsymbol{T})\}.\]
    For any $\lambda \in \sigma(A),$ $|\lambda|= |\phi(z)| \leq \|z\| \leq r_\Omega(\boldsymbol{T})$. 
    Thus $r(A) \leq r_\Omega(\boldsymbol{T}),$ where $r(A)$ is the spectral radius of $A.$ Similarly,
    $r_\pi (A) :=\sup\{|\lambda|: \lambda \in \sigma_{\pi}(A)\}\leq r_\Omega^\pi(\boldsymbol{T}).$
    Since $\phi(z_0)=\|z_0\|=r_\Omega(\boldsymbol{T})$, $r(A) = r_\Omega(\boldsymbol{T})$. Now, the inequality $r_\Omega(\boldsymbol{T}) \leq r_\Omega^\pi (\boldsymbol{T})$ follows from the fact that $r(A)=r_\pi(A)$.
\end{proof}

\begin{lemma}\label{appspecT}
	 If $\boldsymbol{T}$ is a Cartan isometry then the joint approximate point spectrum $\sigma_{\pi}(\boldsymbol{T})$ is contained in $S_{\Omega}.$ 
\end{lemma}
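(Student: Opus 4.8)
The plan is to combine the characterization in Theorem \ref{characterization} with the description of $S_\Omega$ in Lemma \ref{Shilov}, testing the operator identity \eqref{Cartaniso} against approximate eigenvectors. Fix $\lambda=(\lambda_1,\ldots,\lambda_d)\in\sigma_\pi(\boldsymbol T)$ and choose unit vectors $h_n\in\mathcal H$ with $\|(T_i-\lambda_i)h_n\|\to 0$ for each $1\le i\le d$. Since $\boldsymbol T$ is a Cartan isometry, Theorem \ref{characterization} gives $\Delta^{(\ell)}(z,w)(\boldsymbol T,\boldsymbol T^*)=\binom{r}{\ell}I_{\mathcal H}$ for all $1\le\ell\le r$, so in particular $\langle\Delta^{(\ell)}(z,w)(\boldsymbol T,\boldsymbol T^*)h_n,h_n\rangle=\binom{r}{\ell}$ for every $n$. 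I would then show that the left-hand side converges to $\Delta^{(\ell)}(\lambda,\lambda)$, forcing $\Delta^{(\ell)}(\lambda,\lambda)=\binom{r}{\ell}$ for all $\ell$, whence $\lambda\in S_\Omega$ by Lemma \ref{Shilov}.

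The key technical step is the elementary fact that approximate eigenvectors pass through polynomials: for any analytic polynomial $p$ in $d$ variables, writing $p(z)-p(\lambda)=\sum_{i=1}^d(z_i-\lambda_i)q_i(z)$ for suitable polynomials $q_i$ and substituting the commuting tuple $\boldsymbol T$ yields $\|(p(\boldsymbol T)-p(\lambda)I)h_n\|\le\sum_{i=1}^d\|q_i(\boldsymbol T)\|\,\|(T_i-\lambda_i)h_n\|\to 0$. Applying this to each basis element $\psi_\alpha^{(\ell)}$ of $\mathcal P_{(\ell)}$ gives $\|\psi_\alpha^{(\ell)}(\boldsymbol T)h_n-\psi_\alpha^{(\ell)}(\lambda)h_n\|\to 0$, and hence $\|\psi_\alpha^{(\ell)}(\boldsymbol T)h_n\|^2\to|\psi_\alpha^{(\ell)}(\lambda)|^2$.

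Using the hereditary functional calculus together with the expansion \eqref{delk}, one has $\langle\Delta^{(\ell)}(z,w)(\boldsymbol T,\boldsymbol T^*)h_n,h_n\rangle=\prod_{j=1}^{\ell}(1+\tfrac a2(j-1))\sum_\alpha\|\psi_\alpha^{(\ell)}(\boldsymbol T)h_n\|^2$, where the sum over $\alpha$ is finite. Letting $n\to\infty$ and invoking the previous step, this converges to $\prod_{j=1}^{\ell}(1+\tfrac a2(j-1))\sum_\alpha|\psi_\alpha^{(\ell)}(\lambda)|^2=\Delta^{(\ell)}(\lambda,\lambda)$. Comparing with the constant value $\binom{r}{\ell}$ gives $\Delta^{(\ell)}(\lambda,\lambda)=\binom{r}{\ell}$ for every $1\le\ell\le r$, so Lemma \ref{Shilov} yields $\lambda\in S_\Omega$, as required.

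I do not expect a serious obstacle here: the entire argument is a testing-against-approximate-eigenvectors computation. The only points needing a little care are the justification that the limit may be exchanged with the finite sum over $\alpha$ and that the polynomial-continuity estimate holds uniformly in $n$; both are routine, since each $\psi_\alpha^{(\ell)}$ is a fixed polynomial and $\boldsymbol T$ is a bounded commuting tuple.
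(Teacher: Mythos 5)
Your proposal is correct and follows essentially the same route as the paper: test the identity $\Delta^{(\ell)}(z,w)(\boldsymbol T,\boldsymbol T^*)=\binom{r}{\ell}I_{\mathcal H}$ from Theorem \ref{characterization} against an approximate joint eigenvector sequence, pass to the limit to get $\Delta^{(\ell)}(\lambda,\lambda)=\binom{r}{\ell}$ for all $\ell$, and conclude via Lemma \ref{Shilov}. The only cosmetic difference is that you derive $\|p(\boldsymbol T)h_n-p(\lambda)h_n\|\to 0$ directly from the factorization $p(z)-p(\lambda)=\sum_i(z_i-\lambda_i)q_i(z)$, whereas the paper cites this fact from the literature.
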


\begin{proof}
 Let $z \in \sigma_{\pi}(\boldsymbol{T})$ then there exists a sequence of unit vectors $x_k$ such that for all $\alpha \in \N^d,$  $\lim_{k \to \infty}(\boldsymbol T^\alpha- z^\alpha)x_k= 0$ (see the discussion at page 187 in \cite{GR}).
 So, for any polynomial  $p$  in $d$-variables,  $\lim_{k\to \infty}\big(p(\boldsymbol{T})-p(z)\big)x_k=0$. Since
\beqn
\big|\|p(\boldsymbol{T})x_k\|-|p(z)|\big|=\big|\|p(\boldsymbol{T})x_k\|-\|p(z)x_k\|\big| \leq \|p(\boldsymbol{T})x_k -p(z)x_k\| \to 0 \text{ as } k \to \infty, \eeqn
 $\|p(\boldsymbol{T})x_k\| \to |p(z)|$ as $k \to \infty$.
It follows from Theorem \ref{characterization} that
\beqn
		\binom{r}{\ell} & \overset{\ref{delk}}=& \prod_{j=1}^{\ell} (1+\frac{a}{2}(j-1))\big \langle\sum_{\alpha}\psi_{\alpha}^{(\ell)}(\boldsymbol{T})^*\psi_{\alpha}^{(\ell)}(\boldsymbol{T})x_k, x_k\big \rangle \\ &=& \prod_{j=1}^{\ell} (1+\frac{a}{2}(j-1))\sum_{\alpha}\|\psi_{\alpha}^{(\ell)}(\boldsymbol{T})x_k\|^2.
\eeqn
  Now letting $k \to \infty,$ we get  \[\Delta^{(\ell)}(z,z)=\prod_{j=1}^{\ell} (1+\frac{a}{2}(j-1))\sum_{\alpha}|\psi_{\alpha}^{(\ell)}(z)|^2= \binom{r}{\ell},\]
  for all $ 1 \leq \ell \leq r.$ Now the conclusion follows from Lemma \ref{Shilov}.
\end{proof}

\begin{proposition}\label{specT}
    If $\boldsymbol{T}$ is a Cartan isometry, then the Taylor joint spectrum $\sigma(\boldsymbol{T}) \subseteq \bar{\Omega}$.
\end{proposition}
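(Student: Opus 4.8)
The plan is to reduce the statement to the estimate $r_\Omega(\boldsymbol{T}) \le 1$ for the domain spectral radius, since $\bar\Omega = \{z \in \mathbb C^d : \|z\| \le 1\}$ by the polar decomposition (a point $z$ lies in $\bar\Omega$ precisely when its largest singular number $t_1 = \|z\|$ does not exceed $1$). Thus proving $\sigma(\boldsymbol{T}) \subseteq \bar\Omega$ amounts to showing that every $z \in \sigma(\boldsymbol{T})$ satisfies $\|z\| \le 1$, equivalently $r_\Omega(\boldsymbol{T}) = \sup\{\|z\| : z \in \sigma(\boldsymbol{T})\} \le 1$.

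The heart of the argument is to transfer the question from the full Taylor joint spectrum to the joint approximate point spectrum, where we already have precise information. First I would invoke Lemma \ref{specrad} to write $r_\Omega(\boldsymbol{T}) = r_\Omega^\pi(\boldsymbol{T})$, so that it suffices to bound $r_\Omega^\pi(\boldsymbol{T}) = \sup\{\|z\| : z \in \sigma_\pi(\boldsymbol{T})\}$. Next, since $\boldsymbol{T}$ is a Cartan isometry, Lemma \ref{appspecT} gives $\sigma_\pi(\boldsymbol{T}) \subseteq S_\Omega$. Every $z \in S_\Omega$ has the form $z = k \cdot e$ with $k \in \mathbb K$ and $e = e_1 + \cdots + e_r$ a maximal tripotent, so all its singular numbers equal $1$ and hence $\|z\| = 1$. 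Therefore $r_\Omega^\pi(\boldsymbol{T}) \le 1$, and combining the two displays yields $r_\Omega(\boldsymbol{T}) = r_\Omega^\pi(\boldsymbol{T}) \le 1$, which is exactly $\sigma(\boldsymbol{T}) \subseteq \bar\Omega$.

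The step I expect to carry all the weight is the reduction encapsulated in Lemma \ref{specrad}, and it is worth isolating why the result is not simply immediate from Lemma \ref{appspecT}. The obstacle is that the Taylor joint spectrum $\sigma(\boldsymbol{T})$ can be strictly larger than the joint approximate point spectrum $\sigma_\pi(\boldsymbol{T})$, so one cannot directly feed the containment $\sigma_\pi(\boldsymbol{T}) \subseteq S_\Omega$ into a statement about all of $\sigma(\boldsymbol{T})$. Lemma \ref{specrad} is precisely the device that bridges this gap: through the Hahn--Banach reduction to a single operator $A = \phi(\boldsymbol{T})$ and the classical fact that a scalar operator's spectral radius is attained on its approximate point spectrum (the topological boundary of $\sigma(A)$ lying in $\sigma_\pi(A)$), the two domain spectral radii coincide. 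Once this equality is in hand, the present proposition is a short chain of the established lemmas, and I would not expect any further technical difficulty.
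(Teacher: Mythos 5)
Your argument is correct and is essentially the paper's own proof: both reduce the claim to the bound $r_\Omega(\boldsymbol{T})\le 1$ via Lemma \ref{specrad}, and obtain that bound from Lemma \ref{appspecT} together with the fact that every point of $S_\Omega$ has spectral norm $1$. No gaps.
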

\begin{proof}
Note that for any $z \in S_\Omega,$ the spectral norm $\|z\|=1.$ By Lemma \ref{appspecT}, $r_\Omega^\pi(\boldsymbol{T}) =1$. Since by Lemma \ref{specrad}, $r_\Omega(\boldsymbol{T}) =1$, which gives  $\|z\| \leq 1$, for any $z \in \sigma(\boldsymbol{T}),$ that is, $z \in \bar{\Omega}$. This completes the proof.
\end{proof}

Any biholomorphic automorphism $\phi \in \rm{Aut}(\Omega)$ extends to an analytic function in some neighbourhood of $\bar{\Omega}$ (see \cite[page 18]{Arazy}) and we denote this extension by $\phi$ again. Moreover, $\phi$ is a homeomorphism of $\bar{\Omega}$ which maps the Shilov boundary onto the Shilov boundary of the domain $\Omega.$ For more details the reader is referred to \cite[Section 2]{Arazy} (see also \cite[Theorem 2.4]{MM}). Let $\boldsymbol{T}$ be a Cartan isometry, and let $\boldsymbol N$ be its minimal normal extension. By Proposition \ref{specT}, $\sigma(\boldsymbol{T}) \subseteq \bar{\Omega}.$ One can define $\phi(\boldsymbol{T})$ using the Riesz-Dunford functional calculus. And $\phi(\boldsymbol{N})$ is defined via continuous functional calculus. 
It is easy to verify that $\phi(\boldsymbol{N})\mathcal H \subseteq \mathcal H$ and $\phi(\boldsymbol{N})|_\mathcal{H}= \phi(\boldsymbol{T})$.
The following result shows that $\phi(\boldsymbol{T})$ is also a Cartan isometry. In case when $\Omega=\mathbb B_d$, an analogous result has been obtained for $m$-isometry in \cite[Theorem 4.3]{Gu}. 

 \begin{theorem}\label{invariance}
 	Let $\boldsymbol{T}$ be a Cartan isometry. Then for every biholomorphic automorphism $\phi \in \rm{Aut}(\Omega), \; \phi(\boldsymbol{T})$ is a Cartan isometry.
 \end{theorem}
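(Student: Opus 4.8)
The plan is to verify the intrinsic criterion of Theorem \ref{characterization} for the tuple $\phi(\boldsymbol{T})$, namely that $\Delta^{(\ell)}(z,w)(\phi(\boldsymbol{T}),\phi(\boldsymbol{T})^*)=\binom{r}{\ell}I_{\mathcal{H}}$ for every $1\le \ell\le r$. Once this operator identity is in hand, Theorem \ref{characterization} immediately returns the conclusion that $\phi(\boldsymbol{T})$ is a Cartan isometry, with subnormality coming for free from the case $\ell=1$. The computation will follow the same template as the forward direction of Theorem \ref{characterization}, with the normal extension $\phi(\boldsymbol{N})$ of $\phi(\boldsymbol{T})$ --- already constructed in the discussion preceding the statement, where it is noted that $\phi(\boldsymbol{N})\mathcal{H}\subseteq\mathcal{H}$ and $\phi(\boldsymbol{N})|_{\mathcal H}=\phi(\boldsymbol{T})$ --- playing the role that $\boldsymbol{N}$ played there.

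First I would fix $h\in\mathcal{H}$ and $1\le\ell\le r$ and expand $\langle\Delta^{(\ell)}(z,w)(\phi(\boldsymbol{T}),\phi(\boldsymbol{T})^*)h,h\rangle$ via \eqref{delk} into $\prod_{j=1}^{\ell}(1+\tfrac{a}{2}(j-1))\sum_{\alpha}\|\psi_{\alpha}^{(\ell)}(\phi(\boldsymbol{T}))h\|^2$. The key algebraic step is the composition of functional calculi: since each $\psi_{\alpha}^{(\ell)}$ is a polynomial and $\mathcal{H}$ is invariant under $\phi(\boldsymbol{N})$, one has $\psi_{\alpha}^{(\ell)}(\phi(\boldsymbol{T}))h=\psi_{\alpha}^{(\ell)}(\phi(\boldsymbol{N}))h=(\psi_{\alpha}^{(\ell)}\circ\phi)(\boldsymbol{N})h$, the last equality being the composition rule for the continuous functional calculus of the normal tuple $\boldsymbol{N}$, legitimate because $\phi$ is holomorphic, hence continuous, on $\sigma(\boldsymbol{N})\subseteq S_{\Omega}\subseteq\bar{\Omega}$. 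Writing $E$ for the spectral measure of $\boldsymbol{N}$ then converts the expression into $\int_{\sigma(\boldsymbol{N})}\Delta^{(\ell)}(\phi(z),\phi(z))\,d\langle E(z)h,h\rangle$, exactly as in the proof of Theorem \ref{characterization} but with the integrand evaluated at $\phi(z)$.

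To finish, I would invoke the two facts recalled in Section 2 and just before the statement: $\boldsymbol{T}$ being a Cartan isometry forces $\sigma(\boldsymbol{N})\subseteq S_{\Omega}$, and $\phi$ carries $S_{\Omega}$ onto $S_{\Omega}$. Hence for every $z$ in the support $\sigma(\boldsymbol{N})$ of $E$ we have $\phi(z)\in S_{\Omega}$, so Lemma \ref{Shilov} makes the integrand the constant $\binom{r}{\ell}$. The integral collapses to $\binom{r}{\ell}\langle h,h\rangle$, which gives the required identity and, through Theorem \ref{characterization}, the theorem.

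The step I expect to be the main obstacle is the rigorous justification of the functional-calculus manipulations, specifically that $\phi(\boldsymbol{N})$ is genuinely a commuting normal tuple and that the composition identity $\psi_{\alpha}^{(\ell)}(\phi(\boldsymbol{N}))=(\psi_{\alpha}^{(\ell)}\circ\phi)(\boldsymbol{N})$ holds, so that the passage to the spectral integral over $\sigma(\boldsymbol{N})$ is valid; this rests on the spectral theorem for $\boldsymbol{N}$ and the fact that $\sigma(\boldsymbol{N})$ lies where $\phi$ is analytic. An alternative route that sidesteps the hereditary calculus is to argue directly at the level of spectra: $\phi(\boldsymbol{N})$ is a normal extension of $\phi(\boldsymbol{T})$, so the minimal normal extension of $\phi(\boldsymbol{T})$ is the restriction of $\phi(\boldsymbol{N})$ to the reducing subspace $\bigvee\{\phi(\boldsymbol{N})^{*\alpha}h:h\in\mathcal{H},\,\alpha\in\mathbb{N}^d\}$, whence its Taylor joint spectrum is contained in $\sigma(\phi(\boldsymbol{N}))=\phi(\sigma(\boldsymbol{N}))\subseteq\phi(S_{\Omega})=S_{\Omega}$, and Definition \ref{defn} applies at once.
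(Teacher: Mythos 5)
Your proposal is correct and follows essentially the same route as the paper: verify the criterion of Theorem \ref{characterization} for $\phi(\boldsymbol{T})$ by passing to $\phi(\boldsymbol{N})$, expressing $\bigl\langle\Delta^{(\ell)}(z,w)(\phi(\boldsymbol T),\phi(\boldsymbol T)^*)h,h\bigr\rangle$ as the spectral integral $\int_{\sigma(\boldsymbol N)}\Delta^{(\ell)}(\phi(z),\phi(z))\,d\langle E(z)h,h\rangle$, and then using $\sigma(\boldsymbol N)\subseteq S_\Omega$, $\phi(S_\Omega)=S_\Omega$ and Lemma \ref{Shilov} to evaluate the integrand as the constant $\binom{r}{\ell}$. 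The alternative spectral-mapping route you sketch at the end is a legitimate shortcut via Definition \ref{defn}, but the main argument you give is the one the paper uses.
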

 
 \begin{proof}
 	Let $\boldsymbol{N}$ be the minimal normal extension of $\boldsymbol{T} $ on $\mathcal{K} \supseteq \mathcal{H}$ and $E$ be the spectral measure for $\boldsymbol N$.
 	For any $1 \leq \ell \leq r$ and $h \in \mathcal{H}$,
 	
 	\beqn
& &\Inp{\Delta ^{(\ell)} (z,z)\big(\phi(\boldsymbol T), \phi(\boldsymbol T)^*\big)h}{h}_{\mathcal H} \\ &\overset{\eqref{delk}} = &
 	  \prod_{j=1}^{\ell} (1+\frac{a}{2}(j-1))\big \langle \sum_{\alpha}\psi_{\alpha}^{(\ell)}(\phi(\boldsymbol{T}))^*\psi_{\alpha}^{(\ell)}(\phi(\boldsymbol{T}))h,h\big \rangle_{\mathcal H}\\
    &=&\prod_{j=1}^{\ell} (1+\frac{a}{2}(j-1))\sum_{\alpha} \inp{\psi_{\alpha}^{(\ell)}(\phi(\boldsymbol{T})) h}{\psi_{\alpha}^{(\ell)}(\phi(\boldsymbol{T}))h}_{\mathcal{H}}\\
   &=& \prod_{j=1}^{\ell} (1+\frac{a}{2}(j-1))\sum_{\alpha} \Inp{\psi_{\alpha}^{(\ell)}(\phi(\boldsymbol{N})) h}{\psi_{\alpha}^{(\ell)}(\phi(\boldsymbol{N}))h}_{\mathcal{K}}\\
 	&=& \prod_{j=1}^{\ell} (1+\frac{a}{2}(j-1))\int_{\sigma(\boldsymbol N)} \sum_{\alpha}  \psi_{\alpha}^{(\ell)} ( \phi(z)) \overline{\psi_{\alpha}^{(\ell)} ( \phi(z)) } d\inp{E(z)h}{h}_{\mathcal K} \eeqn
    Since $\phi(S_{\Omega})=S_{\Omega}$ and $\sigma(\boldsymbol N) \subseteq S_{\Omega},$ Lemma \ref{Shilov} gives
  \[ \Inp{\Delta ^{(\ell)} (z,z)\big(\phi(\boldsymbol T), \phi(\boldsymbol T)^*\big)h}{h}_{\mathcal H}=\binom{r}{\ell}\int_{\sigma(\boldsymbol N)} d\inp{E(z)h}{h}=\binom{r}{\ell} \inp{h}{h}_{\mathcal{H}}.\]
  The conclusion follows from Theorem \ref{characterization}.
 \end{proof}
 
If the orbit of the action of the group $\rm{Aut}(\Omega)$ on $\boldsymbol T$ modulo unitary equivalence is a singleton set, then we say $\boldsymbol T$ is homogeneous. It is instinctive to wonder what the homogeneous Cartan isometries might be. We address this by extending it to a much broader class known as $\mathbb K$-homogeneous, where $\mathbb K$ is a maximal compact subgroup of $\rm{Aut}(\Omega).$
For any $k \in \mathbb K,$ there is a natural action $k\cdot \boldsymbol T$ of $\mathbb K$ on a $d$-tuple of commuting bounded linear operators $\boldsymbol T.$ The $d$-tuple $\boldsymbol T$ is said to be $\mathbb{K}$-homogeneous if $k\cdot \boldsymbol T$ and $\boldsymbol T$ are unitarily equivalent for all $k$ in $\mathbb{K}$. 
In the following result, we show that, up to a unitary equivalence, the only Cartan isometry in a certain class of $\mathbb{K}$-homogeneous operators and thus for homogeneous operators is the Szeg\"o shift.

\begin{theorem} 
Let $\boldsymbol T$ be a $\mathbb{K}$-homogeneous $d$-tuple such that the joint kernel $\bigcap_{i=1}^d \ker T^*_i$ of $\boldsymbol T^*$ is a one dimensional cyclic subspace for $\boldsymbol T$ and $\Omega \subseteq \sigma_p(\boldsymbol T^*).$ Then $\boldsymbol T$ is a Cartan isometry if and only if $\boldsymbol T$ is unitarily equivalent to the Szeg\"o shift.
\end{theorem}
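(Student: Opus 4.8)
The plan is to prove the nontrivial ("only if") direction by constructing a canonical unitary that identifies $\boldsymbol T$ with the Szeg\"o shift; the converse is immediate, since the Szeg\"o shift is a Cartan isometry and this property is visibly preserved under unitary equivalence. Fix a unit vector $e_0$ spanning the one-dimensional joint kernel $\bigcap_{i=1}^d\ker T_i^*$. By hypothesis $e_0$ is cyclic, so $\{p(\boldsymbol T)e_0 : p\in\mathcal P(Z)\}$ is dense in $\mathcal H$, and the whole argument reduces to computing the single positive functional $p\mapsto \langle p(\boldsymbol T)e_0, e_0\rangle$.

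Since $\boldsymbol T$ is a Cartan isometry, it is subnormal with minimal normal extension $\boldsymbol N$ on $\mathcal K\supseteq\mathcal H$, whose spectral measure $E$ is supported on $\sigma(\boldsymbol N)\subseteq S_\Omega$. Because $e_0\in\mathcal H$ and $\boldsymbol N$ extends $\boldsymbol T$, one has $p(\boldsymbol T)e_0=p(\boldsymbol N)e_0$ for analytic polynomials $p$, and hence
\[\langle p(\boldsymbol T)e_0, q(\boldsymbol T)e_0\rangle=\langle q(\boldsymbol N)^* p(\boldsymbol N)e_0, e_0\rangle=\int_{S_\Omega}p(z)\overline{q(z)}\,d\mu(z),\]
where $\mu:=\langle E(\cdot)e_0, e_0\rangle$ is a probability measure on $S_\Omega$ of total mass $\|e_0\|^2=1$. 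Thus everything reduces to identifying $\mu$.

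The key is that $\mathbb K$-homogeneity forces $\mu$ to be $\mathbb K$-invariant. For $k\in\mathbb K$ choose a unitary $U_k$ with $U_k(k\cdot\boldsymbol T)_iU_k^*=T_i$. Since $k$ acts by an invertible linear map, $\bigcap_i\ker(k\cdot\boldsymbol T)_i^*=\bigcap_i\ker T_i^*=\mathbb C e_0$, so $U_k$ preserves this line and, after multiplying $U_k$ by a unimodular scalar, we may assume $U_ke_0=e_0$. Writing $(p\circ k)(z):=p(k\cdot z)$, the intertwining relation yields $p(\boldsymbol T)U_k=U_k(p\circ k)(\boldsymbol T)$, whence $\|p(\boldsymbol T)e_0\|=\|(p\circ k)(\boldsymbol T)e_0\|$, i.e. $\int|p|^2\,d\mu=\int|p\circ k|^2\,d\mu$ for every polynomial $p$ and every $k\in\mathbb K$. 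Polarizing and changing variables shows $\mu$ and its pushforward under $k$ agree on all products $p\overline q$; as such products are dense in $C(S_\Omega)$ by Stone--Weierstrass, $\mu$ is $\mathbb K$-invariant. By the uniqueness of the $\mathbb K$-invariant probability measure on $S_\Omega$ recalled in the Introduction, $\mu=dz$. Consequently $\langle p(\boldsymbol T)e_0, q(\boldsymbol T)e_0\rangle=\int_{S_\Omega}p\overline q\,dz=\langle p,q\rangle_{H^2(S_\Omega)}$, so $p(\boldsymbol T)e_0\mapsto p$ extends to a unitary $\mathcal H\to H^2(S_\Omega)$ (both sides having dense polynomial ranges) intertwining each $T_i$ with multiplication by $z_i$; that is, $\boldsymbol T$ is unitarily equivalent to the Szeg\"o shift.

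I expect the crux to be the invariance step: normalizing $U_k$ to fix $e_0$ (which is exactly where one-dimensionality of the joint kernel enters), verifying that $p(\boldsymbol T)U_k=U_k(p\circ k)(\boldsymbol T)$ transports the functional calculus correctly under the $\mathbb K$-action, and upgrading equality of the two measures on $\mathrm{span}\{p\overline q\}$ to equality as measures. The hypothesis $\Omega\subseteq\sigma_p(\boldsymbol T^*)$ underlies the alternative, more structural route, in which one first represents $\boldsymbol T$ as the multiplication tuple on a reproducing kernel Hilbert space with a $\mathbb K$-invariant kernel $K=\sum_{\s}c_{\s}K_{\s}$ and then uses Theorem \ref{characterization} together with the Faraut--Kor\'anyi formula \eqref{FK formula} to force $c_{\s}=(d/r)_{\s}$, i.e. $K=K^{(d/r)}$; the subnormality route above reaches the same conclusion while leaning only on the cyclicity and Shilov-spectrum inputs.
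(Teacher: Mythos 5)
Your proof is correct, but it takes a genuinely different route from the paper's. The paper's proof is a two-line reduction: it invokes \cite[Theorem 2.3]{GKP} --- this is where the hypotheses $\Omega\subseteq\sigma_p(\boldsymbol T^*)$ and the one-dimensional cyclic joint kernel are consumed --- to realize $\boldsymbol T$ as the multiplication tuple on a reproducing kernel Hilbert space with $\mathbb K$-invariant kernel $K=\sum_{\s}a_{\s}K_{\s}$, observes that the $\mathbb K$-invariance of the inner product forces the scalar spectral measure to be $\mathbb K$-invariant and supported on $S_\Omega$, and finishes by uniqueness of the $\mathbb K$-invariant probability measure. You bypass the model theorem entirely: you extract the measure $\mu=\langle E(\cdot)e_0,e_0\rangle$ directly from the minimal normal extension, derive its $\mathbb K$-invariance from $\mathbb K$-homogeneity by normalizing the intertwiners $U_k$ to fix the joint-kernel vector $e_0$ (correctly identifying this as the point where one-dimensionality of $\bigcap_i\ker T_i^*$ enters, via $\bigcap_i\ker(k\cdot\boldsymbol T)_i^*=\bigcap_i\ker T_i^*$ for linear $k$), and then conclude with the same uniqueness statement. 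Both arguments pivot on the same final fact, but yours is self-contained modulo standard subnormal-operator theory, and --- as you note --- it does not use $\Omega\subseteq\sigma_p(\boldsymbol T^*)$ in the forward direction at all, so it establishes a marginally stronger result; the price is the extra bookkeeping (normalization of $U_k$, polarization, Stone--Weierstrass applied to the span of products $p\bar q$), all of which you carry out correctly.
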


\begin{proof} It follows from \cite[Theorem 2.3]{GKP} that $\boldsymbol T$ is unitarily equivalent to the $d$-tuple $\boldsymbol {M}_z=(M_{z_1}, \ldots, M_{z_d})$ of operators of multiplication by the coordinate functions $z_1,\ldots, z_d$ on a reproducing kernel Hilbert space $\mathcal{H}(K)$ determined by the $\mathbb K$-invariant kernel $K(z,w)=\sum_{\s}a_{\s}K_{\s}(z,w)$ with $a_0=1.$ To prove the result, it suffices to show that $\boldsymbol M_z$ is the Szeg\"o shift. Let $\boldsymbol M_z$ be a Cartan isometry. Since the inner product on $\mathcal{H}(K)$ is $\mathbb K$-invariant, the scalar spectral measure associated to $\boldsymbol{M}_z$ is $\mathbb K$-invariant and supported on the Shilov boundary  $S_{\Omega}$. The proof now follows from the fact that there is a unique $\mathbb K$-invariant probability measure supported on the $S_{\Omega}.$
\end{proof}
When $\mathbb K=\mathcal U(d)$, that is, in the $\mathcal U(d)$-homogeneous case, the above result was noted in \cite[Remark 2.4]{CY}.

We end this section by presenting examples of Cartan isometries that are not $\mathbb K$ homogeneous, in particular, which are not unitarily equivalent to the Szeg\"o shift.
 \begin{example}
 Let $\mu$ be a Borel probability measure (not necessarily $\mathbb K$-invariant) supported on $S_\Omega$ of $\Omega.$ Let $\boldsymbol M_{z}=(M_{z_1},\ldots, M_{z_d})$ denote the $d$-tuple of operators of multiplication by the coordinate functions $z_1,\ldots, z_d$ on $P^2(\mu),$ the norm closure of $\{p|_{S_{\Omega}}: p \in \mathcal P(Z)\}$ in $L^2 (S_{\Omega},\mu).$ Then $\boldsymbol M_{z}$ is a Cartan isometry. Here, we construct a measure that gives rise to a Cartan isometry that is not unitarily equivalent to the Szeg\"o shift.
Consider the elementary spherical function $\varphi_{\s},$ a unique $L$-invariant polynomial in $\mathcal{P}_{\s}$ such that $\varphi_{\s}(e)=1.$ Here $L$ is the stabilizer of $e$ in $\mathbb K.$  Note that $\|\varphi_{\s}\|_{H^2(S_\Omega)}^2= \frac{1}{d_{\s}}$. 
Let $\Omega=\{z \in \mathbb{M}_{2 \times m} (\mathbb C) : I_{2}- zz^* > 0\}$ be a type-I  Cartan domain, where $m\geq 2.$ Define a probability Borel measure $\nu$ supported on $S_\Omega$ as \[\nu(\Delta):= d_{(1)}\int_{\Delta}|\varphi_{(1)}(z)|^2 dz,\] for each Borel subset $\Delta$ of $S_\Omega$. Clearly, the $d$-tuple $\boldsymbol M_{z}$ of multiplication by coordinate functions on $P^2(\nu)$ is a Cartan isometry. Since the measure $\nu$ is not $\mathbb K$-invariant, $\boldsymbol M_{z}$ cannot be unitarily equivalent to the  Szeg\"o shift. 
 \end{example}

\section{Toeplitz Operators}
Brown and Halmos in \cite[Theorem 6]{Brown-Halmos} give a characterization for an operator $X \in \mathcal{B}(H^2 (\mathbb{D}))$ to be a Toeplitz operator if and only if $T_z^* XT_z=X$, where $T_z$ is the multiplication operator by the coordinate function $z$ on $H^2(\mathbb{D})$. A suitable Brown-Halmos type characterization of Toeplitz operators on the Hardy space over polydisc $\mathbb{D}^n$ and symmetrized bidisc $\mathbb G$ is obtained in \cite{MSS} and \cite{Tiret} respectively. Davie and Jewell in \cite[Theorem 2.6]{DJ}, obtained a Brown-Halmos type characterization of Toeplitz operators on the Hardy space $H^2(\mathbb{B}_d)$ of the unit ball $\mathbb B_d$. An operator $X \in \mathcal{B_d}(H^2 (\mathbb{B}_d))$ is Toeplitz if and only if $\sum_{i=1}^dT_{z_i}^* XT_{z_i}=X,$ where $\boldsymbol T_{z}=(T_{z_1}, \ldots, T_{z_d})$ is the tuple of multiplication operators by the coordinate functions $z_1, \ldots, z_d$ on $H^2(\mathbb{B}_d)$.

In this section, we discuss Brown-Halmos type characterization for the Toeplitz operators defined on the Hardy space $H^2 (S_\Omega)$ over an irreducible bounded symmetric domain $\Omega.$ Subsequently, we generalize the notion of $\boldsymbol T$-Toeplitz operator in the context of Cartan isometry $\boldsymbol T$. We begin with the definition of Toeplitz operators.
 The {\it Toeplitz operator} $T_{\phi}$ with symbol $\phi \in L^\infty(S_\Omega)$ is defined by 
 \[T_{\phi}h:= P_{H^2(S_{\Omega})}({\phi}h), \;\; \text{ for every } h \in H^2(S_\Omega),  \]
 where $P_{H^2(S_\Omega)}: L^2(S_\Omega) \to H^2(S_\Omega)$ is the Szeg\"o projection.

For our convenience, throughout the rest of the paper, we will adopt a slight abuse of notation by denoting
$\left\{\sqrt{\prod_{j=1}^{\ell} (1+\frac{a}{2}(j-1))}\psi_{\alpha}^{(\ell)}\right\}$ as $\{\psi_{\alpha}^{(\ell)}\}$ for each $\ell=1,\ldots,r.$ With this convention, the equation \eqref{delk} can be rewritten as  \beq \label{newdelk} \Delta^{(\ell )}(z,w) =  \sum_\alpha \psi_{\alpha}^{(\ell)}(z) \overline{\psi_{\alpha}^{(\ell)}(w)}, \;\; \text{ for all } 1 \leq \ell \leq r.\eeq 
Now $\{\psi_{\alpha}^{(\ell)}\}$ in the equation \eqref{newdelk} is no longer an orthonormal basis, rather an orthogonal basis of $\mathcal P_{(\ell)}$.
Also, we denote $\boldsymbol T_{z}=(T_{z_1}, \ldots, T_{z_d})$ as the tuple of multiplication operators by the coordinate functions $z_1, \ldots, z_d$ on $H^2(S_\Omega)$ and $\boldsymbol M_z= (M_{z_1}, \ldots, M_{z_d})$ as the tuple of operators of multiplication by the co-ordinate functions $z_1,\ldots,z_d$ on $L^2(S_\Omega)$. 

The next two results have been recorded for the Hardy space $H^2 (\mathbb{B}_d)$ in \cite{DJ}. We now extend to a more general context for the Hardy space $H^2(S_\Omega).$ The idea of the proof of the next result is motivated by \cite[Lemma 2.5]{DJ}.
\begin{lemma} \label{bh1}
  Let $X \in \mathcal{B}(H^2(S_\Omega))$ be such that $\sum_\alpha \psi_\alpha^{(\ell)}(\boldsymbol{T}_z)^* X \psi_\alpha^{(\ell)}(\boldsymbol{T}_z)= \binom{r}{\ell}X $ for all $1 \leq \ell \leq r,$ where $r$ is the rank of the domain $\Omega.$  Then there exists $A \in \mathcal{B}(L^2(S_\Omega))$ such that \[\sum_\alpha \psi_\alpha^{(\ell)}(\boldsymbol{M}_z)^* A \psi_\alpha^{(\ell)}(\boldsymbol{M}_z)= \binom{r}{\ell}A, \; \; \; 1 \leq \ell \leq r,\] with $\|A\|=\|X\|$ and $X=P_{H^2(S_\Omega)}A|_{H^2(S_\Omega)}$.
\end{lemma}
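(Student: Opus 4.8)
The plan is to realize $X$ as the compression to $H^2(S_\Omega)$ of an operator $A$ on $L^2(S_\Omega)$, and to produce $A$ as a \emph{common} fixed point of a commuting family of unital completely positive maps. First I would introduce, for each $1\le \ell\le r$, the linear map
\[
\Phi_\ell(B)=\sum_\alpha \psi_\alpha^{(\ell)}(\boldsymbol M_z)^* B\, \psi_\alpha^{(\ell)}(\boldsymbol M_z),\qquad B\in\mathcal B(L^2(S_\Omega)),
\]
together with the analogous map $\Psi_\ell$ on $\mathcal B(H^2(S_\Omega))$ built from $\boldsymbol T_z$. These are finite sums since $\dim\mathcal P_{(\ell)}=d_{(\ell)}<\infty$, so both are completely positive and weak-$*$ continuous. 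Setting $\widetilde\Phi_\ell=\binom{r}{\ell}^{-1}\Phi_\ell$ and $\widetilde\Psi_\ell=\binom{r}{\ell}^{-1}\Psi_\ell$, the identity $\sum_\alpha|\psi_\alpha^{(\ell)}(z)|^2=\Delta^{(\ell)}(z,z)=\binom{r}{\ell}$ on $S_\Omega$ from Lemma \ref{Shilov}, applied through the functional calculus of the normal tuple $\boldsymbol M_z$, gives $\Phi_\ell(I_{L^2})=\binom{r}{\ell}I_{L^2}$; hence each $\widetilde\Phi_\ell$ is unital, and therefore a weak-$*$ continuous affine norm-contraction. The hypothesis on $X$ is precisely $\widetilde\Psi_\ell(X)=X$ for every $\ell$.

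The key computation I would carry out is an intertwining identity for the compression map. Writing $P=P_{H^2(S_\Omega)}$ and using that $H^2(S_\Omega)$ is invariant under each $M_{z_i}$ (so that $p(\boldsymbol M_z)P=Pp(\boldsymbol M_z)P$ and $Pp(\boldsymbol M_z)^*=Pp(\boldsymbol M_z)^*P$ for every polynomial $p$, with $p(\boldsymbol M_z)|_{H^2(S_\Omega)}=p(\boldsymbol T_z)$), I would show that for every $B\in\mathcal B(L^2(S_\Omega))$,
\[
P\,\Phi_\ell(B)\,P\big|_{H^2(S_\Omega)}=\Psi_\ell\big(PBP|_{H^2(S_\Omega)}\big).
\]
This follows termwise by inserting projections, since $P\psi_\alpha^{(\ell)}(\boldsymbol M_z)^* B\,\psi_\alpha^{(\ell)}(\boldsymbol M_z)P=\big(P\psi_\alpha^{(\ell)}(\boldsymbol M_z)^*P\big)(PBP)\big(P\psi_\alpha^{(\ell)}(\boldsymbol M_z)P\big)$. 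In words, the compression $B\mapsto PBP|_{H^2(S_\Omega)}$ intertwines $\Phi_\ell$ with $\Psi_\ell$.

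With this in hand I would form the set $\mathcal C=\{A\in\mathcal B(L^2(S_\Omega)):\|A\|\le\|X\|,\ PAP|_{H^2(S_\Omega)}=X\}$. It is convex, contains $PXP$ (hence is nonempty), and is weak-$*$ compact, being a weak-$*$ closed subset of the ball of radius $\|X\|$. Each $\widetilde\Phi_\ell$ maps $\mathcal C$ into itself: it is norm-contractive, and by the intertwining identity together with the hypothesis, $P\widetilde\Phi_\ell(A)P|_{H^2(S_\Omega)}=\widetilde\Psi_\ell\big(PAP|_{H^2(S_\Omega)}\big)=\widetilde\Psi_\ell(X)=X$. Moreover the maps $\widetilde\Phi_\ell$ pairwise commute: all the operators $\psi_\alpha^{(\ell)}(\boldsymbol M_z)$ and their adjoints are functions of the single commuting normal tuple $\boldsymbol M_z$, hence commute with one another, and reordering the resulting double sums yields $\widetilde\Phi_\ell\widetilde\Phi_m=\widetilde\Phi_m\widetilde\Phi_\ell$.

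Finally I would invoke the Markov--Kakutani fixed point theorem for the commuting family $\{\widetilde\Phi_\ell\}_{\ell=1}^{r}$ of weak-$*$ continuous affine self-maps of the nonempty weak-$*$ compact convex set $\mathcal C$, obtaining a common fixed point $A\in\mathcal C$. Then $\Phi_\ell(A)=\binom{r}{\ell}A$ for all $\ell$, together with $\|A\|\le\|X\|$ and $PAP|_{H^2(S_\Omega)}=X$, the last of which also gives $X=PA|_{H^2(S_\Omega)}$; and since compressions do not increase the norm, $\|X\|=\|PAP|_{H^2(S_\Omega)}\|\le\|A\|$, so $\|A\|=\|X\|$. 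The one genuine obstacle is producing a single $A$ that satisfies all $r$ relations simultaneously rather than each one in isolation; the resolution is the observation that the maps $\widetilde\Phi_\ell$ commute, being functions of the common normal tuple $\boldsymbol M_z$, which is exactly what makes Markov--Kakutani applicable. If one preferred to avoid the fixed point theorem, the same conclusion follows by an inductive construction, first selecting fixed points of $\widetilde\Phi_1$ in $\mathcal C$ and then, using commutativity, fixed points of $\widetilde\Phi_2$ inside that (still weak-$*$ compact convex) set, and so on up to $\widetilde\Phi_r$.
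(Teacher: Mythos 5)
Your proof is correct and follows essentially the same route as the paper: both build the normalized averaging maps $\Phi_\ell$ on $\mathcal{B}(L^2(S_\Omega))$, use their contractivity, commutativity, and compatibility with compression to $H^2(S_\Omega)$, and extract a common fixed point lying over $X$. The only difference is packaging: where you invoke the Markov--Kakutani theorem on the weak-$*$ compact convex set $\mathcal C$, the paper carries out the underlying argument by hand, taking WOT limit points of Ces\`aro averages $\frac{1}{n}\sum_{t=1}^{n}\Phi_\ell^{t}$ one $\ell$ at a time and using commutativity of the $\Phi_\ell$ to preserve the previously established fixed-point relations --- exactly the inductive alternative you mention at the end.
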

\begin{proof}

  Let $\Phi_{\ell}: \mathcal{B}(L^2(S_\Omega)) \to \mathcal{B}(L^2(S_\Omega))$ defined as $$\Phi_\ell(B):= \frac{1}{\binom{r}{\ell}}\sum_{\alpha} \psi_{\alpha}^{(\ell)} (\boldsymbol{M}_z)^*B\psi_{\alpha}^{(\ell)} (\boldsymbol{M}_z), \;\;\; 1 \leq \ell \leq r \text{ and } B \in \mathcal{B}(L^2(S_\Omega)).$$ By Lemma \ref{lemmaSC}, $\sum_{\alpha}\psi_{\alpha}^{(\ell)}(\boldsymbol{M}_z)^*\psi_{\alpha}^{(\ell)}(\boldsymbol{M}_z)=\binom{r}{\ell} I.$ Thus, we get $\|\Phi_{\ell}(B)\| \leq \|B\|.$ 
Choose an operator $\Tilde{X} \in \mathcal{B}(L^2(S_\Omega))$ such that $X=P_{H^2(S_\Omega)}\tilde{X}|_{H^2(S_\Omega)}$ and $\|\tilde{X}\|=\|X\|$.
For $n \in \mathbb N$, define $A_n^{(1)} :=\frac{1}{n}\sum_{t=1}^n \Phi_{1}^t(\tilde{X}).$ Then for any $h, g \in L^2(S_\Omega)$, we have
 \beqn
	|\inp{A_n^{(1)}h}{g}|
			\leq  \frac{1}{n}\sum_{t=1}^{n}\|\Phi_{1}^t (\tilde{X})\| \|h\| \|g\|
			\leq  \|\tilde{X}\| \|h\| \|g\|= \|X\| \|h\| \|g\|.\\		
\eeqn
Consequently, the sequence $\{\inp{A_n^{(1)} h}{g}\}$ has a convergent subsequence. By the separability of $L^2(S_\Omega)$, the sequence $\{A_n^{(1)}\}$ admits a WOT limit point $A^{(1)}$ in $\mathcal{B}(L^2(S_\Omega))$. 
A routine verification shows that $P_{H^2(S_\Omega)} A^{(1)}|_{H^2(S_\Omega)}=X$, $\sum_{\alpha}\psi_{\alpha}^{(1)}(\boldsymbol{M}_z)^* A^{(1)}\psi_{\alpha}^{(1)}(\boldsymbol{M}_z)=\binom{r}{1}A^{(1)}$ and $\|A^{(1)}\|=\|X\|$.

We now define another sequence of operators as follows: $A_n^{(2)}= \frac{1}{n}\sum_{t=1}^n \Phi_{2}^t(A^{(1)})$. Using the same approach as above, let $A^{(2)}$ be a WOT limit point of $\{A_n^{(2)}\}$, then it is easy to see that $P_{H^2(S_\Omega)} A^{(2)}|_{H^2(S_\Omega)}=X$, $\|A^{(2)}\|=\|X\|$, and $$\sum_{\alpha}\psi_{\alpha}^{(\ell)}(\boldsymbol{M}_z)^* A^{(2)}\psi_{\alpha}^{(\ell)}(\boldsymbol{M}_z)=\binom{r}{\ell}A^{(2)}\;\; \text{for } \ell=1,2.$$
If $A^{(r)}\in \mathcal{B}(L^2(S_\Omega))$ is the operator obtained after the $r$th step of the above process,
then the conclusion follows by taking $A=A^{(r)}$. 

\end{proof}
\begin{remark}
    Note that $\Phi_\ell (\Phi_{\ell'}(B))= \Phi_{\ell'}(\Phi_{\ell}(B))$, $1 \leq \ell, \ell'\leq r$ and $B \in \mathcal{B}(L^2(S_\Omega)).$ The process used in the proof of the above lemma can be started at any step $\ell \in \{1,\ldots,r\}$. And then after completing all the $r$ steps, one can get the conclusion of the lemma.
\end{remark}
The following result can be obtained by a suitable scaling in \cite[Proposition 2.4]{DJ}. Hence, we omit the proof.
\begin{proposition}\label{bh2}
    Let $(N_1,\ldots,N_d)$ be a commuting normal tuple on a Hilbert space $\mathcal{K}$ such that $\sum_{i=1}^d N_i^* N_i =rI$. If $A \in \mathcal{B}(\mathcal{K})$ satisfies $\sum_{i=1}^d N_i^* A N_i =r A$, then $A$ commutes with $N_i$ and $N_i^*$ for all $1 \leq i \leq d$.
\end{proposition}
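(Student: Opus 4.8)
The plan is to pass to the joint spectral measure of the commuting normal tuple and reduce everything to a statement about where that measure is concentrated. The cleanest quick route is the one the paper alludes to: rescaling $M_i:=N_i/\sqrt r$ produces a commuting normal tuple with $\sum_{i=1}^d M_i^*M_i=I$ and $\sum_{i=1}^d M_i^*AM_i=A$, which is exactly the hypothesis of \cite[Proposition 2.4]{DJ}; that result gives that $A$ commutes with each $M_i$ and $M_i^*$, hence with each $N_i$ and $N_i^*$. For completeness I would also give the intrinsic argument sketched below, which handles a general $r$ directly and never needs the scaling.

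First I would set up the spectral picture. By the Fuglede--Putnam theorem the commuting normal family $\{N_i\}$ generates an abelian von Neumann algebra, so there is a joint projection-valued measure $E$ on $\mathbb C^d$ with $N_i=\int z_i\,dE(z)$ and $N_i^*=\int \bar z_i\,dE(z)$. The relation $\sum_i N_i^*N_i=rI$ forces $E$ to be supported on the Euclidean sphere $S_r:=\{z\in\mathbb C^d:\sum_i|z_i|^2=r\}$. Since $A$ commutes with every $N_i$ and $N_i^*$ if and only if $A$ commutes with the spectral measure, i.e. $E(\omega)A=AE(\omega)$ for every Borel set $\omega$, the whole statement reduces to showing $E(\sigma)\,A\,E(\tau)=0$ for all disjoint Borel sets $\sigma,\tau\subseteq S_r$.

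The heart of the matter is a computation with a double spectral measure. For $h,g\in\mathcal K$ put $\Lambda_{h,g}(\sigma\times\tau):=\langle A\,E(\tau)h,\,E(\sigma)g\rangle$. Testing the hypothesis on the vectors $u=\gamma(\boldsymbol N)h$ and $v=\beta(\boldsymbol N)g$, for arbitrary bounded Borel functions $\gamma,\beta$, and using $N_i\,\gamma(\boldsymbol N)h=(z_i\gamma)(\boldsymbol N)h$, the identity $\sum_i N_i^*AN_i=rA$ becomes
\[ \int_{S_r\times S_r}\big(\langle z',z\rangle-r\big)\,\gamma(z')\,\overline{\beta(z)}\,d\Lambda_{h,g}(z,z')=0, \]
where $\langle z',z\rangle=\sum_i z_i'\bar z_i$. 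As $\gamma,\beta$ range over all bounded Borel functions the products $\gamma(z')\overline{\beta(z)}$ are total, so this says that the complex measure $\big(\langle z',z\rangle-r\big)\,d\Lambda_{h,g}$ vanishes identically.

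Finally I would read off the conclusion from the geometry of the sphere. On $S_r\times S_r$ the Cauchy--Schwarz inequality gives $|\langle z',z\rangle|\le\|z'\|\,\|z\|=r$, with $\langle z',z\rangle=r$ exactly when $z'=z$; hence $\langle z',z\rangle-r$ vanishes only on the diagonal, and the previous step forces $\Lambda_{h,g}$ to be concentrated on $\{z=z'\}$. Taking $h\in\operatorname{ran}E(\tau)$ and $g\in\operatorname{ran}E(\sigma)$ with $\sigma\cap\tau=\emptyset$ makes $\Lambda_{h,g}$ supported on $\sigma\times\tau$, which is disjoint from the diagonal, so $\Lambda_{h,g}\equiv 0$ and therefore $E(\sigma)\,A\,E(\tau)=0$. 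This yields $AE(\omega)=E(\omega)A$ for all $\omega$, completing the argument. The one delicate point I would watch is the passage from the single operator identity to the whole family of scalar equations above: it is precisely the substitution $u=\gamma(\boldsymbol N)h,\ v=\beta(\boldsymbol N)g$ that manufactures enough test functions to annihilate the off-diagonal part of $\Lambda_{h,g}$, and one must invoke Fuglede--Putnam at the outset to be sure a genuine joint spectral measure on $S_r$ exists.
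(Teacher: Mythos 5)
Your primary route --- rescaling $M_i:=N_i/\sqrt{r}$, which turns the hypotheses into $\sum_{i=1}^d M_i^*M_i=I$ and $\sum_{i=1}^d M_i^*AM_i=A$, and then citing \cite[Proposition 2.4]{DJ} --- is exactly the paper's proof: the authors state that the result ``can be obtained by a suitable scaling'' in that reference and omit all further argument, so on this point your proposal and the paper coincide.

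Your supplementary ``intrinsic'' argument, however, has a genuine gap at the step where you pass from the vanishing of $\int(\langle z',z\rangle-r)\,\gamma(z')\overline{\beta(z)}\,d\Lambda_{h,g}$ for all bounded Borel $\gamma,\beta$ to the assertion that $\Lambda_{h,g}$ is concentrated on the diagonal. The set function $\Lambda_{h,g}(\sigma\times\tau)=\langle AE(\tau)h,E(\sigma)g\rangle$ is only a bimeasure: it is separately countably additive, and one may integrate finite sums of product functions against it (which is why your identity is meaningful, $\langle z',z\rangle-r$ being such a sum), but for a general bounded $A$ it need not extend to a countably additive complex measure on the product $\sigma$-algebra. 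Consequently neither ``support'' nor division by $\langle z',z\rangle-r$ is available: the reciprocal $1/(\langle z',z\rangle-r)$ is not a finite sum of products, and approximating it by such sums is controlled only in the projective tensor norm, not the sup norm, so the limit cannot be taken. The conclusion can still be salvaged from your identity: fix disjoint compact $\sigma,\tau\subseteq S_r$, let $\mu_h=\langle E(\cdot)h,h\rangle$ and $\mu_g=\langle E(\cdot)g,g\rangle$, and observe that the bounded operator $T:L^2(\tau,\mu_h)\to L^2(\sigma,\mu_g)$ induced by $(\gamma,\beta)\mapsto\langle A\gamma(\boldsymbol N)h,\beta(\boldsymbol N)g\rangle$ satisfies $\tfrac1r\sum_i M_{\bar z_i}TM_{z_i}=T$; the elementary operator $\mathcal L(T)=\tfrac1r\sum_i M_{\bar z_i}TM_{z_i}$ has spectrum contained in $\{\langle b,a\rangle/r:\ a\in\sigma,\ b\in\tau\}$, which is a compact subset of the closed unit disc omitting $1$ precisely because of your Cauchy--Schwarz observation, so $I-\mathcal L$ is invertible and $T=0$. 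As written, though, the measure-theoretic shortcut does not go through, and you should either repair it along these lines or rely solely on the scaling-plus-citation argument, as the paper does.
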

The following result is a several variable analog of \cite[Corollary 12.7]{Conway}. We provide a proof here for the sake of completeness.
\begin{lemma}\label{bh3}
    Let $\mu$ be a finite positive regular Borel measure on a compact set $K \subseteq \mathbb{C}^d$. Let $A \in \mathcal{B}(L^2(\mu))$ commute with each multiplication operator $M_{z_i},\;i=1,\ldots,d$. Then there exists a function $\phi \in L^\infty (\mu)$ such that $A=M_\phi$.  
\end{lemma}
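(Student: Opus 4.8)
The plan is to produce the symbol explicitly as $\phi := A\mathbf{1}$, where $\mathbf{1}\in L^2(\mu)$ is the constant function $1$, and then to upgrade the identity $Aq = q\phi$ from a dense subspace to all of $L^2(\mu)$. The first, and conceptually decisive, step is to notice that although the hypothesis gives only commutation of $A$ with the holomorphic multiplications $M_{z_i}$, each $M_{z_i}$ is normal on $L^2(\mu)$ with $M_{z_i}^* = M_{\bar z_i}$ (the coordinate functions are bounded on the compact set $K$). Hence Fuglede's theorem applies and yields $A M_{\bar z_i} = M_{\bar z_i} A$ for every $i$. This is essential, since the holomorphic polynomials by themselves need not be dense in $L^2(\mu)$; once $A$ commutes with all the $M_{\bar z_i}$ as well, it commutes with $M_q$ for every $q$ in the $*$-subalgebra generated by $z_1,\bar z_1,\dots,z_d,\bar z_d$, and by the Stone--Weierstrass theorem this algebra is uniformly dense in $C(K)$, which in turn is dense in $L^2(\mu)$.

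Granting this, for any such $q$ we compute $Aq = A M_q \mathbf{1} = M_q A \mathbf{1} = q\phi$, and a uniform-approximation argument extends the identity $Aq = q\phi$ to all $q\in C(K)$: if $q_n\to q$ uniformly with each $q_n$ in the $*$-algebra, then $Aq_n\to Aq$ by boundedness of $A$, while $q_n\phi\to q\phi$ in $L^2$ because $\|q_n\phi - q\phi\|_2 \le \|q_n - q\|_\infty\,\|\phi\|_2$. At this point $\phi$ is known only to lie in $L^2(\mu)$, so it remains both to promote it to $L^\infty(\mu)$ and to pass from $C(K)$ to all of $L^2(\mu)$.

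The main obstacle is precisely this $L^2$-to-$L^\infty$ promotion. For every $q\in C(K)$ one has the estimate $\int_K |q|^2|\phi|^2\,d\mu = \|q\phi\|_2^2 = \|Aq\|_2^2 \le \|A\|^2\,\|q\|_2^2 = \|A\|^2\int_K |q|^2\,d\mu$. Introducing the finite measure $d\nu := |\phi|^2\,d\mu$, this reads $\int |q|^2\,d\nu \le \|A\|^2 \int |q|^2\,d\mu$ for all $q\in C(K)$. Using the density of $C(K)$ in $L^2(\mu+\nu)$ (valid since $\mu+\nu$ is a finite regular Borel measure on the compact set $K$), I would approximate $\chi_E$ for an arbitrary Borel set $E$ and pass to the limit to obtain $\nu(E)\le \|A\|^2\mu(E)$, that is, $\int_E |\phi|^2\,d\mu \le \|A\|^2\mu(E)$. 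Taking $E=\{\,|\phi|>\|A\|+\varepsilon\,\}$ then forces $\mu(E)=0$, so $\|\phi\|_\infty\le\|A\|$ and $\phi\in L^\infty(\mu)$. Finally $M_\phi$ is bounded, the bounded operators $A$ and $M_\phi$ agree on the dense subspace $C(K)$, and therefore $A=M_\phi$, which completes the argument.
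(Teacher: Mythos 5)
Your proof is correct, and it takes a genuinely different route from the paper's after the common first step. Both arguments open with Fuglede to pass from commutation with the $M_{z_i}$ to commutation with the $M_{\bar z_i}$ (and you are right that this is the essential move, since the analytic polynomials alone need not be dense in $L^2(\mu)$). From there the paper goes the operator-algebraic way: it upgrades commutation with polynomials in $z,\bar z$ to commutation with every $M_\varphi$, $\varphi\in L^\infty(\mu)$, using weak* density of such polynomials in $L^\infty(\mu)$, and then invokes the fact that $\{M_\varphi:\varphi\in L^\infty(\mu)\}$ is a maximal abelian von Neumann algebra, hence equal to its own commutant; the symbol $\phi$ is never exhibited. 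You instead construct $\phi=A\mathbf{1}$ explicitly, verify $Aq=q\phi$ on the $*$-algebra and extend to $C(K)$ by uniform approximation, and then do the $L^2$-to-$L^\infty$ promotion by hand via the comparison of measures $|\phi|^2\,d\mu\le\|A\|^2\,d\mu$ on Borel sets. Your version is longer but self-contained (it replaces three citations to Douglas with elementary measure theory) and yields the quantitative bound $\|\phi\|_\infty\le\|A\|$ as a byproduct, which the paper's argument also gives implicitly but does not display; the paper's version is shorter and generalizes immediately to any situation where one already knows the relevant algebra is maximal abelian. All the delicate points in your argument check out: $C(K)$ is dense in $L^2(\mu+\nu)$ because every finite Borel measure on a compact subset of $\mathbb{C}^d$ is regular, and convergence $q_n\to\chi_E$ in $L^2(\mu+\nu)$ does give $\int|q_n|^2\,d\nu\to\nu(E)$ and $\int|q_n|^2\,d\mu\to\mu(E)$, so the inequality passes to the limit as you claim.
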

\begin{proof}
 Let $A \in \mathcal{B}(L^2 (\mu))$ commutes with each $M_{z_i}$ then by Fuglede-Putnam theorem,  $A M_{p}= M_{p}A$ for any polynomial $p \in \mathbb C[z, \bar{z}]$. As $K$ is compact, Hausdorff and $\mu$ is regular, by \cite[Corollary 4.53]{Doug}, polynomials in $z$ and $\bar{z}$ are weak* dense in $L^\infty(\mu)$. Therefore,  $A M_\varphi = M_\varphi A$ for all $\varphi \in L^\infty(\mu).$ Since $L^\infty(\mu)$ is a maximal abelian von-Neumann algebra (see  \cite[Proposition 4.50]{Doug}), by \cite[Proposition 4.62]{Doug}, the commutant of $\{M_\varphi: \varphi \in L^\infty(\mu)\}$ coincides with itself. This completes the proof.
\end{proof}
We now give a Brown-Halmos type characterization for the Toeplitz operator on the Hardy space $H^2(S_\Omega).$
\begin{theorem}\label{toeplitz}
Let $\boldsymbol T_{z}=(T_{z_1}, \ldots, T_{z_d})$ be the tuple of multiplication operators by the coordinate functions $z_1, \ldots, z_d$ on $H^2(S_\Omega).$
A necessary and sufficient condition for an operator $X$ on $H^2(S_\Omega)$ to be a Toeplitz operator is that \beq \label{b-h}\sum_\alpha \psi_\alpha^{(\ell)}(\boldsymbol{T}_z)^* X \psi_\alpha^{(\ell)}(\boldsymbol{T}_z)= \binom{r}{\ell}X,\; \; \; 1 \leq \ell \leq r,\eeq
 where $\{\psi_{\alpha}^{(\ell)}\}$ is the same as that appearing in the equation \eqref{newdelk}.
\end{theorem}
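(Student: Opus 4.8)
The plan is to prove the two implications separately: necessity by a direct computation that pushes everything down to $L^2(S_\Omega)$, and sufficiency by lifting $X$ to $L^2(S_\Omega)$ and feeding the result into the preparatory lemmas. Throughout I would write $P$ for the Szeg\"o projection $P_{H^2(S_\Omega)}$ and use that each $\psi_\alpha^{(\ell)}$ is an analytic polynomial, so $\psi_\alpha^{(\ell)}(\boldsymbol{T}_z)$ is the analytic Toeplitz operator $T_{\psi_\alpha^{(\ell)}} = M_{\psi_\alpha^{(\ell)}}|_{H^2(S_\Omega)}$ and $\psi_\alpha^{(\ell)}(\boldsymbol{M}_z) = M_{\psi_\alpha^{(\ell)}}$. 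The key scalar fact, from Lemma \ref{Shilov}, is that on $S_\Omega$ one has $\sum_\alpha |\psi_\alpha^{(\ell)}(z)|^2 = \Delta^{(\ell)}(z,z) = \binom{r}{\ell}$, so that $\sum_\alpha M_{\psi_\alpha^{(\ell)}}^* M_{\psi_\alpha^{(\ell)}} = \binom{r}{\ell} I$ on $L^2(S_\Omega)$.

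For necessity I would take $X = T_\phi$ with $\phi \in L^\infty(S_\Omega)$ and exploit that the co-analytic Toeplitz operator $T_{\psi_\alpha^{(\ell)}}^* = P M_{\overline{\psi_\alpha^{(\ell)}}}|_{H^2(S_\Omega)}$ leaves $\big(H^2(S_\Omega)\big)^{\perp}$ invariant, which gives the identity $P M_{\overline{\psi_\alpha^{(\ell)}}}(I - P) = 0$. Combined with $T_\phi = P M_\phi$, this lets me drop the interior projection and obtain, for $h \in H^2(S_\Omega)$, the relation $T_{\psi_\alpha^{(\ell)}}^* T_\phi T_{\psi_\alpha^{(\ell)}} h = P M_{\overline{\psi_\alpha^{(\ell)}}} M_\phi M_{\psi_\alpha^{(\ell)}} h$. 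Since all multiplication operators on $L^2(S_\Omega)$ commute, summing over $\alpha$ collapses the middle factor via $\sum_\alpha M_{\overline{\psi_\alpha^{(\ell)}}} M_\phi M_{\psi_\alpha^{(\ell)}} = M_\phi \sum_\alpha M_{\psi_\alpha^{(\ell)}}^* M_{\psi_\alpha^{(\ell)}} = \binom{r}{\ell} M_\phi$, whence $\sum_\alpha T_{\psi_\alpha^{(\ell)}}^* T_\phi T_{\psi_\alpha^{(\ell)}} = \binom{r}{\ell} T_\phi$, which is exactly \eqref{b-h}.

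For sufficiency I would assume \eqref{b-h} holds and apply Lemma \ref{bh1} to obtain $A \in \mathcal{B}(L^2(S_\Omega))$ with $X = P A|_{H^2(S_\Omega)}$ and $\sum_\alpha M_{\psi_\alpha^{(\ell)}}^* A M_{\psi_\alpha^{(\ell)}} = \binom{r}{\ell} A$ for every $\ell$. I would then only need the case $\ell = 1$, where $\psi_i^{(1)}(z) = z_i$: this reads $\sum_{i=1}^d M_{z_i}^* A M_{z_i} = r A$, and since $\sum_{i=1}^d M_{z_i}^* M_{z_i} = rI$ on $L^2(S_\Omega)$, Proposition \ref{bh2} forces $A$ to commute with each $M_{z_i}$ and $M_{z_i}^*$. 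Lemma \ref{bh3} then yields $\phi \in L^\infty(S_\Omega)$ with $A = M_\phi$, so $X = P M_\phi|_{H^2(S_\Omega)} = T_\phi$ is Toeplitz.

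I expect the main obstacle to be the necessity step, precisely the justification for removing the interior projection in $T_{\psi_\alpha^{(\ell)}}^* T_\phi T_{\psi_\alpha^{(\ell)}}$. That reduction rests on the invariance $M_{\overline{\psi_\alpha^{(\ell)}}}\big(H^2(S_\Omega)\big)^{\perp} \subseteq \big(H^2(S_\Omega)\big)^{\perp}$, which in turn uses the analyticity of $\psi_\alpha^{(\ell)}$; once this is secured, the remainder is bookkeeping with commuting multiplications and the boundary identity $\sum_\alpha |\psi_\alpha^{(\ell)}|^2 = \binom{r}{\ell}$ on $S_\Omega$. On the sufficiency side the heavy lifting is already done by Lemmas \ref{bh1} and \ref{bh3} and Proposition \ref{bh2}; the only point worth flagging is that a single relation, the one indexed by $\ell = 1$, is enough to run the commutant argument, so the higher-$\ell$ relations are not needed there (they are, of course, recovered automatically through necessity).
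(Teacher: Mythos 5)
Your proposal is correct and follows essentially the same route as the paper: the sufficiency direction is word-for-word the paper's argument (Lemma \ref{bh1}, then only the $\ell=1$ relation fed into Proposition \ref{bh2} and Lemma \ref{bh3}), and your necessity computation, though phrased via the operator identity $P_{H^2(S_\Omega)}M_{\overline{\psi_\alpha^{(\ell)}}}(I-P_{H^2(S_\Omega)})=0$ rather than the paper's inner-product manipulation, rests on exactly the same two facts, namely that the interior projection can be dropped against analytic factors and that $\sum_\alpha|\psi_\alpha^{(\ell)}|^2=\Delta^{(\ell)}(z,z)=\binom{r}{\ell}$ on $S_\Omega$ by Lemma \ref{Shilov}.
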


\begin{proof}
Let $X=T_{\phi}$ be a Toeplitz operator for some $\phi \in  L^\infty(S_\Omega).$ For any $1 \leq \ell \leq r$ and $ h, g \in H^2(S_\Omega)$, 
\beqn 
\Inp{\sum_{\alpha}\psi_{\alpha}^{(\ell)}(\boldsymbol{T}_z)^*X\psi_{\alpha}^{(\ell)}(\boldsymbol{T}_z)h}{g}_{H^2(S_\Omega)} &=& \sum_{\alpha} \Inp{X \psi_{\alpha}^{(\ell)}(\boldsymbol{T}_z)h}{\psi_{\alpha}^{(\ell)}(\boldsymbol{T}_z)g}_{H^2(S_\Omega)}\\
&=& \sum_{\alpha} \Inp{\phi \psi_{\alpha}^{(\ell)}h}{\psi_{\alpha}^{(\ell)}g}_{L^2(S_\Omega)}\\
&\overset{\eqref{newdelk}}=& \int_{S_\Omega} \Delta^{(\ell)}(z,z) \phi(z)h(z) \overline{g(z)}dz\\
&=& \binom{r}{\ell} \Inp {\phi h}{g}_{L^2(S_\Omega)}= \binom{r}{\ell}\Inp{Xh}{g}_{H^2(S_\Omega)}.\\
\eeqn	
Conversely, suppose that $X \in \mathcal{B}(H^2(S_{\Omega}))$ satisfies the conditions given in equation \eqref{b-h}. Then, by Lemma \ref{bh1}, there exists a bounded operator $A$ on $L^2(S_\Omega)$ such that $X = P_{H^2(S_\Omega)}A|_{H^2(S_\Omega)}$ and \[\sum_\alpha \psi_\alpha^{(\ell)}(\boldsymbol{M}_z)^* A \psi_\alpha^{(\ell)}(\boldsymbol{M}_z)= \binom{r}{\ell}A ,\; \; 1 \leq \ell \leq r.\]
In particular, for $\ell=1$, we have $\sum_{i=1}^d M_{z_i}^* A M_{z_i}=rA$. By Proposition \ref{bh2}, $A$ commutes with each $M_{z_i}$. Thus, it follows from Lemma \ref{bh3} that $A=M_\phi$ for some $\phi \in L^\infty(S_\Omega)$. Now the conclusion is immediate. 

\end{proof}
A careful observation of the proofs of Lemma \ref{bh1} and Theorem \ref{toeplitz} yields the following sufficient condition for Toeplitz operators on $H^2(S_\Omega).$
\begin{corollary} \label{rem1}
If an operator $X\in \mathcal{B}(H^2(S_\Omega))$ satisfies $\sum_{i=1}^d T_{z_i}^*XT_{z_i}=rX$, then $X$ is a Toeplitz operator.
\end{corollary}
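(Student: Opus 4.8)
The plan is to trace through the proofs of Lemma \ref{bh1} and Theorem \ref{toeplitz} and observe that only the $\ell=1$ instance of \eqref{b-h} is ever genuinely used, so the single hypothesis here suffices. The starting point is to identify the given relation $\sum_{i=1}^d T_{z_i}^* X T_{z_i} = rX$ with the $\ell = 1$ case of \eqref{b-h}. Indeed, for $\ell = 1$ we have $(\ell) = (1,0,\ldots,0)$ and $\Delta^{(1)}(z,w) = \inp{z}{w} = \sum_{i=1}^d z_i \overline{w_i}$, so under the convention \eqref{newdelk} the family $\{\psi_\alpha^{(1)}\}$ is precisely the tuple of coordinate functions $z_1, \ldots, z_d$. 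Hence $\sum_\alpha \psi_\alpha^{(1)}(\boldsymbol T_z)^* X \psi_\alpha^{(1)}(\boldsymbol T_z) = \sum_{i=1}^d T_{z_i}^* X T_{z_i}$, and the right-hand side $\binom{r}{1} X = rX$ matches exactly.

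Next I would run only the first step of the iterative construction in the proof of Lemma \ref{bh1}. Fix a norm-preserving extension $\tilde{X} \in \mathcal B(L^2(S_\Omega))$ of $X$ with $X = P_{H^2(S_\Omega)} \tilde{X}|_{H^2(S_\Omega)}$, and form the Ces\`aro means $A_n^{(1)} = \frac{1}{n} \sum_{t=1}^n \Phi_1^t(\tilde{X})$ of the contractive map $\Phi_1$. A WOT limit point $A := A^{(1)}$ then satisfies $\Phi_1(A) = A$, that is $\sum_{i=1}^d M_{z_i}^* A M_{z_i} = rA$, together with $\|A\| = \|X\|$; these two facts are purely consequences of the contractivity of $\Phi_1$ (guaranteed by $\sum_\alpha \psi_\alpha^{(1)}(\boldsymbol M_z)^* \psi_\alpha^{(1)}(\boldsymbol M_z) = rI$ via Lemma \ref{lemmaSC}) and of the mean-ergodic averaging, and do not require any hypothesis on $X$. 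The identity $P_{H^2(S_\Omega)} A|_{H^2(S_\Omega)} = X$ is where the hypothesis enters: since each $\psi_\alpha^{(1)}(\boldsymbol M_z)$ preserves $H^2(S_\Omega)$, compressing $\Phi_1(\tilde{X})$ to $H^2(S_\Omega)$ reproduces $\frac{1}{r}\sum_{i=1}^d T_{z_i}^* X T_{z_i} = X$, and iterating gives $P_{H^2(S_\Omega)} \Phi_1^t(\tilde{X})|_{H^2(S_\Omega)} = X$ for every $t$, hence the same for the averages and for their WOT limit. Crucially, this first step uses only the $\ell = 1$ relation; the later steps of Lemma \ref{bh1}, which install the relations for $2 \le \ell \le r$, are simply skipped.

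With $A$ in hand, the conclusion follows exactly as in the converse direction of Theorem \ref{toeplitz}, and again only the $\ell=1$ relation is needed there. The tuple $\boldsymbol M_z$ is a commuting normal tuple on $L^2(S_\Omega)$ with $\sum_{i=1}^d M_{z_i}^* M_{z_i} = rI$, so Proposition \ref{bh2} applied to $A$ (which satisfies $\sum_{i=1}^d M_{z_i}^* A M_{z_i} = rA$) yields that $A$ commutes with each $M_{z_i}$ and $M_{z_i}^*$. Lemma \ref{bh3}, with $\mu$ the $\mathbb K$-invariant probability measure on the compact set $S_\Omega$, then produces $\phi \in L^\infty(S_\Omega)$ with $A = M_\phi$. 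Finally $X = P_{H^2(S_\Omega)} M_\phi|_{H^2(S_\Omega)} = T_\phi$, so $X$ is a Toeplitz operator.

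I do not anticipate a genuine obstacle, since the argument is essentially a bookkeeping restriction of the existing proofs. The one point that must be checked with care is that the $\ell=1$ hypothesis alone suffices to produce the $L^2$-extension $A$ with $\Phi_1(A)=A$ and the compression identity $P_{H^2(S_\Omega)}A|_{H^2(S_\Omega)}=X$ — in particular that the averaging step, the WOT-limit step, and the commutativity conclusion of Proposition \ref{bh2} never invoke the higher relations $2 \le \ell \le r$.
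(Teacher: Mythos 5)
Your proposal is correct and is exactly the argument the paper intends: the paper dispenses with the proof by noting that ``a careful observation of the proofs of Lemma \ref{bh1} and Theorem \ref{toeplitz}'' yields the result, and your write-up simply makes explicit that only the first step of the averaging construction (producing $A^{(1)}$ with $\sum_i M_{z_i}^*A^{(1)}M_{z_i}=rA^{(1)}$ and $P_{H^2(S_\Omega)}A^{(1)}|_{H^2(S_\Omega)}=X$) and the $\ell=1$ relation in the converse of Theorem \ref{toeplitz} (via Proposition \ref{bh2} and Lemma \ref{bh3}) are ever used. No gap; same route as the paper, just written out.
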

Brown and Halmos observed that every compact Toeplitz operator on the Hardy space of the unit disc is the zero operator (see the Corollary on page 94 of \cite{Brown-Halmos}). This conclusion for the polydisc and the unit ball follows from \cite[Theorem 3.2]{MSS} and \cite[Theorem 3.3]{DE}, respectively. In the following result, we show that the same conclusion holds for the classical Cartan domains.
\begin{theorem} \label{ctoperator}
The only compact Toeplitz operator on the Hardy space $H^2(S_\Omega)$ is the zero operator.
\end{theorem}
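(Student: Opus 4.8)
The plan is to collapse the full Brown--Halmos system of Theorem \ref{toeplitz} to its lowest level $\ell=1$. Since $\Delta^{(1)}(z,w)=\langle z,w\rangle$, the orthogonal family $\{\psi_\alpha^{(1)}\}$ is just a rescaling of the coordinate functions, so the $\ell=1$ identity reads $\sum_{i=1}^d T_{z_i}^* X T_{z_i}=rX$. Writing $\Phi(Y):=\tfrac1r\sum_{i=1}^d T_{z_i}^* Y T_{z_i}$, this says exactly $\Phi(X)=X$; moreover, because $\boldsymbol T_z$ is a Cartan isometry we have $\sum_i T_{z_i}^*T_{z_i}=rI$ (the $\ell=1$ case of Theorem \ref{characterization}), so $\Phi$ is unital, $\Phi(I)=I$. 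As the $T_{z_i}$ commute, iterating yields the multinomial expansion
\[
\Phi^n(Y)=\frac{1}{r^n}\sum_{|\alpha|=n}\binom{n}{\alpha}\boldsymbol T_z^{*\alpha}\,Y\,\boldsymbol T_z^{\alpha},
\]
and in particular $\Phi^n(X)=X$ and $\Phi^n(I)=I$ for every $n$.

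Now fix $h\in H^2(S_\Omega)$. For each multi-index $\alpha$ with $\boldsymbol T_z^\alpha h\neq0$ put $u_\alpha:=\boldsymbol T_z^\alpha h/\|\boldsymbol T_z^\alpha h\|$ and $w_\alpha:=r^{-n}\binom{n}{\alpha}\|\boldsymbol T_z^\alpha h\|^2$. Testing $\Phi^n(I)=I$ against $h$ gives $\sum_{|\alpha|=n}w_\alpha=\|h\|^2$, while $\Phi^n(X)=X$ gives
\[
\langle Xh,h\rangle=\langle\Phi^n(X)h,h\rangle=\sum_{|\alpha|=n}w_\alpha\,\langle Xu_\alpha,u_\alpha\rangle .
\]
The key structural observation is that the $u_\alpha$ live in ever higher frequencies. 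By the Peter--Weyl/Faraut--Koranyi grading $H^2(S_\Omega)=\overline{\bigoplus_{m\ge0}H_m}$ into homogeneous polynomials $H_m$ of degree $m$, multiplication by $\boldsymbol T_z^\alpha$ raises degree by $|\alpha|$, so $u_\alpha\in\mathcal M_n:=\overline{\bigoplus_{m\ge n}H_m}$ whenever $|\alpha|=n$. The subspaces $\mathcal M_n$ decrease to $\{0\}$, hence the orthogonal projections $P_{\mathcal M_n}$ tend to $0$ in the strong operator topology.

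It remains to invoke compactness of $X$. Given $\epsilon>0$, choose a finite-rank $F=\sum_{j=1}^N\langle\cdot,a_j\rangle b_j$ with $\|X-F\|<\epsilon$. For a unit vector $u\in\mathcal M_n$ one has $\langle u,a_j\rangle=\langle u,P_{\mathcal M_n}a_j\rangle$, whence
\[
|\langle Fu,u\rangle|\le\sum_{j=1}^N\|P_{\mathcal M_n}a_j\|\,\|b_j\|,
\]
which tends to $0$ as $n\to\infty$, uniformly over the unit sphere of $\mathcal M_n$. Combining with $\|X-F\|<\epsilon$ gives $\limsup_{n}\sup_{u\in\mathcal M_n,\,\|u\|=1}|\langle Xu,u\rangle|\le\epsilon$, and since $\epsilon$ is arbitrary, $\sup_{u\in\mathcal M_n,\,\|u\|=1}|\langle Xu,u\rangle|\to0$. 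Feeding this into the weighted-average identity, together with $\sum_\alpha w_\alpha=\|h\|^2$, forces $\langle Xh,h\rangle=0$ for every $h\in H^2(S_\Omega)$; over a complex Hilbert space the vanishing of the entire numerical range yields $X=0$ by polarization. The genuinely essential steps are the frequency-grading observation placing every $u_\alpha$ in $\mathcal M_n$ and the consequent $P_{\mathcal M_n}\to0$; once these are secured the compactness estimate is routine. I expect the only delicate point to be recognizing that the $\ell=1$ relation alone (equivalently Corollary \ref{rem1}) already drives the whole argument, so the higher-$\ell$ relations play no role here.
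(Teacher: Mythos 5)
Your proof is correct, but it takes a genuinely different route from the paper. The paper argues through the symbol: it shows the normalized Szeg\"o kernels $g_z$ tend weakly to $0$ as $z\to\xi\in S_\Omega$ (via Lemma \ref{Shilov}, since $\Delta(z,z)\to 0$), so compactness forces the Berezin transform $\widetilde{T}_\phi(z)=\langle T_\phi g_z,g_z\rangle=P[\phi](z)$ to vanish at the Shilov boundary; it then transports the Poisson transform to the generalized upper half-plane by the Cayley transform and invokes Weiss's almost-everywhere boundary convergence theorem to conclude $\phi=0$ a.e. Your argument never touches the symbol: you use only the $\ell=1$ Brown--Halmos relation, iterate the unital map $\Phi$, and exploit the orthogonal degree grading of $H^2(S_\Omega)$ (orthogonality of distinct homogeneous components holds because the circle action $z\mapsto e^{i\theta}z$ lies in $\mathbb K$ and preserves $dz$, or by Schur orthogonality applied to the Peter--Weyl decomposition) to push the numerical range of $X$ into the subspaces $\mathcal M_n$, where compactness annihilates it. All the steps check out: the multinomial expansion of $\Phi^n$ is valid since the $T_{z_i}$ commute, $T_{z^\alpha}h=z^\alpha h$ indeed lies in $\mathcal M_{|\alpha|}$, and $P_{\mathcal M_n}\to 0$ strongly because polynomials are dense. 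You in fact prove the stronger statement that the only compact operator satisfying $\sum_{i=1}^d T_{z_i}^*XT_{z_i}=rX$ is zero; this is essentially a direct, self-contained execution of the alternative route the authors themselves note (deducing the theorem from the emptiness of the point spectrum of the Szeg\"o shift, in the spirit of Didas--Eschmeier). What the paper's approach buys is the identification of the Berezin transform with the Poisson integral and the recovery of the symbol at the boundary; what yours buys is elementarity --- no Cayley transform, no Poisson kernel, no a.e.\ convergence theorem --- and, as you correctly observe, the higher-$\ell$ relations play no role.
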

To prove the above result, we follow the approach of the Berezin transform used in \cite[Section 1]{Englis}.
Before presenting the proof, we first recall a few results related to the generalized Cayley transform.
Let $c$ denote the generalized Cayley transform acting on the Cartan factor $Z$, then $c$ maps the domain $\Omega$ biholomorphically onto a generalized upper half plane $\mathcal{D}$ (see \cite[Theorem 6.8]{K-W}). Let $\Sigma$ be the Shilov boundary of $\mathcal{D}$ and $\beta$ be the associated normalized measure supported on $\Sigma$. By \cite[Theorem 6.9]{K-W}, $c^{-1}(\Sigma)$ is a dense open subset of $S_\Omega$. Therefore,  the set $S_\Omega \setminus c^{-1}(\Sigma)$ is of measure zero with respect to the unique $\mathbb K$-invariant measure $dz$ on $S_{\Omega}.$ Consequently, $f \in L^\infty(S_\Omega)$ if and only if $f \in L^\infty(c^{-1}(\Sigma))$. Moreover, the Cayley transform $c$ extends continuously on $c^{-1}(\Sigma)$ (see the discussion after \cite[Proposition 1.1]{upmeier3}). Let $\xi \in c^{-1}(\Sigma)$. Since the Cayley transform $c$ is continuous on $c^{-1}(\Sigma),$ one can choose a sequence $\{z_n\}$ in $\Omega$ such that $z_n \to \xi$ and  $c(z_n) \to c(\xi) \in \Sigma$ {\it admissibly} and {\it restrictedly}. We refer to \cite[Section 2]{Weiss} for more details on the admissibly and restrictedly notions of convergence.

Note that the Szeg\"o kernel on $\Omega$ is given by $K^{(\frac{d}{r})}(z,w)=\Delta(z,w)^{-\frac{d}{r}}$ for all $z,w \in \Omega.$ 
Let $P: S_\Omega \times \Omega \to \mathbb C$ be the Poisson kernel for the domain $\Omega$ defined as \[P(\xi, z):=\frac{|K^{(\frac{d}{r})}(\xi, z)|^2}{K^{(\frac{d}{r})}(z,z)},\;\; \xi \in S_{\Omega}, z \in \Omega.\]
If $\mathcal P: \Sigma \times \mathcal{D} \to \mathbb C$ is the Poisson kernel associated to the generalized upper half plane $\mathcal{D},$ then 
by \cite[equation 4.3]{Koranyi} (see also \cite[Proposition 4.10]{Koranyi}), we get \beq \label{PoissonRelation} P(\xi, z)= \frac{\mathcal P\big(c(\xi),c(z)\big)}{\mathcal P\big(c(\xi), c(0)\big)}, \; \; \xi \in c^{-1}(\Sigma), z \in \Omega. \eeq
Furthermore, for a fixed $z \in \Omega$,  $P_z(\xi):=P(\xi, z)$ is  continuous on $S_{\Omega},$ and for any fixed $z \in \mathcal{D}$, $ \mathcal P_z(u):= \mathcal P(u,z)$ is continuous on $\Sigma$. 
Let $\phi \in L^\infty(S_\Omega)$, the Poisson transform of $\phi$ is given by \beqn \label{PoissonC}P[\phi](z)= \int_{S_\Omega}\phi(\xi)P(\xi,z) dz(\xi).\eeqn Similarly, the Poisson transform of $\phi \in L^\infty(\Sigma)$ is given by \beqn \label{PoissonH}\mathcal P[\phi](z)= \int_{\Sigma}\phi(u)\mathcal{P}(u,z) d\beta(u).\eeqn The relation between $d\beta$ and $dz$ is given by $dz(\xi)= \mathcal P_{c(0)}(c(\xi)) d\beta(c(\xi)),$ for any $\xi \in c^{-1}(\Sigma)$ (see \cite[equation 4.1]{Koranyi}).

\begin{proof}[Proof of the Theorem \ref{ctoperator}]
    Let $T_\phi$ be a compact Toeplitz operator on $H^2(S_\Omega)$ for some $\phi \in L^\infty(S_\Omega)$. For any polynomial $p \in H^2(S_\Omega)$, $\inp{p}{g_z}= \Delta(z,z)^{\frac{d}{2r}}p(z),$ where $g_z (\cdot)=\frac{K^{(\frac{d}{r})}(\cdot,z)}{\|K^{(\frac{d}{r})}(\cdot,z)\|}.$  By Lemma \ref{Shilov},  $\Delta (z,z)= \sum_{\ell =0}^r (-1)^{\ell} \Delta^{(\ell)}(z,z) \to \sum_{\ell =0}^r (-1)^{\ell} \binom{r}{\ell}= 0$ as $z \to \xi \in S_\Omega.$ Thus $\inp{p}{g_z} \to 0$  as $z \to \xi \in S_\Omega$. Since polynomials are dense in $H^2(S_\Omega)$, $g_z \overset{w}\to 0$ (weakly) as $z \to \xi \in S_\Omega$.  Consequently, $T_\phi(g_z) \to 0$ as $z \to \xi$ and therefore the Berezin transform $\Tilde{T}_\phi$ of $T_{\phi}$ given by $\Tilde{T}_\phi(z): =\inp{T_\phi g_z}{g_z} \to 0$ as $z \to \xi \in S_\Omega$. 
  Note that \[\Tilde{T}_\phi (z) = \int_{S_\Omega}\phi(\xi) |g_z(\xi)|^2dz(\xi)=\int_{S_\Omega}\phi(\xi) P(\xi, z)dz(\xi)=P[\phi](z).\]
This shows that $P[\phi](z) \to 0$ whenever $z \to \xi \in S_\Omega.$ 

Let $A: L^\infty(S_\Omega) \to L^\infty (\Sigma)$ be a linear map defined as \[ (Af)(u):= (\mathcal P_{c(0)}(u))^{1/2} f (c^{-1}(u)), \;\; \text{ for any } u \in \Sigma,\;\; f \in L^\infty(S_\Omega).\] 
Following the proof of \cite[Lemma 4.4]{Koranyi}, we have $\inp{Af}{Ah}_{L^2(\Sigma)}=\inp{f}{h}_{L^2(S_\Omega)}$ for all $f, h \in L^\infty(S_\Omega).$
    
Therefore, for any $\phi \in L^\infty(S_\Omega)$, we get
\beqn
    P[\phi](z) &=& \int_{S_\Omega}  \phi (\xi)P(\xi, z) dz(\xi)= \inp{\phi}{P_z}_{L^2(S_\Omega)}= \inp{A\phi}{A P_z}_{L^2(\Sigma)}\\
    &=& \int_{\Sigma} (\mathcal P_{g(0)}(u))^{1/2} \phi (c^{-1}(u)) (\mathcal P_{c(0)}(u))^{1/2} P_z(c^{-1}(u))d\beta(u)\\
    &\overset{\eqref{PoissonRelation}}=& \int_{\Sigma} \phi(c^{-1}(u)) \mathcal P(u, c(z)) d\beta(u)
    = \mathcal P[\phi \circ c^{-1}](c(z)).\eeqn 
Let $\{z_n\}$ be a sequence in $\Omega$ such that $z_n \to \xi \in c^{-1}(\Sigma)$ and  $c(z_n) \to c(\xi) \in \Sigma$ admissibly and restrictedly. Then by \cite[Theorem 2]{Weiss}, $\mathcal P[\phi \circ c^{-1}] (c(z_n)) \to \phi \circ c^{-1}(c(\xi))=\phi(\xi)$ for almost every $\xi \in c^{-1}(\Sigma)$. Therefore, for almost every $\xi \in S_\Omega$, $\phi(\xi)= \lim_{n \to \infty}P[\phi](z_n)=0$. This completes the proof.
\end{proof}

\subsection{$\boldsymbol T$-Toeplitz and Reflexivity }
Recall that a commuting $d$-tuple $\boldsymbol T$ of operators $T_1,\ldots,T_d$ defined on $\mathcal{H}$ is a Cartan isometry  if and only if \[\sum_{\alpha} \psi_{\alpha}^{(\ell)}(\boldsymbol{T})^* \psi_{\alpha}^{(\ell)}(\boldsymbol{T})=\binom{r}{\ell}I_{\mathcal{H}},\;\; 1 \leq \ell \leq r .\]
For $\ell=1$, we have $\sum_{i=1}^d T_i^*T_i=rI_{\mathcal{H}}$.
  Motivated by Corollary \ref{rem1}, we define the notion of $\boldsymbol{T}$-{\it Toeplitz operator} as follows: 
  \begin{definition}
   An operator $X\in \mathcal{B}(\mathcal{H})$ is said to be $\boldsymbol{T}$-Toeplitz for a Cartan isometry $\boldsymbol T$ if $\sum_{i=1}^d T_i^* X T_i = rX,$ where $r$ is the rank of the domain $\Omega.$
  \end{definition}
  From now on, we denote $\mathcal{T}(\boldsymbol{T})$ as the set of all $\boldsymbol{T}$ -Toeplitz operators associated to a Cartan isometry $\boldsymbol T.$ 

Let $\boldsymbol{S}=(S_1, \ldots, S_d)$ be a $d$-tuple of commuting operators defined on $\mathcal H$ that satisfies $\sum_{i=1}^d S_i^*S_i=rI_{\mathcal{H}}$. By setting $\phi:\mathcal{B}(\mathcal{H}) \to \mathcal{B}(\mathcal{H})$ as  $\phi(A)= \frac{1}{r}\sum_{i=1}^d S_i^* AS_i$ in the proof of part (3) of the Theorem 1.2 in \cite{BP}, the next result is immediate. 
\begin{lemma}\label{prunaru}
    Let $\boldsymbol{S}=(S_1, \ldots, S_d)$ be a commuting $d$-tuple which satisfies the relation $\sum_{i=1}^d S_i^*S_i=rI_{\mathcal{H}}$ and let $\boldsymbol{U}=(U_1,\cdots,U_d)$ be its minimal normal extension defined on $\mathcal{K}$ containing $\mathcal{H}.$ Then for any $X \in \mathcal{B}(\mathcal{H})$ satisfying $\sum_{i=1}^d S_i^* X S_i= rX$
    must be of the form $P_{\mathcal{H}}Y|_{\mathcal{H}}$ for some $Y \in (\boldsymbol{U})'$, the commutant of $\boldsymbol U.$    
\end{lemma}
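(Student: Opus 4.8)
The statement is Lemma~\ref{prunaru}, which is explicitly flagged as a direct consequence of part~(3) of Theorem~1.2 in \cite{BP} upon the substitution $\phi(A)=\tfrac{1}{r}\sum_{i=1}^d S_i^* A S_i$. So the proof I would write is a short \emph{reduction} argument rather than a from-scratch construction, and the main work is verifying that the hypotheses of the cited theorem are met by this particular $\phi$. First I would observe that the condition $\sum_{i=1}^d S_i^* S_i = r I_{\mathcal H}$ says exactly that $\phi$ is a unital positive (indeed completely positive) normal map on $\mathcal B(\mathcal H)$: unitality is $\phi(I_{\mathcal H})=\tfrac{1}{r}\sum_i S_i^*S_i=I_{\mathcal H}$, and complete positivity is immediate because $\phi$ has the Kraus/Stinespring form $A\mapsto \tfrac1r\sum_i S_i^*AS_i$. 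The $\boldsymbol T$-Toeplitz condition $\sum_{i=1}^d S_i^*XS_i = rX$ is then precisely the fixed-point equation $\phi(X)=X$.

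\textbf{Key steps in order.} Having identified $\phi$, I would (i) state that $\boldsymbol S$ is subnormal with minimal normal extension $\boldsymbol U=(U_1,\dots,U_d)$ on $\mathcal K\supseteq\mathcal H$ --- this is part of the hypothesis, and the existence of the minimal normal extension for a subnormal tuple is recalled in the introduction of the excerpt. (The relation $\sum_i S_i^*S_i=rI$ makes $\boldsymbol S$ what one calls an $r$-spherical contraction/isometry, placing us squarely in the setting of \cite{BP}.) Then (ii) I would define the dilated map $\Phi$ on $\mathcal B(\mathcal K)$ by $\Phi(Y)=\tfrac1r\sum_{i=1}^d U_i^* Y U_i$ and note that, by Lemma~\ref{lemmaSC} applied with $p_i(z)=q_i(z)=\sqrt{1/r}\,z_i$, the identity $\sum_{i=1}^d S_i^*S_i=rI_{\mathcal H}$ lifts to $\sum_{i=1}^d U_i^*U_i=rI_{\mathcal K}$, so $\Phi$ is also unital and completely positive. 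Next (iii) I would invoke exactly the conclusion of \cite[Theorem~1.2(3)]{BP}: every fixed point $X$ of $\phi$ is of the form $X=P_{\mathcal H}\,Y|_{\mathcal H}$ for some $Y$ in the fixed-point set of $\Phi$, and --- this is the content one extracts --- since $\boldsymbol U$ is normal the fixed points of $\Phi$ coincide with the commutant $(\boldsymbol U)'$. I would include the one-line justification for this last coincidence: if $Y\in(\boldsymbol U)'$ then $\Phi(Y)=\tfrac1r\sum_i U_i^*YU_i=\tfrac1r Y\sum_i U_i^*U_i=Y$; conversely the fixed points of such a completely positive map built from a spherical-isometric normal tuple are exactly the multiplicative domain, which for a normal tuple is the commutant (this is the normal-tuple analogue of Proposition~\ref{bh2} in the excerpt, and indeed Proposition~\ref{bh2} is essentially this statement in the one-parameter spherical case).

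\textbf{The main obstacle.} The genuine difficulty is not in $\phi$ but in pinning down precisely which formulation of \cite[Theorem~1.2(3)]{BP} is being cited and packaging it so the lift ``$Y\in(\boldsymbol U)'$'' (rather than merely ``$Y$ a fixed point of $\Phi$'') comes out cleanly. I expect the cited theorem to be stated for a single unital completely positive map together with its ``minimal'' dilation, and the work is to check that the minimal normal extension $\boldsymbol U$ of the subnormal tuple $\boldsymbol S$ supplies exactly the dilation data that theorem requires, and that the fixed-point lattice of $\Phi$ gets identified with $(\boldsymbol U)'$. The cleanest way to close this gap, and what I would actually write, is to note that $\Phi$ is the restriction to a normal tuple of the averaging map, invoke Proposition~\ref{bh2} (or its direct generalization) to conclude that $\Phi(Y)=Y$ forces $Y$ to commute with each $U_i$, and thereby replace the abstract fixed-point set in \cite{BP} by the concrete commutant $(\boldsymbol U)'$, which is what the statement asserts. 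Since the authors ask only to record the lemma ``immediately'' from \cite{BP}, I would keep the argument to a few lines and not reprove Theorem~1.2 of \cite{BP}.
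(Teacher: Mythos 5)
Your proposal is correct and takes essentially the same route as the paper: the paper's entire ``proof'' is the one-line remark that the result follows by running the argument of \cite[Theorem 1.2(3)]{BP} with $\phi(A)=\tfrac1r\sum_i S_i^*AS_i$, which is exactly your reduction. The extra details you supply --- unitality and complete positivity of $\phi$, lifting $\sum_i S_i^*S_i=rI_{\mathcal H}$ to $\sum_i U_i^*U_i=rI_{\mathcal K}$ via Lemma~\ref{lemmaSC}, and identifying the fixed points of the dilated map with $(\boldsymbol U)'$ via Proposition~\ref{bh2} --- are precisely the verifications the paper leaves implicit, and they are all sound.
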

As in the case of Toeplitz operators on $H^2(S_{\Omega})$ (see Theorem \ref{toeplitz}), below we obtain a similar result for $\mathcal{T}(\boldsymbol{T})$.
\begin{proposition}
   Let $\boldsymbol{T}$ be a Cartan isometry and $X \in \mathcal{T}(\boldsymbol{T})$ then for any $1 \leq \ell \leq r$, \[\sum_{\alpha} \psi_{\alpha}^{(\ell)}(\boldsymbol{T})^* X\psi_{\alpha}^{(\ell)}(\boldsymbol{T})=\binom{r}{\ell}X.\]
\end{proposition}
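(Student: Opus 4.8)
The plan is to realise $X$ inside the commutant of the minimal normal extension of $\boldsymbol T$ and then transport the full family of Cartan-isometry identities, already available at the level of the extension, back down to $\mathcal H$. Since $\boldsymbol T$ is a Cartan isometry, taking $\ell=1$ in Theorem \ref{characterization} gives $\sum_{i=1}^d T_i^* T_i = rI_{\mathcal H}$; let $\boldsymbol N$ denote its minimal normal extension on $\mathcal K \supseteq \mathcal H$. Because $X \in \mathcal T(\boldsymbol T)$ means precisely $\sum_{i=1}^d T_i^* X T_i = rX$, I would invoke Lemma \ref{prunaru} with $\boldsymbol S = \boldsymbol T$ and $\boldsymbol U = \boldsymbol N$ to produce $Y \in (\boldsymbol N)'$ with $X = P_{\mathcal H} Y|_{\mathcal H}$.

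Two facts then drive the computation. First, since $Y$ commutes with each $N_i$, it commutes with every holomorphic polynomial in $N_1,\dots,N_d$, and in particular with each $\psi_\alpha^{(\ell)}(\boldsymbol N)$; crucially, no appeal to Fuglede--Putnam is needed, as $\psi_\alpha^{(\ell)}(\boldsymbol N)$ involves only the $N_i$ and not their adjoints. Second, applying Lemma \ref{lemmaSC} to the Cartan isometry $\boldsymbol T$ (exactly as in the proof of Theorem \ref{characterization}) yields $\sum_\alpha \psi_\alpha^{(\ell)}(\boldsymbol N)^* \psi_\alpha^{(\ell)}(\boldsymbol N) = \binom{r}{\ell} I_{\mathcal K}$ for every $1 \le \ell \le r$.

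With these in hand, I would fix $1 \le \ell \le r$ and $h,g \in \mathcal H$ and expand $\langle \sum_\alpha \psi_\alpha^{(\ell)}(\boldsymbol T)^* X \psi_\alpha^{(\ell)}(\boldsymbol T)h,\, g\rangle_{\mathcal H} = \sum_\alpha \langle X \psi_\alpha^{(\ell)}(\boldsymbol T)h,\, \psi_\alpha^{(\ell)}(\boldsymbol T)g\rangle_{\mathcal H}$. Using $\psi_\alpha^{(\ell)}(\boldsymbol T) = \psi_\alpha^{(\ell)}(\boldsymbol N)|_{\mathcal H}$, the substitution $X = P_{\mathcal H} Y|_{\mathcal H}$, and the fact that $\psi_\alpha^{(\ell)}(\boldsymbol N)g \in \mathcal H$ (so the projection may be dropped against it), each summand becomes $\langle Y \psi_\alpha^{(\ell)}(\boldsymbol N)h,\, \psi_\alpha^{(\ell)}(\boldsymbol N)g\rangle_{\mathcal K}$. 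Commuting $Y$ through $\psi_\alpha^{(\ell)}(\boldsymbol N)$ and summing over $\alpha$ turns this into $\langle \big(\sum_\alpha \psi_\alpha^{(\ell)}(\boldsymbol N)^* \psi_\alpha^{(\ell)}(\boldsymbol N)\big) Y h,\, g\rangle_{\mathcal K} = \binom{r}{\ell}\langle Yh,\, g\rangle_{\mathcal K}$, and since $g \in \mathcal H$ this last quantity equals $\binom{r}{\ell}\langle Xh,\,g\rangle_{\mathcal H}$. As $h,g$ range over $\mathcal H$, the stated operator identity follows.

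The computation is essentially routine, and the whole argument hinges on realising $X$ as the compression of a commutant element $Y$ via Lemma \ref{prunaru}; once this is done, the identity for general $\ell$ is forced by the corresponding identity at the level of $\boldsymbol N$ together with the commutation of $Y$ with the holomorphic polynomials $\psi_\alpha^{(\ell)}(\boldsymbol N)$. The one point that requires care is the projection bookkeeping: one must check that the intermediate equalities genuinely hold in $\mathcal K$, so that passing the projection in and out against vectors already lying in $\mathcal H$ is legitimate, and that only holomorphic (rather than conjugate-analytic) commutation is ever invoked.
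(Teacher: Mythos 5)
Your argument is correct and follows essentially the same route as the paper: both realise $X$ as the compression of some $Y\in(\boldsymbol N)'$ via Lemma \ref{prunaru}, lift the identities $\sum_\alpha \psi_\alpha^{(\ell)}(\boldsymbol N)^*\psi_\alpha^{(\ell)}(\boldsymbol N)=\binom{r}{\ell}I_{\mathcal K}$ through Lemma \ref{lemmaSC} and Theorem \ref{characterization}, and conclude by the same inner-product computation. Your explicit remark that only holomorphic commutation of $Y$ with $\psi_\alpha^{(\ell)}(\boldsymbol N)$ is needed (no Fuglede--Putnam) is a minor, harmless refinement of the paper's presentation.
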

\begin{proof}
    Let $\boldsymbol{N}$ be the minimal normal extension of $\boldsymbol{T}$ defined on $\mathcal{K}.$ Note that $\boldsymbol T$ satisfies $\sum_{i=1}^d T_i^*T_i=rI_{\mathcal{H}}.$ Let $X \in \mathcal{T}(\boldsymbol{T}),$ then by Lemma \ref{prunaru}, there exists an operator $Y$ in the commutant $(\boldsymbol{N})'$ such that $X$ is the compression of $Y$ to $\mathcal H.$
For any $h,g \in \mathcal{H}$ and $1 \leq \ell \leq r$,  Lemma \ref{lemmaSC} combined with Theorem \ref{characterization} gives
\beqn
 \Inp{\sum_{\alpha} \psi_{\alpha}^{(\ell)}(\boldsymbol{T})^* X\psi_{\alpha}^{(\ell)}(\boldsymbol{T})h}{g}_{\mathcal{H}}
            &=& \sum_{\alpha}\Inp{ X\psi_{\alpha}^{(\ell)}(\boldsymbol{N})h}{\psi_{\alpha}^{(\ell)}(\boldsymbol{N})g}_{\mathcal{H}}\\
            &=& \sum_{\alpha}\Inp{ Y\psi_{\alpha}^{(\ell)}(\boldsymbol{N})h}{\psi_{\alpha}^{(\ell)}(\boldsymbol{N})g}_{\mathcal{K}}\\
            &=&\sum_{\alpha}\Inp{\psi_{\alpha}^{(\ell)}(\boldsymbol{N})^* Y\psi_{\alpha}^{(\ell)}(\boldsymbol{N})h}{g}_{\mathcal{K}}\\
            &=& \binom{r}{\ell}\inp{Yh}{g}_{\mathcal{K}}
            =\binom{r}{\ell}\inp{Xh}{g}_{\mathcal{H}}.\eeqn
This completes the proof.
\end{proof}

\begin{remark}\label{bhcommutant}
   For a Cartan isometry $\boldsymbol{T}$, the set of all $\boldsymbol T$-Toeplitz operator, $\mathcal{T}(\boldsymbol{T})$ is nothing but  \beqn \left \{ X \in \mathcal{B}(\mathcal{H}) : \sum_{\alpha} \psi_{\alpha}^{(\ell)}(\boldsymbol{T})^* X\psi_{\alpha}^{(\ell)}(\boldsymbol{T})=\binom{r}{\ell}X, 1 \leq \ell \leq r\right\}= \left\{P_{\mathcal{H}} Y|_{\mathcal{H}}: Y \in (\boldsymbol{N})'\right \}. \eeqn
\end{remark}

Note that $\phi(\boldsymbol T)$ is a Cartan isometry whenever $\boldsymbol T$ is a Cartan isometry (see Theorem \ref{invariance}). First we observe that $\phi(\boldsymbol N)$ is the minimal normal extension of $\phi(\boldsymbol T),$ where the $d$-tuple $\boldsymbol N$ defined on $\mathcal K\supseteq \mathcal{H}$ is the minimal normal extension of $\boldsymbol T.$  Clearly, $\phi(\boldsymbol N)$ is a normal extension of $\phi(\boldsymbol T).$ If there exists a reducing subspace $\mathcal{K}' \subseteq \mathcal{K}$ for $\phi(\boldsymbol N)$ containing $\mathcal{H}$ then $\phi(\boldsymbol{N})|_{\mathcal{K}'}$ is a normal extension of $\phi(\boldsymbol{T}).$ Subsequently, $\phi^{-1}(\phi(\boldsymbol{N})|_{\mathcal{K}'})$ becomes a normal extension of $\boldsymbol T.$ This contradicts the minimality of $\boldsymbol N.$
In view of the preceding discussion, we prove the following result. 
\begin{corollary}
 Let $\boldsymbol T$ be a Cartan isometry and $\phi \in \rm{Aut}(\Omega)$, then $\mathcal{T}(\boldsymbol T)= \mathcal{T}(\phi(\boldsymbol T))$.
\end{corollary}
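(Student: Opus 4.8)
The plan is to reduce the corollary to a single identity between commutants. By Remark \ref{bhcommutant}, the set of $\boldsymbol{T}$-Toeplitz operators is completely determined by the minimal normal extension $\boldsymbol{N}$ of $\boldsymbol{T}$ via the formula $\mathcal{T}(\boldsymbol{T})=\{P_{\mathcal{H}}Y|_{\mathcal{H}}:Y\in(\boldsymbol{N})'\}$. Since $\boldsymbol{T}$ and $\phi(\boldsymbol{T})$ act on the same Hilbert space $\mathcal{H}$ and the compression map $Y\mapsto P_{\mathcal{H}}Y|_{\mathcal{H}}$ is common to both, the whole statement will follow once I show that the minimal normal extensions of $\boldsymbol{T}$ and of $\phi(\boldsymbol{T})$ have the same commutant.

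First I would invoke the discussion preceding the statement together with Theorem \ref{invariance}: $\phi(\boldsymbol{T})$ is again a Cartan isometry, and its minimal normal extension is precisely $\phi(\boldsymbol{N})$. Applying Remark \ref{bhcommutant} to $\phi(\boldsymbol{T})$ then gives $\mathcal{T}(\phi(\boldsymbol{T}))=\{P_{\mathcal{H}}Y|_{\mathcal{H}}:Y\in(\phi(\boldsymbol{N}))'\}$. Comparing the two descriptions, the corollary is equivalent to the assertion $(\boldsymbol{N})'=(\phi(\boldsymbol{N}))'$.

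The core step is this commutant identity. Each component of $\phi(\boldsymbol{N})$ is obtained from $\boldsymbol{N}$ through the continuous functional calculus, so $\phi(\boldsymbol{N})$ lies in the von Neumann algebra $W^*(\boldsymbol{N})$ generated by $\boldsymbol{N}$; hence any $Y$ commuting with $\boldsymbol{N}$ (equivalently, by Fuglede-Putnam, with $\boldsymbol{N}$ and $\boldsymbol{N}^*$) also commutes with $\phi(\boldsymbol{N})$, yielding $(\boldsymbol{N})'\subseteq(\phi(\boldsymbol{N}))'$. For the reverse inclusion I would use the invertibility of $\phi$: since $\phi^{-1}\in\rm{Aut}(\Omega)$ restricts to a homeomorphism of $S_\Omega$ onto $S_\Omega$ and $\sigma(\boldsymbol{N})\subseteq S_\Omega$, the composition law for the continuous functional calculus gives $\phi^{-1}(\phi(\boldsymbol{N}))=(\phi^{-1}\circ\phi)(\boldsymbol{N})=\boldsymbol{N}$ (using $\phi^{-1}\circ\phi=\mathrm{id}$ on $S_\Omega$). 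Thus $\boldsymbol{N}\in W^*(\phi(\boldsymbol{N}))$, so $(\phi(\boldsymbol{N}))'\subseteq(\boldsymbol{N})'$. Combining the two inclusions gives $(\boldsymbol{N})'=(\phi(\boldsymbol{N}))'$, and therefore $\mathcal{T}(\boldsymbol{T})=\mathcal{T}(\phi(\boldsymbol{T}))$.

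The main obstacle is the clean justification that $\phi(\boldsymbol{N})$ generates the same von Neumann algebra as $\boldsymbol{N}$. This rests on two points that must be handled with care: that the continuous functional calculus respects composition, so that $\phi^{-1}(\phi(\boldsymbol{N}))=\boldsymbol{N}$ genuinely holds on $\sigma(\boldsymbol{N})\subseteq S_\Omega$; and that the commutant of a normal tuple may be read as commuting with the generators alone, since Fuglede-Putnam automatically upgrades this to commuting with their adjoints. Once these are in place, the remainder is routine bookkeeping.
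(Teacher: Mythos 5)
Your argument is correct and follows essentially the same route as the paper: both reduce the claim via Remark \ref{bhcommutant} to the commutant identity $(\boldsymbol{N})'=(\phi(\boldsymbol{N}))'$ and settle it by the functional calculus for the normal tuple $\boldsymbol{N}$ (the paper approximates $\phi$ uniformly by polynomials on $\sigma(\boldsymbol N)$, you phrase it as membership in the generated von Neumann algebra --- the same idea). Your treatment is in fact slightly more complete, since you make the reverse inclusion explicit via $\phi^{-1}(\phi(\boldsymbol N))=\boldsymbol N$, which the paper leaves implicit.
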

\begin{proof}
By the Remark \ref{bhcommutant}, it suffices to show that $(\boldsymbol{N})'= (\phi(\boldsymbol{N}))'$. Let $Y \in (\boldsymbol N)'$, then  $Y p(\boldsymbol N)=p(\boldsymbol N) Y,$ for any polynomial $p \in \mathbb C[z].$ Since $\phi$ is analytic in a neighbourhood of $\bar{\Omega},$ thus can be approximated uniformly by analytic polynomials on $\sigma(\boldsymbol N).$ Now the conclusion follows from the functional calculus for $\boldsymbol N.$
\end{proof}

For an irreducible bounded symmetric domain $\Omega$, let $A(\Omega)$ denote the algebra of functions holomorphic in $\Omega$ and continuous up to the boundary. Note that $A(\Omega)$ is a closed subalgebra containing polynomials with the Shilov boundary $S_{\Omega}$.  Let $\boldsymbol{N}$ be the minimal normal extension of a Cartan isometry $\boldsymbol{T}$ and let $\mu$ be the associated scalar spectral measure. Let $ \Psi_{\boldsymbol N}$ be the $L^\infty$-functional calculus. The restriction algebra \[\mathcal{R}(\boldsymbol{T}):= \{ f \in L^\infty (\mu) : \Psi_{\boldsymbol N}(f)\mathcal{H} \subseteq \mathcal{H}\}\] is a weak* closed subalgebra of $L^\infty(\mu).$
 Let $A=A(\Omega)|_{S_{\Omega}}.$ Since $\Omega$ is polynomially convex, by an application of the Oka-Weil theorem, the analytic polynomials are dense in $A \subseteq C(S_{\Omega}).$ Consequently, we have $A \subseteq \mathcal{R}(\boldsymbol{T}).$  
 Thus a Cartan isometry $\boldsymbol T$ can be seen as an $A$-isometry (see \cite[Definition 2.1 ]{DEE}).  
Now a Cartan isometry is realized as $A$-isometry, one may define the $\boldsymbol T$-Toeplitz operator described in \cite[Definition 3.1]{DEE}.
We illustrate in Proposition \ref{Jequiv} that our notion of $\boldsymbol T$-Toeplitz operators is consistent with it.

Recall that the dual subalgebra $H^\infty (\mu)$ is the weak* closure of $\{[p|_{S_\Omega}]_\mu : p \in \mathbb{C}[z]\}$ in $ L^\infty (\mu),$ where $[\cdot]_\mu$ is equivalence class in 
 $L^\infty(\mu)$ and $\mathbb C[z]$ denote the algebra of polynomials in $z_1, \ldots, z_d.$ The unital dual operator algebra $\mathcal{A}_{\boldsymbol{T}}$ generated by $\boldsymbol{T},$ is the weak* closure of $\{p(\boldsymbol{T}): p \in \mathbb{C}[z]\}$ in $\mathcal{B}(\mathcal{H}).$
 It is known that $\Psi_{\boldsymbol{N}}$ induces a dual algebra isomorphism, that is, an isometric isomorphism and weak* homeomorphism, \[\gamma_{\boldsymbol{T}}: H^\infty(\mu) \to \mathcal{A}_{\boldsymbol{T}}, \text{ given by } f\to \Psi_{\boldsymbol{N}}(f)|_{\mathcal{H}},\] a unique extension of the polynomial functional calculus of $\boldsymbol{T}$.
 A function $\theta \in H^\infty(\mu)$ is called {\it $\mu$-inner function} if $|\theta|=1 ~\mu$- a.e. on $S_{\Omega}$. Let $I_{\mu}$ be the set of all $\mu$-inner functions in $H^\infty(\mu)$. 
\begin{proposition}\label{Jequiv}
	Let $\boldsymbol{T}$ be a Cartan isometry. An operator $X \in \mathcal{T}(\boldsymbol{T})$ if and only if \[J^*XJ=X \quad \text{for every isometry } J \text{ in the dual algebra } \mathcal{A}_{\boldsymbol{T}}.\]
\end{proposition}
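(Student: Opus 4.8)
The plan is to prove the equivalence by characterizing $\mathcal{T}(\boldsymbol{T})$ via the commutant description established in Remark \ref{bhcommutant}, namely $\mathcal{T}(\boldsymbol{T}) = \{P_{\mathcal{H}} Y|_{\mathcal{H}} : Y \in (\boldsymbol{N})'\}$, and then connecting the isometries $J$ in $\mathcal{A}_{\boldsymbol{T}}$ to the $\mu$-inner functions via the dual algebra isomorphism $\gamma_{\boldsymbol{T}}$. First I would handle the easier direction: suppose $X \in \mathcal{T}(\boldsymbol{T})$, so $X = P_{\mathcal{H}} Y|_{\mathcal{H}}$ for some $Y \in (\boldsymbol{N})'$. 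Any isometry $J \in \mathcal{A}_{\boldsymbol{T}}$ corresponds under $\gamma_{\boldsymbol{T}}$ to some $\theta \in H^\infty(\mu)$ with $J = \Psi_{\boldsymbol{N}}(\theta)|_{\mathcal{H}}$; since $\gamma_{\boldsymbol{T}}$ is isometric and $J$ is an isometry, $\theta$ must be a $\mu$-inner function, i.e. $|\theta| = 1$ $\mu$-a.e. Then $\Psi_{\boldsymbol{N}}(\theta)$ is a normal operator commuting with $\boldsymbol{N}$ (hence with $Y$), and because $J = \Psi_{\boldsymbol{N}}(\theta)$ restricts $\mathcal{H}$ invariantly and $|\theta|=1$, the operator $\Psi_{\boldsymbol{N}}(\theta)$ is in fact unitary. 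I would compute $J^* X J$ by writing $J = \Psi_{\boldsymbol{N}}(\theta)|_{\mathcal{H}}$ and using that $\Psi_{\boldsymbol{N}}(\theta)\mathcal{H} \subseteq \mathcal{H}$; the key identity is that for $h \in \mathcal{H}$, $\langle J^* X J h, h\rangle = \langle X \Psi_{\boldsymbol{N}}(\theta) h, \Psi_{\boldsymbol{N}}(\theta) h\rangle = \langle Y \Psi_{\boldsymbol{N}}(\theta) h, \Psi_{\boldsymbol{N}}(\theta) h\rangle$, and then moving $\Psi_{\boldsymbol{N}}(\theta)$ across using $Y \in (\boldsymbol{N})'$ and $|\theta|^2 = 1$ to collapse back to $\langle Y h, h\rangle = \langle X h, h\rangle$.

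For the converse, I would suppose $J^* X J = X$ for every isometry $J$ in $\mathcal{A}_{\boldsymbol{T}}$ and aim to show $X \in \mathcal{T}(\boldsymbol{T})$, i.e. $\sum_{i=1}^d T_i^* X T_i = rX$. The natural strategy, following the spirit of \cite{BP} and \cite{DEE}, is to realize $\frac{1}{r}\sum_{i=1}^d T_i^* X T_i$ as an average or integral of conjugates $J^* X J$ over a suitable family of isometries and invoke the invariance hypothesis. Concretely, since $\boldsymbol{T}$ is a Cartan isometry with $\sum T_i^* T_i = rI$, the coordinate multiplications themselves need not be isometries, but the $\mu$-inner functions furnish a rich supply of isometries $J$ in $\mathcal{A}_{\boldsymbol{T}}$. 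I would exploit the fact that the scalar spectral measure of $\boldsymbol{N}$ is supported on $S_{\Omega}$ together with a representation of the completely positive map $\Phi(A) = \frac{1}{r}\sum_{i=1}^d S_i^* A S_i$ from Lemma \ref{prunaru}; one shows that the fixed points of $\Phi$ coincide with operators invariant under conjugation by all inner isometries. The averaging identity should express $\frac{1}{r}\sum_i T_i^* X T_i$ as an integral $\int J^* X J \, d(\cdot)$ against inner functions, so that the pointwise invariance $J^* X J = X$ forces $X$ to be a fixed point of $\Phi$, which is exactly the $\boldsymbol{T}$-Toeplitz condition.

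The main obstacle will be the converse direction, specifically producing the integral representation of $\frac{1}{r}\sum_i T_i^* X T_i$ in terms of conjugations by isometries $J$ lying in $\mathcal{A}_{\boldsymbol{T}}$. The difficulty is that the individual $T_i$ are not themselves isometries, so one cannot directly write $\sum_i T_i^* X T_i$ as a finite sum of the conjugates $J^* X J$; instead the argument must go through the abundance of $\mu$-inner functions $\theta$ and an averaging over the unitary group action, using that $\Psi_{\boldsymbol{N}}(\theta)$ is unitary for inner $\theta$ and that these generate enough of the commutant structure. I would reduce this to the abstract $A$-isometry framework of \cite{DEE}, where the equivalence between the fixed-point (Toeplitz) condition and invariance under inner isometries is available: since a Cartan isometry is realized as an $A$-isometry (as noted just before the statement), the characterization of $\boldsymbol{T}$-Toeplitz operators in \cite[Definition 3.1]{DEE} via $J^* X J = X$ applies, and the task is to verify that \cite{DEE}'s notion matches our $\mathcal{T}(\boldsymbol{T})$ through the $\ell = 1$ relation. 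The cleanest route is therefore to cite the $A$-isometry machinery for the hard inclusion and supply the explicit computation above for the direct inclusion, closing the loop.
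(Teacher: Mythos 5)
Your forward direction is essentially the paper's argument: identify an isometry $J\in\mathcal{A}_{\boldsymbol{T}}$ with $\gamma_{\boldsymbol{T}}(\theta)$ for a $\mu$-inner function $\theta$, use $\mathcal{T}(\boldsymbol{T})=\{P_{\mathcal{H}}Y|_{\mathcal{H}}:Y\in(\boldsymbol{N})'\}$ from Remark \ref{bhcommutant}, and move the unitary $\Psi_{\boldsymbol{N}}(\theta)$ across $Y$ using $|\theta|^2=1$ $\mu$-a.e. One small caveat: ``$\gamma_{\boldsymbol{T}}$ is isometric and $J$ is an isometry, hence $\theta$ is inner'' is not immediate ($\|\theta\|_\infty=1$ alone does not give $|\theta|=1$ a.e.); the correct argument, as in \cite[Lemma 1.1]{DE}, shows $P_{\mathcal{H}}\Psi_{\boldsymbol{N}}(1-|\theta|^2)|_{\mathcal{H}}=0$ and then uses positivity together with the minimality of $\boldsymbol{N}$. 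Since the paper also only cites this, the issue is minor.

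The converse is where your proposal has a genuine gap. The implication ``$J^*XJ=X$ for every inner isometry $\Rightarrow X\in\mathcal{T}(\boldsymbol{T})$'' is simply false without an abundance of inner functions: if $I_\mu$ consisted only of unimodular constants, the hypothesis would be vacuous while $\mathcal{T}(\boldsymbol{T})$ is a proper subset of $\mathcal{B}(\mathcal{H})$. The essential ingredient, which your proposal never addresses, is that the triple $(A(\Omega)|_{S_\Omega},S_\Omega,\mu)$ is \emph{regular in the sense of Aleksandrov} (the paper gets this from \cite[Proposition 18]{Al}), which yields via \cite[Corollary 2.5]{DE} that $L^\infty(\mu)$ is the weak* closed linear span of $\{\bar{\eta}\theta:\eta,\theta\in I_\mu\}$. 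Only with this density can one run the argument of \cite[Proposition 3.1]{DE}: the hypothesis makes the assignment $\bar{\eta}\theta\mapsto\langle X\gamma_{\boldsymbol{T}}(\theta)h,\gamma_{\boldsymbol{T}}(\eta)g\rangle$ well defined and weak* continuous on that span, hence it extends to $L^\infty(\mu)$ and produces $Y\in(\boldsymbol{N})'$ with $X=P_{\mathcal{H}}Y|_{\mathcal{H}}$, which by Remark \ref{bhcommutant} means $X\in\mathcal{T}(\boldsymbol{T})$. Your proposed alternative --- writing $\tfrac1r\sum_iT_i^*XT_i$ as an integral of conjugates $J^*XJ$ over inner isometries --- is not how the machinery works, and I do not believe such an identity exists: the coordinate functions are not inner on $S_\Omega$, and the fixed-point set of Prunaru's map in Lemma \ref{prunaru} is identified with $\{P_{\mathcal{H}}Y|_{\mathcal{H}}:Y\in(\boldsymbol{N})'\}$ by an ergodic averaging of the map itself, not by averaging over $I_\mu$. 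Deferring wholesale to \cite{DEE} does not close the gap either, since the relevant results there are stated for \emph{regular} $A$-isometries, i.e., they presuppose exactly the Aleksandrov regularity you have not verified.
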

 \begin{proof}
 Using an argument similar to the proof of \cite[Lemma 1.1]{DE}, it can be shown that an operator $J \in \mathcal{A}_{\boldsymbol{T}}$ is an isometry if and only if $J=\gamma_{\boldsymbol{T}}(\theta)$ for some $\mu$-inner function $\theta \in H^\infty (\mu)$. 
 Also, note that by \cite[Proposition 18]{Al} and the subsequent discussion, $(A({\Omega})|_{S_\Omega}, S_{\Omega}, \mu)$ is regular in the sense of Aleksandrov. 
 Thus, by \cite[Corollary 2.5]{DE}, $L^\infty(\mu)= \overline{LH}^{w*}\{\bar{\eta}\cdot\theta: \eta, \theta \in I_{\mu}\}$, where $\overline{LH}^{w*}$ denotes the weak* closed linear span. Now, the rest of the proof can be completed by going along the same lines as the proof of \cite[Proposition 3.1]{DE}.
 \end{proof}
 
\begin{remark}
For a Cartan isometry $\boldsymbol{T},$ consider the set $\mathcal{T}_a(\boldsymbol{T})= \{ P_{\mathcal{H}} \Psi_{\boldsymbol{N}}(f)|_{\mathcal{H}}: f \in H^\infty (\mu)\}$ of all analytic $\boldsymbol T$-Toeplitz operators. For $1\leq p < \infty,$ let $\mathcal{S}_p$ denote the {\it Schatten $p$ class} operators.  Let $S \in \mathcal B(\mathcal H).$ 
\begin{enumerate}
  \item [(i)] If $SX-XS \in \mathcal{S}_p$ for all $X \in\mathcal{T}_a(\boldsymbol{T})$ then $S=Y+K$ with $Y \in \mathcal T(\boldsymbol T)$ and  $K \in \mathcal{S}_p$ (see \cite[Theorem 3]{DES}).   
  \item [(ii)] If $SX-XS$ is a finite rank operator for all $X \in\mathcal{T}_a(\boldsymbol{T})$ then $S$ is a finite rank perturbation of a $\boldsymbol T$-Toeplitz operator provided $\sigma_p(\boldsymbol T)$ is empty (refer to \cite[Theorem 8]{DES}).
\end{enumerate}
\end{remark}

 Let $\boldsymbol{T}$ be a Cartan isometry, and let $\mu$ be the associated scalar spectral measure. Consider the decomposition $\mu=\mu_c+ \mu_d$ of $\mu$ into its continuous and discrete part. This will lead to an orthogonal decomposition $\mathcal{H}=\mathcal{H}_c \oplus \mathcal{H}_d,$  where these subspaces are reducing for every $\boldsymbol{T}$-Toeplitz operator. 
 Note that $\sigma_p(\boldsymbol{T})=\sigma_p(\boldsymbol{N}),$ where $\boldsymbol N$ is the minimal normal extension of $\boldsymbol T.$ For each $\lambda \in \sigma_p(\boldsymbol{T}),$ consider the eigenspace $\mathcal{H}_d^\lambda= \bigcap_{i=1}^d \ker (T_i-\lambda_i)= \bigcap_{i=1}^d \ker (N_i-\lambda_i)$ and $\mathcal{H}_d= \bigoplus_{\lambda \in \sigma_p(\boldsymbol{T})}\mathcal{H}_d^\lambda$. Clearly, $\mathcal{H}_d$ is reducing for $\boldsymbol T.$  Let $\mathcal{H}_c$ denote the orthogonal complement of $\mathcal{H}_d$ in $ \mathcal{H}$. 
We now give an analog of \cite[Proposition 3.2]{DE} for Cartan isometry.
\begin{proposition}
    Let $\boldsymbol{T}$ be a Cartan isometry. Then the orthogonal decomposition $\mathcal{H}=\mathcal{H}_c \oplus \mathcal{H}_d$  induces a decomposition of all $\boldsymbol T$-Toeplitz operators; \[\mathcal{T}(\boldsymbol{T}) \cong \mathcal{T}(\boldsymbol{T}_c) \oplus \bigoplus_{\lambda \in \sigma_p(\boldsymbol{T})} \mathcal{B}(\mathcal{H}_d^\lambda), \] 
    where $\boldsymbol{T}_c= \boldsymbol{T}|_{\mathcal{H}_c}$ is the continuous part of $\boldsymbol{T}$.
\end{proposition}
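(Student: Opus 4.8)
The plan is to realize the asserted isomorphism through the natural compression map $\Phi\colon\mathcal{T}(\boldsymbol{T})\to\mathcal{T}(\boldsymbol{T}_c)\oplus\bigoplus_{\lambda\in\sigma_p(\boldsymbol{T})}\mathcal{B}(\mathcal{H}_d^\lambda)$, $X\mapsto\big(X|_{\mathcal{H}_c},(X|_{\mathcal{H}_d^\lambda})_\lambda\big)$, and to verify that $\Phi$ is a well-defined, isometric, linear bijection onto the $\ell^\infty$-direct sum. The crucial first step is to show that every eigenspace $\mathcal{H}_d^\lambda$ reduces each $\boldsymbol{T}$-Toeplitz operator $X$. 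For this I would use the commutant description from Remark~\ref{bhcommutant}: write $X=P_{\mathcal{H}}Y|_{\mathcal{H}}$ with $Y\in(\boldsymbol{N})'$. By the Fuglede--Putnam theorem $Y$ commutes with every $N_i^*$ as well, hence with the spectral measure $E$ of $\boldsymbol{N}$, and in particular with the atomic projection $E(\{\lambda\})$ whose range is exactly $\bigcap_{i=1}^d\ker(N_i-\lambda_i)=\mathcal{H}_d^\lambda\subseteq\mathcal{H}$. Thus $Y$ leaves $\mathcal{H}_d^\lambda$ invariant, and since $\mathcal{H}_d^\lambda\subseteq\mathcal{H}$ we get $Xu=P_{\mathcal{H}}Yu=Yu\in\mathcal{H}_d^\lambda$ for $u\in\mathcal{H}_d^\lambda$. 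Applying the same reasoning to $X^*$, which is again $\boldsymbol{T}$-Toeplitz (take adjoints in $\sum_i T_i^*XT_i=rX$), would show that $\mathcal{H}_d^\lambda$ actually reduces $X$.

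Next I would collect the consequences. Since $\mathcal{H}_d=\bigoplus_\lambda\mathcal{H}_d^\lambda$, both $\mathcal{H}_d$ and $\mathcal{H}_c=\mathcal{H}\ominus\mathcal{H}_d$ reduce $X$; they also reduce $\boldsymbol{T}$, because on $\mathcal{H}_d^\lambda$ the operators $T_i$ and $T_i^*=P_{\mathcal{H}}N_i^*|_{\mathcal{H}}$ act as the scalars $\lambda_i$ and $\overline{\lambda_i}$. Restricting the Cartan-isometry identities $\Delta^{(\ell)}(z,w)(\boldsymbol{T},\boldsymbol{T}^*)=\binom{r}{\ell}I$ of Theorem~\ref{characterization} to the reducing subspace $\mathcal{H}_c$ shows that $\boldsymbol{T}_c$ is again a Cartan isometry, so that $\mathcal{T}(\boldsymbol{T}_c)$ is meaningful; and restricting $\sum_i T_i^*XT_i=rX$ to $\mathcal{H}_c$ shows $X|_{\mathcal{H}_c}\in\mathcal{T}(\boldsymbol{T}_c)$. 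This makes $\Phi$ well defined, and it is plainly linear and injective, as $X=X|_{\mathcal{H}_c}\oplus\bigoplus_\lambda X|_{\mathcal{H}_d^\lambda}$.

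For surjectivity I would start from $X_c\in\mathcal{T}(\boldsymbol{T}_c)$ and arbitrary $B_\lambda\in\mathcal{B}(\mathcal{H}_d^\lambda)$ with $\sup_\lambda\|B_\lambda\|<\infty$, and set $X=X_c\oplus\bigoplus_\lambda B_\lambda$ relative to $\mathcal{H}=\mathcal{H}_c\oplus\bigoplus_\lambda\mathcal{H}_d^\lambda$. As all summands reduce $\boldsymbol{T}$, the operator $\sum_i T_i^*XT_i$ is block diagonal and can be checked block by block: on $\mathcal{H}_c$ it equals $rX_c$ by hypothesis, while on $\mathcal{H}_d^\lambda$, where $T_i$ and $T_i^*$ act as $\lambda_i$ and $\overline{\lambda_i}$, it equals $\big(\sum_{i=1}^d|\lambda_i|^2\big)B_\lambda=\|\lambda\|^2B_\lambda$. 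The decisive point is that $\lambda\in\sigma_p(\boldsymbol{T})\subseteq S_\Omega$ forces $\|\lambda\|^2=\Delta^{(1)}(\lambda,\lambda)=\binom{r}{1}=r$ by Lemma~\ref{Shilov}, so this block is $rB_\lambda$ and $X\in\mathcal{T}(\boldsymbol{T})$. This is also what explains the appearance of the full algebras $\mathcal{B}(\mathcal{H}_d^\lambda)$: on an eigenspace the Toeplitz relation collapses to the scalar identity $\|\lambda\|^2=r$ and imposes no condition on the block. Finally, since $X$ is block diagonal with respect to an orthogonal decomposition, $\|X\|=\max\{\|X_c\|,\sup_\lambda\|B_\lambda\|\}$, so $\Phi$ is isometric.

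I expect the main obstacle to be the very first step, namely that $X$ preserves each \emph{individual} eigenspace $\mathcal{H}_d^\lambda$ and not merely the whole discrete part $\mathcal{H}_d$; the commutant description of $\mathcal{T}(\boldsymbol{T})$ combined with Fuglede--Putnam and the identification $\mathcal{H}_d^\lambda=E(\{\lambda\})\mathcal{K}\subseteq\mathcal{H}$ is what makes it work. A purely operator-theoretic check is also available as a safeguard: for $u\in\mathcal{H}_d^\lambda$ and $v\in\mathcal{H}_d^\mu$ the Toeplitz relation gives $(\langle\lambda,\mu\rangle-r)\langle Xu,v\rangle=0$, and since $\|\lambda\|=\|\mu\|=\sqrt{r}$ on $S_\Omega$, the equality $\langle\lambda,\mu\rangle=r$ can hold only when $\lambda=\mu$, so $X$ cannot carry $\mathcal{H}_d^\lambda$ into a different eigenspace.
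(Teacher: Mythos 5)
Your proof is correct and follows essentially the same route as the paper: both use the commutant description of $\mathcal{T}(\boldsymbol{T})$ from Remark \ref{bhcommutant} together with the spectral projection onto $\{\lambda\}$ to show that each eigenspace $\mathcal{H}_d^\lambda$ reduces $X$, and both verify the converse block by block using $\sum_{i=1}^d|\lambda_i|^2=\Delta^{(1)}(\lambda,\lambda)=\binom{r}{1}=r$ for $\lambda\in S_\Omega$. The only caveat is notational: in your surjectivity step $\|\lambda\|^2$ must denote the Euclidean quantity $\sum_i|\lambda_i|^2$, not the spectral norm that the paper writes as $\|\cdot\|$ (which equals $1$ on $S_\Omega$).
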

\begin{proof} We invoke the idea of the proof of \cite[Proposition 3.2]{DE}.
For $\lambda \in \sigma_p(\boldsymbol{T})\subseteq S_{\Omega}$,  let $P_\lambda =\Psi_{\boldsymbol N}(\chi_{\{\lambda\}})$ be the orthogonal projection onto the eigenspace $\mathcal{H}_d^\lambda.$ Note that $P_{\lambda}$ commutes with orthogonal projection $P_{\mathcal{H}}$ onto $\mathcal H$. Thus, $P_\lambda|_{\mathcal{H}}$ is an orthogonal projection onto $\mathcal{H}_d^\lambda$. If $X \in \mathcal{T}(\boldsymbol{T})$, then by Remark \ref{bhcommutant}, there exists $A\in (\boldsymbol{N})'$ such that $X=P_{\mathcal{H}}A|_{\mathcal{H}}.$ By spectral theorem,  $AP_\lambda=P_\lambda A$. Consequently, $XP_\lambda|_{\mathcal H}= P_\lambda|_{\mathcal H} X.$ This shows that $X$ reduces $\mathcal{H}_d^\lambda$ for every $\lambda \in \sigma_p(\boldsymbol{T})$. Let $X_c= X|_{\mathcal{H}_c}$, then it is easy to verify that $\sum_{i=1}^d (\boldsymbol{T}_c)_i^* X_c (\boldsymbol{T}_c)_i= r X_c.$

Conversely, assume that $X= X_c \oplus \bigoplus_{\lambda \in \sigma_p(\boldsymbol{T})}X_\lambda \in \mathcal{B}(\mathcal{H}),$ where $X_c \in \mathcal{T}(\boldsymbol{T}_c)$ and $X_\lambda \in \mathcal{B}(\mathcal{H}_d^\lambda)$. For any $h = h_c\oplus \bigoplus_{\lambda \in \sigma_{p}(\boldsymbol{T})}h_\lambda$ in $\mathcal H_c \oplus \mathcal H_d$, we have 
    \beqn
        \sum_{i=1}^d \Inp{T_i^* X T_i h}{h} &=& \sum_{i=1}^d \big \langle (\boldsymbol{T}_c)_i^* X_c (\boldsymbol{T}_c)_i h_c,h_c \big \rangle + \sum_{\lambda \in \sigma_p(\boldsymbol{T})} \sum_{i=1}^d \Inp{(\boldsymbol{T}|_{\mathcal{H}_d^\lambda})_i^* X_\lambda (\boldsymbol{T}|_{\mathcal{H}_d^\lambda})_i h_\lambda}{h_\lambda}\\
        &=& r \inp{X_c h_c}{h_c} + \sum_{\lambda \in \sigma_p(\boldsymbol{T})} \sum_{i=1}^d \inp{ \lambda_i X_\lambda h_\lambda}{\lambda_i h_\lambda}\\
        &=& r \inp{X_c h_c}{h_c} + \sum_{\lambda \in \sigma_p(\boldsymbol{T})} r \inp{ X_\lambda h_\lambda}{ h_\lambda} = r \inp{ X h}{h}.\\
    \eeqn
 This completes the proof.
\end{proof}
The following result can be obtained as a consequence of the above proposition and following the approach used in the proof of \cite[Theorem 3.3]{DE}. Note that the forward part is also mentioned in \cite[Corollary 3.6(b)]{DEE} for an $A$-isometry. 
\begin{corollary}
A Cartan isometry $\boldsymbol{T}$ has empty point spectrum if and only if the only compact $\boldsymbol{T}$-Toeplitz operator is the zero operator.
\end{corollary}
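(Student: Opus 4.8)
The plan is to read off both implications from the preceding proposition, which supplies the orthogonal decomposition
\[
\mathcal{T}(\boldsymbol{T}) \cong \mathcal{T}(\boldsymbol{T}_c) \oplus \bigoplus_{\lambda \in \sigma_p(\boldsymbol{T})} \mathcal{B}(\mathcal{H}_d^\lambda),
\]
and to handle the continuous part $\mathcal{H}_c$ separately from the discrete part $\mathcal{H}_d$. The reverse implication is the easier one, and I would argue it by contraposition: if $\sigma_p(\boldsymbol{T}) \neq \emptyset$, choose $\lambda \in \sigma_p(\boldsymbol{T})$, so that the eigenspace $\mathcal{H}_d^\lambda = \bigcap_{i=1}^d \ker(T_i - \lambda_i)$ is a nonzero reducing subspace. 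Taking any nonzero finite-rank operator $X_\lambda \in \mathcal{B}(\mathcal{H}_d^\lambda)$ and declaring all other summands to be zero produces, via the decomposition above, a nonzero finite-rank---hence compact---element of $\mathcal{T}(\boldsymbol{T})$. Thus if the only compact $\boldsymbol{T}$-Toeplitz operator is the zero operator, then $\sigma_p(\boldsymbol{T})$ must be empty.

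For the forward implication, assume $\sigma_p(\boldsymbol{T}) = \emptyset$. Since $\sigma_p(\boldsymbol{T}) = \sigma_p(\boldsymbol{N})$ and the point spectrum of the minimal normal extension $\boldsymbol{N}$ records exactly the atoms of the scalar spectral measure $\mu$, the hypothesis forces $\mathcal{H}_d = \{0\}$ and $\mathcal{H} = \mathcal{H}_c$; equivalently, $\mu$ is continuous. Let $X \in \mathcal{T}(\boldsymbol{T})$ be compact; the goal is to show $X = 0$. The key tool is Proposition \ref{Jequiv}: such an $X$ satisfies $J^* X J = X$ for every isometry $J$ in the dual algebra $\mathcal{A}_{\boldsymbol{T}}$, and every such isometry has the form $J = \gamma_{\boldsymbol{T}}(\theta)$ for some $\mu$-inner function $\theta \in H^\infty(\mu)$.

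The plan is then to exploit the continuity of $\mu$ by following the approach of \cite[Theorem 3.3]{DE}. Because the triple $(A(\Omega)|_{S_\Omega}, S_\Omega, \mu)$ is regular in the sense of Aleksandrov (as recorded in the proof of Proposition \ref{Jequiv}) and $\mu$ has no atoms, one can produce a sequence of $\mu$-inner functions $\theta_n$ converging weak-$*$ to $0$ in $H^\infty(\mu)$. Since $\gamma_{\boldsymbol{T}}$ is a weak-$*$ homeomorphism, the corresponding isometries $J_n = \gamma_{\boldsymbol{T}}(\theta_n)$ converge to $0$ in the weak operator topology. For fixed $h, g \in \mathcal{H}$ the vectors $J_n h$ then converge weakly to $0$, so compactness of $X$ gives $\|X J_n h\| \to 0$, whence
\[
\inp{J_n^* X J_n h}{g} = \inp{X J_n h}{J_n g} \longrightarrow 0.
\]
As $J_n^* X J_n = X$ for every $n$, we conclude $\inp{Xh}{g} = 0$ for all $h, g \in \mathcal{H}$, that is, $X = 0$.

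I expect the main obstacle to be the production of the weak-$*$ null sequence of $\mu$-inner functions in the continuous case; this is precisely the technical core imported from \cite[Theorem 3.3]{DE}, resting on the Aleksandrov regularity of $(A(\Omega)|_{S_\Omega}, S_\Omega, \mu)$ and the absence of atoms in $\mu$. Once this sequence is available, the Brown--Halmos style vanishing argument above is routine, and the remainder is bookkeeping with the decomposition furnished by the preceding proposition.
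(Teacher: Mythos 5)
Your proposal is correct and follows essentially the same route as the paper, which derives the corollary from the preceding decomposition $\mathcal{T}(\boldsymbol{T}) \cong \mathcal{T}(\boldsymbol{T}_c) \oplus \bigoplus_{\lambda \in \sigma_p(\boldsymbol{T})} \mathcal{B}(\mathcal{H}_d^\lambda)$ together with the argument of \cite[Theorem 3.3]{DE} (weak-$*$ null sequences of $\mu$-inner functions obtained from Aleksandrov regularity when $\mu$ is continuous). Your write-up in fact supplies more detail than the paper, which only sketches this reduction.
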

\begin{remark}
Since the Szeg\"o shift has an empty point spectrum, Theorem \ref{ctoperator} can also be deduced from the above corollary.    
\end{remark}

Let $\mathcal{S} \subseteq \mathcal{B}(\mathcal{H})$ be a family of bounded linear operators. We denote $Lat(\mathcal{S})$ to be the lattice of all closed subspaces of $\mathcal{H}$ invariant under $\mathcal{S}$. 
  The set \[Alg Lat(\mathcal{S})= \{A  \in \mathcal{B}(\mathcal{H}): Lat (\mathcal{S})\subseteq Lat(A) \}\] is a WOT closed subalgebra of $\mathcal{B}(\mathcal{H})$ and contains the closed WOT algebra $\mathcal{W}_{\mathcal{S}}$ generated by $\mathcal{S}$ and identity $I_{\mathcal{H}}$.
The family $\mathcal{S}$ is called {\it reflexive} if $Alg Lat(\mathcal{S})=\mathcal{W}_{\mathcal{S}}$. A commuting reflexive family $\mathcal{S} \subseteq \mathcal{B}(\mathcal{H})$ possesses a non-trivial joint invariant subspace. Bercovici in \cite[Theorem 2.4]{Ber} proved that every commuting family $\mathcal{S} \subseteq \mathcal{B}(\mathcal{H})$ of isometries is reflexive. The reflexivity of spherical isometry has been proved in \cite[Theorem 5]{Did}. Since a Cartan isometry is a regular $A$- isometry, the reflexivity of a Cartan isometry follows from \cite[Theorem 1]{Esch1}.
  \begin{proposition}
  Every Cartan isometry is reflexive.
\end{proposition}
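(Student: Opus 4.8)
The plan is to avoid any direct lattice computation and instead deduce reflexivity from the general theory of $A$-isometries, thereby reducing the entire statement to a verification that a Cartan isometry meets the hypotheses of Eschmeier's reflexivity theorem. The guiding observation is that reflexivity of isometric-type objects is, by now, a structural phenomenon: Bercovici's theorem covers commuting families of isometries, \cite[Theorem 5]{Did} covers spherical isometries, and \cite[Theorem 1]{Esch1} subsumes these in the framework of regular $A$-isometries. So I would aim to place a Cartan isometry into that last framework.

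First I would record the structural facts already assembled in the paragraph preceding Proposition~\ref{Jequiv}. Writing $A = A(\Omega)|_{S_\Omega}$, polynomial convexity of $\Omega$ together with the Oka--Weil theorem gives density of the analytic polynomials in $A \subseteq C(S_\Omega)$; since the polynomials already lie in the restriction algebra $\mathcal{R}(\boldsymbol{T})$ of a Cartan isometry $\boldsymbol{T}$, and $\mathcal{R}(\boldsymbol{T})$ is weak* closed in $L^\infty(\mu)$, we obtain $A \subseteq \mathcal{R}(\boldsymbol{T})$. This exhibits $\boldsymbol{T}$ as an $A$-isometry in the sense of \cite[Definition 2.1]{DEE}.

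Next I would invoke the regularity already established in the course of proving Proposition~\ref{Jequiv}: by \cite[Proposition 18]{Al} and the subsequent discussion, the triple $(A(\Omega)|_{S_\Omega}, S_\Omega, \mu)$ is regular in the sense of Aleksandrov, where $\mu$ is the scalar spectral measure of the minimal normal extension $\boldsymbol{N}$ of $\boldsymbol{T}$. Hence $\boldsymbol{T}$ is a \emph{regular} $A$-isometry. The argument then concludes with a single citation: by Eschmeier's theorem \cite[Theorem 1]{Esch1}, every regular $A$-isometry is reflexive, and applying this to $\boldsymbol{T}$ yields the claim.

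The genuine content here is not the lattice argument but the regularity hypothesis, which is precisely the condition that cannot be dropped in the $A$-isometry framework; the one point to be careful about is that Aleksandrov regularity of $(A(\Omega)|_{S_\Omega}, S_\Omega, \mu)$ holds for an arbitrary Cartan isometry and not merely for the Szeg\"o shift. Since that verification has already been carried out in full generality for Proposition~\ref{Jequiv}, the present proof is purely a matter of assembling these pieces and citing \cite[Theorem 1]{Esch1}.
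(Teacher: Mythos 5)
Your proposal is correct and follows exactly the paper's own route: the paper likewise notes that a Cartan isometry is a regular $A$-isometry (via $A(\Omega)|_{S_\Omega}\subseteq\mathcal{R}(\boldsymbol{T})$ and the Aleksandrov regularity established for Proposition \ref{Jequiv}) and then cites \cite[Theorem 1]{Esch1}. There is nothing to add.
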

\begin{remark}
The above result can also be proved using a similar idea of \cite[Section $3$(A)]{DE}.  
\end{remark}

\subsection{Dual Toeplitz operators}
Let $H^2(S_{\Omega})$ be the Hardy space over an irreducible bounded symmetric domain $\Omega.$
Consider the orthogonal complement $H^2(S_\Omega)^\perp$ of $H^2(S_{\Omega})$ in $L^2(S_\Omega).$ For $\phi \in L^\infty(S_\Omega)$, the dual Toeplitz operator $S_\phi $ on $H^2(S_\Omega)^\perp$ is defined by \[ S_\phi (h):= (I-P_{H^2(S_\Omega)}) \phi h, \quad \text{ for all } h \in H^2(S_\Omega)^\perp. \] 
The Hankel operator $H_\phi: H^2(S_\Omega) \to H^2(S_\Omega)^\perp$ is defined by $H_\phi (f)= (I-P_{H^2(S_\Omega)})(\phi f)$ for all $f \in H^2(S_\Omega).$ 
Therefore, for any $\phi \in L^\infty(S_\Omega)$, the multiplication operator $M_{\phi}$ on $L^2(S_\Omega)=H^2(S_\Omega)\oplus H^2(S_\Omega)^\perp$ has the following decomposition
 \begin{equation}\label{eqdualbh}
      M_\phi=
\begin{pmatrix}
   T_\phi & H_{\bar{\phi}}^*\\
    H_\phi & S_\phi\\
\end{pmatrix}.
 \end{equation}
 
Note that if $\phi \in L^\infty(S_\Omega)$ is holomorphic symbol then $H_\phi=0.$ Thus for any coordinate function $z_i, i=1,\ldots, d,$ we have
\begin{equation}\label{eqdual}
  M_{z_i}=
\begin{pmatrix}
T_{z_i} & H_{\bar{z_i}}^*\\
0 & S_{z_i}\\
\end{pmatrix} \; \text{and} \;
M_{\bar{z_i}}= 
\begin{pmatrix}
    T_{\bar{z_i}} &0\\
    H_{\bar{z_i}}& S_{\bar{z_i}}\\
\end{pmatrix}.
\end{equation}
Recall that a subnormal operator tuple $\boldsymbol{T}$ defined on $\mathcal H$ is called pure if there is no non-zero reducing subspace $M \subseteq \mathcal{H}$ for $\boldsymbol{T}$ such that the restriction operator tuple  $\boldsymbol{T}|_M=(T_1|_M, \ldots, T_d|_M)$ is commuting normal tuple.
\begin{lemma}  
    Consider the commuting tuple $\boldsymbol S_{\bar{z}}=(S_{\bar{z_1}},\ldots,S_{\bar{z_d}})$ of dual Toeplitz operators $S_{\bar{z_1}} \ldots, S_{\bar{z_d}}.$ Then $\boldsymbol S_{\bar{z}}$ is a Cartan isometry with $\boldsymbol M_{\bar{z}}=(M_{\bar{z_i}}, \ldots, M_{\bar{z_d}})$  being its minimal normal extension.
\end{lemma}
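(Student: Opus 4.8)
The plan is to exhibit $\boldsymbol M_{\bar z}$ as a normal extension of $\boldsymbol S_{\bar z}$, verify its minimality, and then place the Taylor joint spectrum of $\boldsymbol M_{\bar z}$ inside $S_\Omega$; the conclusion is then immediate from Definition \ref{defn}. First I would observe that $H^2(S_\Omega)$ is invariant under each $M_{z_i}$, since multiplication by a coordinate function preserves holomorphy, so its orthogonal complement $H^2(S_\Omega)^\perp$ is invariant under $M_{z_i}^*=M_{\bar z_i}$. This is exactly the vanishing of the $(1,2)$-entry in the block decomposition \eqref{eqdual}, and reading off the $(2,2)$-entry gives $S_{\bar z_i}=M_{\bar z_i}\big|_{H^2(S_\Omega)^\perp}$. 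As the $M_{\bar z_i}$ are commuting normal operators on $L^2(S_\Omega)$ leaving $H^2(S_\Omega)^\perp$ invariant, $\boldsymbol M_{\bar z}$ is a commuting normal extension of $\boldsymbol S_{\bar z}$; in particular $\boldsymbol S_{\bar z}$ is subnormal and its components commute.

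For minimality I would show that the smallest reducing subspace of $\boldsymbol M_{\bar z}$ containing $H^2(S_\Omega)^\perp$ is all of $L^2(S_\Omega)$. Since the polynomials in $z$ and $\bar z$ are weak$^*$ dense in $L^\infty(S_\Omega)$ (as used in Lemma \ref{bh3}), the von Neumann algebra generated by $\{M_{\bar z_i},M_{\bar z_i}^*\}$ is the full maximal abelian algebra $\{M_\varphi:\varphi\in L^\infty(S_\Omega)\}$, so every reducing subspace has the form $\chi_E L^2(S_\Omega)$ for some Borel set $E\subseteq S_\Omega$. If $\chi_E L^2(S_\Omega)\supseteq H^2(S_\Omega)^\perp$, then $\chi_{S_\Omega\setminus E}\,h=0$ for every $h\in H^2(S_\Omega)^\perp$. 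The coordinate functions lie in $\mathcal P_{(1)}$, and each $\bar z_i$ is a non-constant anti-holomorphic function, hence $\bar z_i\in H^2(S_\Omega)^\perp$; therefore every $z_i$ vanishes a.e.\ on $S_\Omega\setminus E$. Because $\|z\|=1$ on $S_\Omega$ forces $z\neq 0$ there, $S_\Omega\setminus E$ is a null set, i.e.\ $E=S_\Omega$. Thus $\boldsymbol M_{\bar z}$ is the minimal normal extension of $\boldsymbol S_{\bar z}$.

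It remains to show $\sigma(\boldsymbol M_{\bar z})\subseteq S_\Omega$. By the functional calculus for the normal tuple $\boldsymbol M_{\bar z}$, its Taylor joint spectrum equals the essential range of $w\mapsto(\bar w_1,\dots,\bar w_d)$ on $(S_\Omega,dz)$, namely $\{\bar w:w\in S_\Omega\}$. The whole argument therefore reduces to the single point that $S_\Omega$ is invariant under complex conjugation, and I would deduce this from Lemma \ref{Shilov}: it suffices to check $\Delta^{(\ell)}(\bar z,\bar z)=\Delta^{(\ell)}(z,z)$ for all $\ell$. The function $\Delta(z,z)=\prod_{j=1}^r(1-t_j^2)$ depends only on the singular numbers of $z$, and for each classical type the explicit determinant/Pfaffian form of the Jordan triple determinant gives $\Delta(\bar z,\bar z)=\overline{\Delta(z,z)}=\Delta(z,z)$; since conjugation preserves the bidegree $(\ell,\ell)$, separating graded pieces yields $\Delta^{(\ell)}(\bar z,\bar z)=\Delta^{(\ell)}(z,z)$. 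Consequently $z\in S_\Omega$ implies $\Delta^{(\ell)}(\bar z,\bar z)=\binom{r}{\ell}$ for all $\ell$, so $\bar z\in S_\Omega$ and $\{\bar w:w\in S_\Omega\}=S_\Omega$. Hence $\sigma(\boldsymbol M_{\bar z})\subseteq S_\Omega$, and $\boldsymbol S_{\bar z}$ is a Cartan isometry. I expect the conjugation-invariance of the Shilov boundary to be the main obstacle; equivalently, one may bypass the spectrum computation and verify the hereditary identity of Theorem \ref{characterization} directly, noting that $\sum_\alpha \psi_\alpha^{(\ell)}(\boldsymbol S_{\bar z})^*\psi_\alpha^{(\ell)}(\boldsymbol S_{\bar z})$ is the compression to $H^2(S_\Omega)^\perp$ of $M_{\Delta^{(\ell)}(\bar z,\bar z)}$, which collapses to $\binom{r}{\ell}I$ by the same fact $\Delta^{(\ell)}(\bar z,\bar z)=\binom{r}{\ell}$ on $S_\Omega$.
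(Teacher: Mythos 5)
Your proof is correct, and it takes a more self-contained route than the paper's. The paper's argument is two lines: it reads off from the block decomposition \eqref{eqdual} that $\boldsymbol S_{\bar z}$ is the restriction of the normal tuple $\boldsymbol M_{\bar z}$ to the invariant subspace $H^2(S_\Omega)^\perp$, and then deduces minimality by citing Athavale's duality result for subnormal tuples (\cite[Remark 3]{At3}) together with the purity of the Szeg\"o shift $\boldsymbol T_z$. You replace that citation with a direct argument: since polynomials in $z,\bar z$ are weak$^*$ dense in $L^\infty(S_\Omega)$, the reducing subspaces of $\boldsymbol M_{\bar z}$ are exactly the ranges of the projections $M_{\chi_E}$, and the observations that $\bar z_i\in H^2(S_\Omega)^\perp$ and that $z\neq 0$ on $S_\Omega$ force $E=S_\Omega$ up to a null set; this is a clean substitute for the purity-plus-duality argument and uses only facts already present in the paper (Lemma \ref{bh3} and its ingredients). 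You also make explicit a point the paper passes over in silence: the joint spectrum of $\boldsymbol M_{\bar z}$ is the conjugate set $\{\bar w: w\in S_\Omega\}$, so one genuinely needs $S_\Omega$ to be conjugation-invariant before concluding $\sigma(\boldsymbol M_{\bar z})\subseteq S_\Omega$. Your justification (the singular values of $\bar z$ coincide with those of $z$ in the classical matrix realizations, hence $\Delta^{(\ell)}(\bar z,\bar z)=\Delta^{(\ell)}(z,z)$ and Lemma \ref{Shilov} applies) is sound, and your closing remark that one may instead verify the hereditary identity of Theorem \ref{characterization} directly, using $\sum_\alpha|\psi_\alpha^{(\ell)}(\bar z)|^2=\Delta^{(\ell)}(\bar z,\bar z)=\binom{r}{\ell}$ on $S_\Omega$, is a legitimate second route. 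The net effect is a longer but more complete proof; what the paper's version buys is brevity at the cost of an external reference and an unstated conjugation-invariance step.
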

\begin{proof}
 It follows from the equation \eqref{eqdual} that $\boldsymbol S_{\bar{z}}$ is a restriction of $\boldsymbol M_{\bar z}$. This shows that $\boldsymbol S_{\bar{z}}$ is a Cartan isometry.  Since the Szeg\"{o} shift $\boldsymbol T_z$ is pure, by \cite[Remark 3]{At3}, $\boldsymbol M_{\bar z}$ is the minimal normal extension of $\boldsymbol S_{\bar{z}}$.
\end{proof}
Now we will give a Brown-Halmos type condition for dual Toeplitz operators.
\begin{theorem}
    A bounded linear operator $X$ on $H^2(S_\Omega)^\perp$ is a dual Toeplitz operator if and only if it satisfies the relation \[ \sum_{i=1}^d S_{\bar{z_i}}^* X S_{\bar{z_i}}=rX.\]
\end{theorem}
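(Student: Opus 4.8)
The plan is to prove both implications by transferring the question to the ambient multiplication operators on $L^2(S_\Omega)$, exploiting the block-triangular decompositions \eqref{eqdualbh} and \eqref{eqdual} together with the fact, just established in the preceding lemma, that $\boldsymbol S_{\bar z}$ is a Cartan isometry whose minimal normal extension is $\boldsymbol M_{\bar z}$. The necessity will come from compressing a clean identity on $L^2(S_\Omega)$ to a corner, and the sufficiency will be an assembly of Lemma \ref{prunaru}, Fuglede's theorem, and Lemma \ref{bh3}.

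For the necessity, suppose $X=S_\phi$ for some $\phi\in L^\infty(S_\Omega)$. The key observation is the operator identity $\sum_{i=1}^d M_{z_i}M_\phi M_{\bar{z_i}}=rM_\phi$ on $L^2(S_\Omega)$. This holds because the multiplication operators mutually commute, so the left-hand side equals $M_{(\sum_{i=1}^d|z_i|^2)\phi}$, and $\sum_{i=1}^d|z_i|^2=\Delta^{(1)}(z,z)=\binom{r}{1}=r$ almost everywhere on $S_\Omega$ by Lemma \ref{Shilov}. I would then compress this identity to the lower-right corner of $L^2(S_\Omega)=H^2(S_\Omega)\oplus H^2(S_\Omega)^\perp$. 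Reading \eqref{eqdual}, $M_{z_i}$ is upper-triangular with $(2,2)$-entry $S_{z_i}$ and $M_{\bar{z_i}}$ is lower-triangular with $(2,2)$-entry $S_{\bar{z_i}}$, so a direct block multiplication shows that the $(2,2)$-block of $M_{z_i}M_\phi M_{\bar{z_i}}$ is exactly $S_{z_i}S_\phi S_{\bar{z_i}}$. Hence the $(2,2)$-block of the identity reads $\sum_{i=1}^d S_{z_i}S_\phi S_{\bar{z_i}}=rS_\phi$, and since $S_{\bar{z_i}}^*=S_{z_i}$ this is precisely $\sum_{i=1}^d S_{\bar{z_i}}^*XS_{\bar{z_i}}=rX$.

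For the sufficiency, assume $\sum_{i=1}^d S_{\bar{z_i}}^*XS_{\bar{z_i}}=rX$. Because $\boldsymbol S_{\bar z}$ is a Cartan isometry, its $\ell=1$ relation gives $\sum_{i=1}^d S_{\bar{z_i}}^*S_{\bar{z_i}}=rI$, so Lemma \ref{prunaru} applies with $\boldsymbol S=\boldsymbol S_{\bar z}$ and minimal normal extension $\boldsymbol U=\boldsymbol M_{\bar z}$: there exists $Y$ in the commutant $(\boldsymbol M_{\bar z})'$ with $X=P_{H^2(S_\Omega)^\perp}Y|_{H^2(S_\Omega)^\perp}$. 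Now $Y$ commutes with each $M_{\bar{z_i}}$, and by Fuglede's theorem it also commutes with each $M_{\bar{z_i}}^*=M_{z_i}$; applying Lemma \ref{bh3} to the finite positive regular measure $dz$ on the compact set $S_\Omega$ yields $Y=M_\phi$ for some $\phi\in L^\infty(S_\Omega)$. Then $X=P_{H^2(S_\Omega)^\perp}M_\phi|_{H^2(S_\Omega)^\perp}=(I-P_{H^2(S_\Omega)})\phi(\cdot)=S_\phi$, the required dual Toeplitz operator.

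The computations in both directions are largely mechanical once this framework is in place; the only point demanding care is the necessity's block computation, where one must confirm that the off-diagonal Hankel terms $H_\phi$ and $H_{\bar\phi}^*$ drop out of the $(2,2)$-corner. This is exactly where the triangular shapes of $M_{z_i}$ and $M_{\bar{z_i}}$ in \eqref{eqdual} — equivalently, the vanishing $H_{z_i}=0$ of Hankel operators with holomorphic symbol — are essential. Since the genuine substance of the sufficiency is imported wholesale from Lemma \ref{prunaru} and Lemma \ref{bh3}, I anticipate no serious obstacle beyond this bookkeeping.
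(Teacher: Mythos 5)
Your proposal is correct and follows essentially the same route as the paper: the sufficiency is the identical chain (Lemma \ref{prunaru} applied to $\boldsymbol S_{\bar z}$ with minimal normal extension $\boldsymbol M_{\bar z}$, then Fuglede and Lemma \ref{bh3} to get $Y=M_\phi$), and the necessity rests on the same fact $\sum_{i=1}^d|z_i|^2=\Delta^{(1)}(z,z)=r$ on $S_\Omega$. The only cosmetic difference is that you obtain the necessity by compressing $\sum_i M_{z_i}M_\phi M_{\bar z_i}=rM_\phi$ to the $(2,2)$-corner via the triangular blocks in \eqref{eqdual}, whereas the paper computes $\langle\sum_i S_{\bar z_i}^*XS_{\bar z_i}h,g\rangle$ directly, using that $H^2(S_\Omega)^\perp$ is invariant under each $M_{\bar z_i}$.
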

\begin{proof}
Let $X$ be the dual Toeplitz operator with symbol $\phi \in L^\infty (S_\Omega)$. Then for any $h, g \in H^2(S_\Omega)^\perp$,
\beqn
\left \langle \sum_{i=1}^d S_{\bar{z_i}}^* X S_{\bar{z_i}}h, g\right \rangle_{H^2(S_\Omega)^\perp} 
&=& \sum_{i=1}^d \inp{X \bar{z_i}h}{\bar{z_i} g}_{H^2(S_\Omega)^\perp}\\
&=& \sum_{i=1}^d \inp{\phi \bar{z_i}h}{\bar{z_i} g}_{L^2(S_\Omega)}\\
&=&  r \int_{S_\Omega} \phi(z) h(z) \overline{g(z)}d\mu(z)
= r \inp{Xh}{g}_{H^2 (S_\Omega)^\perp}.
\eeqn
Conversely, let $X \in \mathcal{B}(H^2(S_\Omega)^\perp)$ satisfy $\sum_{i=1}^d S_{\bar{z_i}}^* X S_{\bar{z_i}}=rX$. Then by remark \ref{bhcommutant}, $X= P_{H^2(S_\Omega)^\perp} Y|_{H^2 (S_\Omega)^\perp}$ for some $Y \in (\boldsymbol M_{\bar z})'= (\boldsymbol M_z)'$. By Lemma \ref{bh3}, $Y= M_\phi$ for some $\phi \in L^\infty (S_\Omega)$. This completes the proof.
\end{proof}

\appendix
\section{Description of other boundary components}

For a tuple $t=(t_1,\ldots,t_r)$ of real variables $t_1,\ldots,t_r$, the $\ell^{th}$-symmetric polynomial $\sigma_{\ell}(t)$ is given by
$\sigma_{\ell}(t)=\sigma_{\ell}(t_1,\ldots,t_r)=\sum_{1 \leq j_1 < \ldots <j_{\ell} \leq r}t_{j_1}\cdots t_{j_\ell}, \; \ell=1,\ldots,r$. For $\ell=0$, we fix the convention that $\sigma_0(t)=1$ even if the set of variables is empty. Before we describe the other boundary components, first, we prove the following lemma using the generating function for elementary symmetric polynomials.  
\begin{lemma} \label{numericalequation}
  Let $q\in \{1,\ldots,r\}$ be a fixed positive integer, then
   \beq \label{sym-poly} \sigma_{\ell}(t) =\sum_{i\geq \rm{max}(\ell-(r-q),0)}^{\rm{min}(q,\ell)} \binom{q}{i}\sigma_{\ell-i}(t_{q+1},\ldots,t_r), \;\; \text{for all } 1 \leq \ell \leq r \eeq  if and only if $t_1=t_2=\cdots=t_q=1$.
\end{lemma}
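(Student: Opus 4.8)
The plan is to encode the stated identity as a single polynomial identity through the generating function of the elementary symmetric polynomials, and then extract both implications by comparing two factorizations of that polynomial.

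First I would introduce the generating polynomial
\[ F(x)\;=\;\prod_{j=1}^{r}(1+x\,t_j)\;=\;\sum_{\ell=0}^{r}\sigma_{\ell}(t)\,x^{\ell}, \]
so that the left-hand side of \eqref{sym-poly} is precisely the coefficient of $x^{\ell}$ in $F$. On the other side, writing $(1+x)^{q}=\sum_{i=0}^{q}\binom{q}{i}x^{i}$ and $\prod_{j=q+1}^{r}(1+x\,t_j)=\sum_{m=0}^{r-q}\sigma_{m}(t_{q+1},\ldots,t_r)\,x^{m}$, the Cauchy product yields as the coefficient of $x^{\ell}$
\[ \sum_{i+m=\ell}\binom{q}{i}\sigma_{m}(t_{q+1},\ldots,t_r)\;=\;\sum_{i=\max(\ell-(r-q),0)}^{\min(q,\ell)}\binom{q}{i}\,\sigma_{\ell-i}(t_{q+1},\ldots,t_r), \]
the summation bounds being forced exactly by $0\le i\le q$ (the degree of $(1+x)^{q}$) and $0\le\ell-i\le r-q$ (the degree of $\prod_{j=q+1}^{r}(1+x\,t_j)$). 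Hence the right-hand side of \eqref{sym-poly} is the coefficient of $x^{\ell}$ in $(1+x)^{q}\prod_{j=q+1}^{r}(1+x\,t_j)$, and since the case $\ell=0$ reduces to the trivial identity $1=1$, the system \eqref{sym-poly} for all $1\le\ell\le r$ holds if and only if the polynomial identity
\[ F(x)\;=\;(1+x)^{q}\prod_{j=q+1}^{r}(1+x\,t_j) \]
holds in $\mathbb{R}[x]$.

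For the forward direction I would simply substitute $t_1=\cdots=t_q=1$: then $\prod_{j=1}^{q}(1+x\,t_j)=(1+x)^{q}$, so the displayed polynomial identity is immediate and comparing coefficients of $x^{\ell}$ recovers \eqref{sym-poly}. For the converse I would start from that polynomial identity and cancel the common factor $\prod_{j=q+1}^{r}(1+x\,t_j)$; this factor is a nonzero element of the integral domain $\mathbb{R}[x]$ (its constant term is $1$), so the cancellation is legitimate and leaves $\prod_{j=1}^{q}(1+x\,t_j)=(1+x)^{q}$. Comparing leading coefficients gives $t_1\cdots t_q=1$, so in particular every $t_j\neq 0$ and the left side has degree exactly $q$; comparing roots, the right side has the single root $x=-1$ of multiplicity $q$, whereas each factor $1+x\,t_j$ vanishes only at $x=-1/t_j$, forcing $-1/t_j=-1$, that is $t_j=1$, for every $1\le j\le q$.

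The argument is essentially bookkeeping, and I do not foresee a genuine obstacle. The only point deserving an explicit line is the cancellation in the converse: one must note that $\prod_{j=q+1}^{r}(1+x\,t_j)$ is a nonzero polynomial and that $\mathbb{R}[x]$ is an integral domain, after which the elementary degree-and-root comparison turns $\prod_{j=1}^{q}(1+x\,t_j)=(1+x)^{q}$ into $t_1=\cdots=t_q=1$.
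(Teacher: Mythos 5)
Your proof is correct and follows essentially the same route as the paper: both encode the system \eqref{sym-poly} via the generating function $\prod_{j=1}^r(1+t_jx)$ and reduce the converse to the factorization $\prod_{j=1}^{q}(1+t_jx)=(1+x)^{q}$. The only cosmetic difference is that you finish by comparing roots of this identity while the paper compares coefficients to get $\sigma_{\ell}(t_1,\ldots,t_q)=\binom{q}{\ell}$; your explicit justification of cancelling the nonzero factor $\prod_{j=q+1}^{r}(1+t_jx)$ is a point the paper leaves implicit.
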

\begin{proof}
Note that $$ \prod_{i=1}^r (1+t_i x)=\sum_{\ell=0}^r \sigma_{\ell}(t_1,\ldots,t_r) x^\ell.$$ If $t_1=\cdots=t_q=1$ for some positive integer $1 \leq q \leq r,$ then  $$(1+x)^q \left ( \prod_{i={q+1}}^r(1+t_i x)\right )= \sum_{\ell=0}^r \sigma_{\ell}(t_1,\ldots,t_r) x^\ell.$$ 
Equivalently, we get
$$\left(\sum_{i=0}^q \binom{q}{i} x^i\right)\left( \sum_{j=0}^{r-q} \sigma_{j}(t_{q+1},\ldots,t_r)x^j\right)=\sum_{\ell=0}^r \sigma_{\ell}(t_1,\ldots,t_r) x^\ell.$$
By comparing the coefficient of $x^\ell$ on both sides, we get 
$$\sigma_{\ell}(t) =\sum_{i\geq \rm{max}(\ell-(r-q),0)}^{\rm{min}(q,\ell)} \binom{q}{i}\sigma_{\ell-i}(t_{q+1},\ldots,t_r), \;\; \text{for all } 1 \leq \ell \leq r.$$

Conversely, let us assume that the above equation holds.
 By multiplying $x^\ell$ on both sides of the equation and summing over $\ell=0,\ldots,r$, we get 
 \beqn \prod_{i=1}^r (1+t_ix) &=& \sum_{\ell=0}^r \left(\sum_{i\geq \rm{max}(\ell-(r-q),0)}^{\rm{min}(q,\ell)} \binom{q}{i}\sigma_{\ell-i}(t_{q+1},\ldots,t_r)\right)x^\ell\\
 &=& (1+x)^q \left( \prod_{i={q+1}}^r (1+t_i x)\right ).\\
\eeqn
Therefore, $$ (1+x)^q= \prod_{i=1}^q (1+t_i x)= \sum_{\ell=0}^q \sigma_{\ell} (t_1,\ldots,t_q) x^\ell.$$
Now by comparing coefficients of $x^\ell$, we get $\sigma_{\ell}(t_1,\ldots,t_q)=\binom{q}{\ell}$ for all $1 \leq \ell \leq q.$  This shows that $t_1=\cdots=t_q=1.$
\end{proof}

Consider the type-I domain $\Omega=\{z \in \mathbb C^{r \times (r+b)}:  I_r-zz^* >0\}$ of rank $r$, where $I_r$ is the identity matrix of order $r.$ For each $i=1,\ldots,r$, let $e_i$ denote the matrix of order $r \times (r+b)$ with only nonzero entry $1$ at $(i,i)^{th}$ place. The set $\{e_1,\ldots,e_r\}$ forms a Jordan frame (maximal family of orthogonal, minimal tripotents) for $\mathbb C^{r\times(r+b)}.$ 

For a fixed integer $1 \leq q < r$, $w=e_1+\cdots+e_q$ is a tripotent of rank $q$. Let $S_q$ be the set of all tripotents of rank $q$. Since $S_q$ is an irreducible $\mathbb K$-orbit (cf. \cite[page 18]{Arazy}), $S_q=\{k(w): k \in \mathbb K\}$. The $q^{th}$ boundary component $\partial\Omega_{q}$ is given by $\partial\Omega_{q}=\bigcup_{c\in S_q}c+\Omega_c$, where $\Omega_c$ is the intersection of $\Omega$ and the subspace $Z_0(c)$ corresponding to the eigenvalue $0$ in the Peirce decomposition (see \cite[equation 1.5.50]{upmeier}). Therefore, by \cite[equation 2.16]{Arazy}, $\partial\Omega_q= \{k(w+\Omega_w ): k \in \mathbb K\}.$ It follows from the example (1.5.51) in \cite{upmeier} that \[w+\Omega_w=\left \{\begin{pmatrix}
    I_{q} & 0_{q \times (r-q+b)}\\
    0_{(r-q) \times q} & z'
\end{pmatrix}: z' \in \Omega' \right \},\] where $\Omega'=\{z' \in \mathbb C^{(r-q)\times (r-q+b)}: I_{r-q}-z' {z'}^* >0 \}$ is the type-I domain of rank $r-q.$ Let $\mathbb K'$ be the subgroup of linear automorphisms in $\rm{Aut}(\Omega'),$ then (cf. \cite[page 10]{Arazy}) $$\mathbb K'=\left \{ k'=\begin{pmatrix}
    u' & 0\\
    0 & v'
\end{pmatrix}: \rm{det}(k')=1\;\; \text{ where } u' \in \mathcal{U}(r-q),\;\; v' \in \mathcal{U}(r-q+b) \right \}.$$ 
For all $j=1,\ldots,(r-q)$, let $e'_{(q+j)}$ denote $(r-q)\times(r-q+b)$ matrix with only nonzero entry $1$ at $(j,j)^{th}$ place. Then $\{e_{q+1}',\ldots,e_{r}'\}$ forms a Jordan frame for $\Omega'.$ Furthermore,  
$\Omega'=\{k'(\sum_{i=q+1}^r t_i e_i'): k' \in \mathbb K', \;\; 0 \leq t_r \leq \ldots \leq t_{q+1} <1\}$ and 
\[e_{(q+j)}=
\begin{pmatrix}
    0_{q\times q} & 0_{q \times (r-q+b)}\\
    0_{(r-q)\times q} & e'_{(q+j)}
\end{pmatrix}
.\] Consequently, $w+\Omega_w =\{k(e_1+\ldots+e_q+\sum_{i=q+1}^r t_i e_i): 0 \leq t_r \leq \ldots \leq t_{q+1} <1, k \in \mathcal{I}\},$ where $$\mathcal{I}=\left\{k=\begin{pmatrix}
    \begin{pmatrix}
        I_q & 0\\
        0 & u'
    \end{pmatrix} & 0\\
    0 & \begin{pmatrix}
        I_q & 0\\
        0 & v'
    \end{pmatrix}
\end{pmatrix} \in \mathbb K: \begin{pmatrix}
    u' & 0\\
    0 & v'
\end{pmatrix} \in \mathbb K'\right \}.$$ 

Therefore, for a fixed integer $1 \leq q < r$, the $q^{th}$ boundary component $\partial\Omega_q$ of $\Omega$ is given by
\[\partial\Omega_q = \{k(e_1+\cdots+e_q+ \sum_{i=q+1}^r t_i e_i): k \in \mathbb K, 0 \leq t_r \leq \ldots \leq t_{q+1} < 1\}.\] 
The boundary component $\partial\Omega_q$ need not be closed. In fact, the closure of $\partial\Omega_q$ is given as
\beq \label{otherboundary} \overline{\partial\Omega_q} &=& \{k(e_1+\cdots+e_q+ \sum_{i=q+1}^r t_i e_i): k \in \mathbb K, 0 \leq t_r \leq \ldots \leq t_{q+1} \leq 1\}\\
&=& \partial \Omega_{q}\cup \ldots \cup \partial \Omega_r. \nonumber \eeq

Consider a map $\phi: \mathbb C^{r \times (r+b)} \to \mathbb C^{(r-q)\times (r-q+b)}$ defined by \[
\phi \left( \begin{pmatrix}
    u_{11} & u_{12} & v_1\\
    u_{21} & u_{22} & v_2
\end{pmatrix}_{r\times (r+b)}\right)
= \begin{pmatrix}
    u_{22} & v_2
\end{pmatrix}_{(r-q) \times (r-q+b)},  \]
where $u_{11} \in \mathbb C^{q \times q},  u_{12} \in \mathbb C^{q \times (r-q)}, u_{21} \in \mathbb C^{(r-q) \times q },  u_{22} \in \mathbb C^{(r-q) \times (r-q)},  v_1 \in \mathbb C^{q \times b}$ and $v_2 \in \mathbb C^{(r-q) \times b}.$

Each coordinate function of $\phi$ is a linear polynomial and $\phi(\sum_{i=1}^r t_i e_i)= \sum_{i=q+1}^r t_i e'_i$.
We now define a map $\Psi$ as follows:
For each $z=k(\sum_{i=1}^r t_i e_i) \in C^{r \times (r+b)}, $ $$ \Psi(z) :=(\phi \circ k^{-1})(z)= \sum_{i=q+1}^r t_i e'_i.$$ It is evident that $\Psi$ is well defined and coordinate functions of $\Psi$ are linear polynomials. 
In accordance with the Lemma \ref{Shilov}, which describes the Shilov boundary, below we provide a similar description for other boundary components of type-I domain.
\begin{theorem} \label{other-boundary}
    Let $\Omega$ be a type-I domain of rank $r$. Then for any $1 \leq q < r$, \[\overline{\partial\Omega_q}= \left \{z \in \mathbb C^{r \times (r+b)}: \Delta_{\Omega}^{(\ell)}(z,z)= \sum_{i \geq \rm{max}(\ell-(r-q),0)}^{\rm{min}(q,\ell)} \binom{q}{i}\Delta_{\Omega'}^{(\ell-i)}(\Psi(z),\Psi(z)),\;\; 1 \leq \ell \leq r\right \},\] where $\Psi$ is defined as above, $\Omega'$ is a type-I domain of rank $r-q$  and $(\ell-i) \in \vec{\mathbb N}^{(r-q)}$ is the signature of length $(r-q)$ with only first $\ell-i$ many ones.
\end{theorem}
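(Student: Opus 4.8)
The plan is to reduce the stated $\Delta$-condition to the purely numerical symmetric-polynomial identity of Lemma \ref{numericalequation}, applied to the \emph{squared} singular numbers, and then match the resulting constraint against the description of $\overline{\partial\Omega_q}$ recorded in \eqref{otherboundary}. First I would fix $z \in \mathbb{C}^{r \times (r+b)}$ and write its polar decomposition $z = k \cdot \sum_{j=1}^r t_j e_j$ with $t_1 \geq t_2 \geq \cdots \geq t_r \geq 0$. Exactly as in the computation carried out in the proof of Lemma \ref{Shilov} (which uses only the $\mathbb{K}$-invariance of $\Delta^{(\ell)}$ together with the factorization $\Delta(z,z) = \prod_{j=1}^r(1 - t_j^2)$), one obtains
\[\Delta_\Omega^{(\ell)}(z,z) = \sum_{1 \leq j_1 < \cdots < j_\ell \leq r} t_{j_1}^2 \cdots t_{j_\ell}^2 = \sigma_\ell(t_1^2, \ldots, t_r^2), \quad 1 \leq \ell \leq r.\]

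Next I would evaluate the right-hand side of the claimed identity. Since $\Psi(z) = \sum_{i=q+1}^r t_i e'_i$ by the definition of $\Psi$, and since $t_{q+1} \geq \cdots \geq t_r \geq 0$, this is already a polar decomposition of $\Psi(z)$ in the rank-$(r-q)$ domain $\Omega'$ relative to the Jordan frame $\{e'_{q+1}, \ldots, e'_r\}$. Applying the same diagonal formula for $\Omega'$ gives
\[\Delta_{\Omega'}^{(\ell-i)}(\Psi(z), \Psi(z)) = \sigma_{\ell-i}(t_{q+1}^2, \ldots, t_r^2).\]
Substituting both expressions, the defining condition of the set on the right of the theorem becomes precisely the system
\[\sigma_\ell(t_1^2, \ldots, t_r^2) = \sum_{i \geq \max(\ell-(r-q),0)}^{\min(q,\ell)} \binom{q}{i}\, \sigma_{\ell-i}(t_{q+1}^2, \ldots, t_r^2), \quad 1 \leq \ell \leq r,\]
which is exactly the hypothesis \eqref{sym-poly} of Lemma \ref{numericalequation}, now evaluated at the variables $t_j^2$.

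By Lemma \ref{numericalequation}, this system holds if and only if $t_1^2 = \cdots = t_q^2 = 1$, equivalently $t_1 = \cdots = t_q = 1$ (using $t_j \geq 0$). Because of the ordering $t_1 \geq \cdots \geq t_r$, this condition automatically forces $t_{q+1} \leq t_q = 1$, so $z = k\big(e_1 + \cdots + e_q + \sum_{i=q+1}^r t_i e_i\big)$ with $0 \leq t_r \leq \cdots \leq t_{q+1} \leq 1$. Comparing with \eqref{otherboundary}, this is exactly membership in $\overline{\partial\Omega_q}$. Since every step in the chain is an equivalence, the claimed set equality follows.

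I expect the conceptual core of the argument, namely the generating-function identity of Lemma \ref{numericalequation}, to be already in hand, so the remaining obstacles are essentially bookkeeping. The two points to verify carefully are: that $\Psi(z) = \sum_{i=q+1}^r t_i e'_i$ is a legitimate polar decomposition in $\Omega'$ (so the diagonal formula transfers verbatim to the shifted frame), and that the index range $\max(\ell-(r-q),0) \leq i \leq \min(q,\ell)$ produces only valid signatures $(\ell - i)$ of length $r-q$ for $\Omega'$; the latter holds since $0 \leq \ell - i \leq \min(r-q,\ell) \leq r-q$.
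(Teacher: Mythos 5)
Your proposal is correct and follows essentially the same route as the paper: reduce both sides to elementary symmetric polynomials in the squared singular numbers via the $\mathbb K$-invariance computation from Lemma \ref{Shilov}, identify $\Delta_{\Omega'}^{(\ell-i)}(\Psi(z),\Psi(z))$ with $\sigma_{\ell-i}(t_{q+1}^2,\ldots,t_r^2)$, and invoke Lemma \ref{numericalequation} together with the description \eqref{otherboundary} of $\overline{\partial\Omega_q}$. The only cosmetic difference is that you phrase the argument as a single chain of equivalences while the paper treats the two inclusions separately.
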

\begin{proof} 
 For any $z=k(\sum_{i=1}^r t_i e_i) \in \mathbb C^{r\times (r+b)}$ and $1 \leq \ell \leq r$, by \cite[Lemma 3.2]{FK}, we have \beqn \Delta_\Omega^{(\ell)}(z,z)&=& \Delta_\Omega^{(\ell)} \left(k(\sum_{i=1}^r t_i e_i),k(\sum_{i=1}^r t_i e_i)\right)= \Delta_\Omega^{(\ell)}(\sum_{i=1}^r t_i e_i, \sum_{i=1}^r t_i e_i)\\
&=&\Delta_{\Omega}^{(\ell)}(\sum_{i=1}^r t_i^2 e_i, e)= \sigma_{\ell}(t^2)\eeqn
Let $z \in \overline{\partial\Omega_q},$ then it follows from the equation \eqref{otherboundary} that
 \beqn
 \Delta_\Omega^{(\ell)}(z,z) &=& \sigma_{\ell}(1,\ldots,1,t_{q+1}^2,\ldots,t_{r}^2). \eeqn
Thus, by Lemma \ref{numericalequation}, it follows that
\beqn
 \Delta_\Omega^{(\ell)}(z,z) &=& \sum_{i\geq \rm{max}(\ell-(r-q),0)}^{\rm{min}(q,\ell)} \binom{q}{i}\sigma_{\ell-i}(t_{q+1}^2,\ldots,t_r^2)\\
 &=& \sum_{i\geq \rm{max}(\ell-(r-q),0)}^{\rm{min}(q,\ell)} \binom{q}{i} \Delta_{\Omega'}^{(\ell-i)} \left(\sum_{i=q+1}^r t_i e_i', \sum_{i=q+1}^r t_i e_i'\right)\\
 &=& \sum_{i\geq \rm{max}(\ell-(r-q),0)}^{\rm{min}(q,\ell)} \binom{q}{i} \Delta_{\Omega'}^{(\ell-i)} (\Psi(z), \Psi(z)).
\eeqn

Conversely, assume that $z=k(\sum_{i=1}^r t_i e_i) \in \mathbb C^{r \times (r+b)}$ with $0 \leq t_r \leq \ldots \leq t_1$ such that \[\Delta_{\Omega}^{(\ell)}(z,z)= \sum_{i \geq \rm{max}(\ell-(r-q),0)}^{\rm{min}(q,\ell)} \binom{q}{i}\Delta_{\Omega'}^{(\ell-i)}(\Psi(z),\Psi(z)),\;\; \text{ for all } 1 \leq \ell \leq r.\]
Equivalently, \[\sigma_\ell(t_1^2,\ldots,t_r^2)= \sum_{i\geq \rm{max}(\ell-(r-q),0)}^{\rm{min}(q,\ell)} \binom{q}{i}\sigma_{\ell-i}(t_{q+1}^2,\ldots,t_r^2), \;\; \text{ for all } 1 \leq \ell \leq r.\] By Lemma \ref{numericalequation}, we have $t_1=\cdots=t_q=1,$ which shows that $z \in \overline{\partial \Omega_q}.$
\end{proof}
A potential notion of isometries associated with each boundary component, as established for the Shilov boundary in Theorem \ref{characterization}, can be considered. We plan to investigate this further in our future work.


\end{document}